\numberwithin{equation}{section}
\newlist{assumption}{enumerate}{1}
\setlist[assumption]{label=(\textsc{a}\arabic*)}
\crefname{assumptioni}{Assumption}{Assumptions}
\newcommand{\R}{\mathbb{R}}
\newcommand{\N}{\mathbb{N}}
\newcommand{\1}{\mathbb{1}}
\newcommand{\Linop}{\mathbb{L}}
\newcommand{\card}{\text{card}}
\let\div\relax
\DeclareMathOperator{\div}{\mathrm{div}}
\newcommand{\dx}{\,\mathrm{d}x}
\newcommand{\ds}{\,\mathrm{d}s}
\newcommand{\barK}{K}
\newcommand{\norm}[1]{\|#1\|}
\def\weakto{\rightharpoonup}
\renewcommand{\epsilon}{\varepsilon}
\def\thetitle{No-gap second-order optimality conditions for optimal control of a non-smooth quasilinear elliptic equation}
\date{2020-09-09}
\title{\thetitle}
\author{Christian Clason\thanks{Faculty of Mathematics, University of Duisburg-Essen, Thea-Leymann-Strasse 9, 45127 Essen, Germany (\email{christian.clason@uni-due.de}, \orcid{0000-0002-9948-842})}
    \and Vu Huu Nhu\thanks{Faculty of Mathematics, University of Duisburg-Essen, Thea-Leymann-Strasse 9, 45127 Essen, Germany; \emph{currently} Department of Scientific Fundamentals, Posts and Telecommunications Institute of Technology, Hanoi, Vietnam (\email{huu.vu@uni-due.de}, \orcid{0000-0003-4279-3937})}
    \and Arnd Rösch\thanks{Faculty of Mathematics, University of Duisburg-Essen, Thea-Leymann-Strasse 9, 45127 Essen, Germany (\email{arnd.roesch@uni-due.de})}
}
\begin{document}
\maketitle

\begin{abstract}
    This paper deals with second-order optimality conditions for a quasilinear elliptic control problem with a nonlinear coefficient in the principal part that is finitely $PC^2$ (continuous and $C^2$ apart from finitely many points). We prove that the control-to-state operator is continuously differentiable even though the nonlinear coefficient is non-smooth. This enables us to establish ``no-gap'' second-order necessary and sufficient optimality conditions in terms of an abstract curvature functional, i.e., for which the sufficient condition only differs from the necessary one in the fact that the inequality is strict. A condition that is equivalent to the second-order sufficient optimality condition and could be useful for error estimates in, e.g., finite element discretizations is also provided.
\end{abstract}

\section{Introduction}
This work is concerned with the quasilinear elliptic optimal control problem
\begin{equation}
    \label{eq:P}
    \tag{P}
    \left\{
        \begin{aligned}
            \min_{u\in L^\infty(\Omega), y\in H^1_0(\Omega)} &J(y,u) := G(y) + \frac{\nu}{2} \norm{u}_{L^2(\Omega)}^2 \\
            \text{s.t.} \quad &-\div [(b + a(y)) \nabla y] = u \, \text{ in } \Omega, \quad y = 0 \, \text{ on } \partial\Omega, \\
            & \alpha(x) \leq u(x) \leq \beta(x) \quad \text{a.e. } x \in \Omega,
        \end{aligned}
    \right.
\end{equation}
with a $C^2$-functional $G:H^1_0(\Omega) \to\R$ for a bounded convex domain $\Omega\subset \R^N$, $N \in \{2,3\}$, a Lipschitz continuous function $b: \overline \Omega \to \R$, a continuous and piecewise twice differentiable function $a: \R \to \R$, functions $\alpha, \beta \in L^\infty(\Omega)$ satisfying $\beta(x) - \alpha(x) \geq \gamma$ for some $\gamma>0$ and almost every $x \in \Omega$, and a positive constant $\nu$. For the precise assumptions on the data of \eqref{eq:P}, we refer to \cref{sec:assumption}.

The state equation in the optimal control problem \eqref{eq:P} arises, for instance, in models of heat conduction with a nonlinear dependence on the temperature $y$ that allows for different behavior in different temperature regimes with sharp phase transitions. Such situations occur, e.g., in the context of steel production, where the thermal conductivity does not change spatially but rather depends on the temperature;  cf.~\cite{Binder1995,Engl1988,Kugler2003}. When the conductivity coefficient is of class $C^2$ in the state variable $y$, second-order necessary and sufficient conditions for such optimal control problems were already obtained in \cite{CasasTroltzsch2009,CasasDhamo2011-2ndOSs}. However, if the coefficient is non-smooth, the standard tools for smooth problems are typically inapplicable, making the analytical and numerical treatment challenging.
The goal of this paper is therefore to derive second-order necessary and sufficient optimality conditions for \eqref{eq:P}. 
Specifically, we introduce a curvature functional in terms of the jumps of the first-order derivatives of $a$ in critical points (see \cref{sec:2nd-OC:estimates}).
Using a second-order Taylor-type expansion and estimates of the functional in terms of the jumps of $a'$ in the optimal state, we derive second-order necessary as well as sufficient conditions that are ``no-gap'' conditions in the sense that the only difference between necessary and sufficient conditions are in the fact that the inequality in the latter are strict (see \cref{thm:2nd-OS-nec,thm:2nd-OS-suf}). 
In addition, we derive an equivalent formulation of the second-order sufficient condition useful for proving error estimates for finite element discretizations of \eqref{eq:P}, which will be the focus of a follow-up work.

\bigskip

Let us comment on related work.
As far as second-order sufficient optimality conditions (SSC) are concerned, there is a rich literature on SSC for smooth PDE constrained optimal control problems; see, e.g., the articles \cite{CasasMateos2002,CasasDhamo2011-2ndOSs,Casas2008,KrumbiegelNeitzelRosch2010,RoschTroltzsch2006,RoschTroltzsch2003}, the seminal book \cite{Troltzsch2010}, the survey \cite{CasasTroltzsch:2015}, as well as the references therein. 

Regarding second-order necessary optimality conditions (SNC) for optimization problems, it is well-known that the second-order derivative of a Lagrangian function (or of the reduced cost functional) is, in general, not less than a so-called ``sigma-term'' \cite[Chap.~3]{BonnansShapiro2000}. This term is defined as the value of a support functional of a second-order tangent set and contributes prominently in the gap between SNC and SSC. If, in addition, the optimization problem satisfies the \emph{polyhedricity} condition, then the ``sigma-term'' and hence the gap vanishes; see, e.g., \cite[Prop.~3.53]{BonnansShapiro2000}. For related works for smooth semilinear PDE-constrained problems, we refer to \cite{BayenBonnansSilva2014,KienNhuSon2017,BonnansZidani1999,BayenSilva2016,Bonnans1998} as well as the references therein for $C^2$ coefficients, while \cite{Tuan2019} treats the case where the nonlinearities are of class $C^1$, but not $C^2$, and second-order sequentially directionally differentiable.
In particular, \cite{Bonnans1998} is to the best of our knowledge the first work deriving no-gap second order conditions for optimal control of PDEs with polyhedric constraint sets.
Another approach to deal with SNC for problems governed by smooth quasilinear elliptic equations was followed in \cite{CasasTroltzsch2009,CasasDhamo2011-2ndOSs}. There, the non-negativity of the second-order derivatives of auxiliary real functions at minimum points (see \cite[pp.~710]{CasasTroltzsch2009}) was employed to derive SNC that have a minimal gap in comparison with the corresponding SSC \cite[Rem.~5.3.1]{CasasTroltzsch2009}. Interestingly, an inspection shows that the problems considered in \cite{CasasTroltzsch2009,CasasDhamo2011-2ndOSs} fulfill the polyhedricity condition. 

However, there are comparatively few contributions on SSC for optimal control problems governed by non-smooth PDEs, and even less on SNC for such problems.
In the literature, a common approach pursued in, e.g., \cite{KunischWachsmuth2012,Livia2019,ChristofWachsmuth2019} is to exploit a strong stationarity condition to obtain a second-order Taylor-type expansion of the mapping $u \mapsto J(S(u),u)$, where $S$ is the control-to-state operator. In these papers, SSC are derived using an additional sign assumption on the Lagrange multipliers in the vicinity of the contact set that ensures a so-called ''safety distance'' \cite[Rem.~4.13]{BetzMeyer2015}. 
In contrast, here we can use that the gradient term $\nabla \bar y$ occurring in the Taylor-type expansion \eqref{eq:2nd-Taylor-expansion} vanishes on the ``active set'' where the coefficient is non-differentiable (due to the finiteness assumption on the set of non-differentiability points of $a$). The benefit of following this approach is that we do not need any sign assumption on the multipliers. 
A related approach of deriving no-gap second order-conditions for non-smooth problems in terms of a generalized curvature functional was introduced in \cite{ChristofWachsmuth2018,ChristofWachsmuth2019}.

Finally, we mention that a generalized version of problem \eqref{eq:P} was studied in \cite{ClasonNhuRosch} to derive the Clarke, Bouligand, and strong stationarity conditions for the case where the coefficient $a$ is merely directionally differentiable and locally Lipschitz continuous. In \cite{ClasonNhuRosch}, we proved the equivalence of Clarke and strong stationarity conditions when the function $a$ is countably $PC^1$ (continuous and $C^1$ apart from countably many points). More interestingly, as we will see later, under the assumption that $a$ is finitely $PC^1$ the control-to-state operator is, indeed, first-order continuously differentiable because of the finiteness assumption and the occurrence of the divergence term in the linearized equation, cf. \cref{thm:diff} below.

\bigskip

The paper is organized as follows. \Cref{sec:assumption} is devoted to notation and the main assumptions of \eqref{eq:P}. In \cref{sec:quasilinear}, we provide some required properties of the state equation, where we present some results on the existence, uniqueness, and regularity of solutions and prove first-order differentiability of the control-to-state operator. \Cref{sec:existence-1stOS} is concerned with the existence of minimizers as well as first-order necessary optimality conditions of \eqref{eq:P}. The main results of the paper, the no-gap second-order necessary and sufficient condititions, are derived in \cref{sec:2nd-OC}. Finally, the paper ends with appendices showing an a priori estimate for \eqref{eq:P} on convex domains and verifying a central assumption on a jump functional for a one-dimensional example.

\section{Notation and main assumptions}\label{sec:assumption}

\paragraph*{Notation.}
For a given point $u\in X$ and $\rho>0$, we denote by $B_X(u,\rho)$ and $\overline B_X(u,\rho)$ the open and closed balls, respectively, of radius $\rho$ centered at $u$.
For Banach spaces $X$ and $Y$, the notation $X \hookrightarrow Y$ means that $X$ is continuously embedded in $Y$, and $X \Subset Y$ means that $X$ is compactly embedded in $Y$. For a Banach space $X$ with dual $X^*$, the symbol $\langle\cdot, \cdot \rangle_{X^*, X}$ denotes the duality pairing between $X$ and $X^*$.
For a function $f:\Omega\to \R$ defined on a domain $\Omega\subset \R^N$ and a subset $A \subset \R$, we denote by $\{ f \in A \}$ the set of all points $x \in \Omega$ for which $f(x) \in A$. Similarly, for functions $f_1,f_2$ and subsets $A_1, A_2 \subset \R$, the symbol $\{ f_1 \in A_1, f_2 \in A_2 \}$ denotes the set of all points at which the values of $f_1$ and $f_2$ belonging to $A_1$ and $A_2$, respectively. For any set $\omega \subset \Omega$, the symbol $\1_{\omega}$ stands for the indicator function of $\omega$, i.e., $\1_\omega(x) = 1$ if $x \in \omega$ and $\1_\omega(x) =0$ otherwise. Finally, $C$ denotes a generic positive constant, which may be different at different places of occurrence. We also write, e.g., $C_\xi$ for a constant depending only on the parameter $\xi$.

\medskip

We recall the following definition from, e.g., \cite[Chap.~4]{Scholtes} or \cite[Def.~2.19]{Ulbrich2011}. 
For an open subset $O$ in $\R$, we say that a continuous function $f: O \to \R$ is a \emph{$PC^k$-function}, $1 \leq k \leq \infty$, if for each point $t_0 \in O$ there exist a neighborhood $O_{t_0} \subset O$ and a finite set of $C^k$-functions $f_i: O_{t_0} \to \R$, $i=1,2,\dots,m$, such that
\begin{equation*}
    f(t) \in \left\{f_1(t), f_2(t),\dots,f_m(t)\right\} \quad \text{for all} \quad t \in O_{t_0}.
\end{equation*}
This implies in particular that $f$ is locally Lipschitz continuous; see, e.g., \cite[Cor.~4.1.1]{Scholtes}.
For a $PC^k$-function $f: O \to \R$, $1 \leq k \leq \infty$, we can thus define the exceptional set
\begin{equation*}
    E_{f} := \left\{ t \in O \,\middle|\, f \ \text{is not differentiable at } t \right\},
\end{equation*}
which by Rademacher's Theorem has Lebesgue measure zero.
We shall say that a $PC^k$-function $f$ is \emph{finitely (countably) $PC^k$} if the set $E_f$ is finite (countable); see e.g., \cite{ClasonNhuRosch}. 

\begin{example}
    The functions $\R \ni t \mapsto |t| \in \R$, $\R \ni t \mapsto \max\{0, t\} \in \R$, and $\R \ni t \mapsto \min\{0, t\} \in \R$ are finitely $PC^{\infty}$.
\end{example}

Let $f$ be a finitely $PC^1$-function on $\R$ such that the set $E_f$ is given as
\begin{equation*}
    E_f = \{t_1,t_2,\ldots, t_{\barK} \} \quad \text{with} \, -\infty < t_1 <t_2 < \cdots < t_{\barK} < \infty \, \text{and }  \barK \in \N.
\end{equation*}
For convenience, set $t_{0} := -\infty$ and $t_{\barK+1} := \infty$.
By the decomposition theorem for piecewise smooth functions \cite[Prop.~2D.7]{Dontchev-Rockafellar2014}, $f$ can then be expressed as 
\begin{equation} 
    \label{eq:PC1-rep}
    f(t) = \sum_{i =0}^{\barK} \1_{(t_{i},t_{i+1}]}(t)f_{i}(t)  \quad \text{for all } t \in \R,
\end{equation}
where $f_i$, $0 \leq i \leq \barK$, are $C^1$-functions on $\R$ such that
\begin{equation} \label{eq:PC1-continuous-cond}
    f_{i-1}(t_i) = f_{i}(t_i) \quad \text {for all } 1 \leq i \leq \barK.
\end{equation}
Here and in what follows, we use the convention $(t, +\infty] := (t, \infty)$. 
For any $i \in \{1,2,\ldots, \barK\}$, we set
\begin{equation}
    \label{eq:sigma-i}
    \sigma_i := |f'_{i-1}(t_i) - f'_{i}(t_i)|.
\end{equation}
Minding \eqref{eq:PC1-rep}, this term measures the jump of the derivative of $f$ in the singular point $t_i$ and will play an important part in the second-order optimality conditions for \eqref{eq:P}.

It is easy to see that if $f$ is a finitely $PC^1$-function defined by \eqref{eq:PC1-rep}, then it is directionally differentiable and its directional derivative is given by
\begin{equation*}
    f'(t; h) = \sum_{i=0}^{\barK} \left\{  \1_{(t_{i},t_{i+1})}(t)f_i'(t)h + \1_{\{t_{i+1}\}}(t) \left[ \1_{(0, \infty)}(h) f_{i+1}'(t_{i+1})h + \1_{(-\infty, 0)}(h) f_{i}'(t_{i+1})h\right] \right\},
\end{equation*}
where we use the convention $\1_{\{t_{\barK +1} \}} = \1_{\{ \infty \}} = 0$. 

Throughout the paper, we need the following assumptions.
\begin{assumption}
\item \label{ass:domain}
    $\Omega \subset \R^N$, $N \in \{2,3\}$, is an open, convex, and bounded domain. 
\item \label{ass:b_func}
    The function $b: \overline \Omega \to \R$ is Lipschitz continuous with Lipschitz constant $L_{b} >0$ and satisfies
    \begin{equation*}
        b(x) \geq \underline{b} > 0
    \end{equation*} 
    for all $x \in \overline \Omega$ and for some constant $\underline{b}$.
\item \label{ass:PC1-func}
    $a: \R \to \R$ is a non-negative finitely $PC^2$ and is defined by \eqref{eq:PC1-rep} with $C^2$ non-negative functions $a_i$ satisfying \eqref{eq:PC1-continuous-cond} and numbers $t_i \in \R$, $i \in \{1,2,\ldots, \barK\}$, $t_{0} := - \infty$, $t_{\barK+1} := + \infty$. 

\item \label{ass:cost_func}
    The functional $G: H^1_0(\Omega) \to \R$ is of class $C^2$.
\end{assumption}

For any $y\in C(\overline\Omega)$, we then define 
\begin{equation}\label{eq:index-set-reduced}
    I_y  := \left\{ i \in \N \,\middle|\, \exists x \in \overline\Omega \, \text{such that } y(x) \in (t_i, t_{i+1}]  \right\}.
\end{equation}
Obviously, we then have
\begin{equation*}
    a(y(x)) = \sum_{i \in I_y} \1_{(t_i,t_{i+1}]}(y(x))a_i(y(x)) \quad \text{for all } x \in \overline\Omega.
\end{equation*}
Furthermore, since the $a_i$ are $C^2$ and therefore Lipschitz continuous on bounded sets, $a$ is also Lipschitz continuous on bounded sets (where by the assumption only finitely many selections $a_i$ can be attained).

\section{Control-to-state operator} \label{sec:quasilinear}

In this section, we shall derive the required results for the state equation
\begin{equation} \label{eq:state}
    \left\{
        \begin{aligned}
            -\div [(b + a(y) )\nabla y ]& = u && \text{in } \Omega, \\
            y &=0 && \text{on } \partial\Omega.
        \end{aligned}
    \right.
\end{equation}

\subsection{Existence, uniqueness, and regularity of solutions to the state equation}

We first address the existence, uniqueness, and regularity of solutions to \eqref{eq:state}.
\begin{theorem}[cf. {\cite[Thm.~2.4 and Thm.~2.5]{CasasTroltzsch2009}}]  \label{thm:existence}
    Let $p, q > N$ be arbitrary. Let \crefrange{ass:domain}{ass:PC1-func} hold. Then, for any $u \in W^{-1,p}(\Omega)$, there exists a unique solution $y_u \in W^{1,p}_0(\Omega)$ to \eqref{eq:state}. Moreover, for any bounded set $U \in W^{-1,p}(\Omega)$, a constant $C_U$ exists such that
    \begin{equation} \label{eq:apriori}
        \|y_u\|_{W^{1,p}_0(\Omega)}  \leq C_U \quad \text{for all } u \in U.
    \end{equation}
    Moreover, if $U$ is a bounded set in $L^q(\Omega)$, then, for any $u \in U$, there holds that $y_u \in H^2(\Omega) \cap W^{1,\infty}(\Omega)$ and 
    \begin{equation} \label{eq:apriori_2}
        \|y_u\|_{H^2(\Omega)}  + \|y_u\|_{W^{1,\infty}(\Omega)}  \leq C_{U}.
    \end{equation}
\end{theorem}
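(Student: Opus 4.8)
The plan combines a fixed-point argument for existence with linear elliptic regularity theory, exploiting two structural features of \eqref{eq:state}: since $a\ge0$, the principal coefficient obeys $b+a(y)\ge\underline b>0$, so that Stampacchia's truncation method produces an $L^\infty$-bound depending \emph{only} on this lower ellipticity bound; and once $y$ is bounded, $b+a(y)$ is a \emph{continuous} (indeed Hölder-continuous) coefficient, which makes the De~Giorgi--Nash--Moser and Calderón--Zygmund estimates for linear divergence-form equations applicable. \emph{Existence and the $L^\infty$-bound.} For $w\in C(\overline\Omega)$ the coefficient $b+a(w)$ is continuous and bounded below by $\underline b$, so Lax--Milgram yields a unique $S(w)\in H^1_0(\Omega)$ with $-\div[(b+a(w))\nabla S(w)]=u$; since $u\in W^{-1,p}(\Omega)$ with $p>N$, De~Giorgi--Nash--Moser gives $S(w)\in C^{0,\kappa}(\overline\Omega)$ for some $\kappa\in(0,1)$, with a bound that is uniform on bounded subsets of $C(\overline\Omega)$. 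Hence $S\colon C(\overline\Omega)\to C(\overline\Omega)$ is well defined, compact (Arzelà--Ascoli) and continuous (pass to the limit in the weak formulation along $w_n\to w$ in $C(\overline\Omega)$, using that $a(w_n)\to a(w)$ uniformly, $\nabla S(w_n)\weakto\nabla S(w)$ in $L^2(\Omega)$, and the uniqueness of the linear problem). To invoke the Leray--Schauder fixed-point theorem it remains to bound $\{w : w=\lambda S(w),\ \lambda\in[0,1]\}$; but such $w$ solves $-\div[(b+a(w))\nabla w]=\lambda u$, and Stampacchia's argument yields $\norm{w}_{L^\infty(\Omega)}\le C\norm{u}_{W^{-1,p}(\Omega)}$ with $C=C(N,p,|\Omega|,\underline b)$. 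A resulting fixed point $y_u$ solves \eqref{eq:state}, and since the same estimate applies to \emph{every} $H^1_0(\Omega)$-solution, $\norm{y_u}_{L^\infty(\Omega)}\le C\norm{u}_{W^{-1,p}(\Omega)}$ holds unconditionally.

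\emph{The estimate \eqref{eq:apriori}.} Setting $M:=\norm{y_u}_{L^\infty(\Omega)}$, one has $\underline b\le b+a(y_u)\le\norm{b}_{C(\overline\Omega)}+\max_{[-M,M]}a=:\Lambda$, so by De~Giorgi--Nash--Moser $y_u\in C^{0,\kappa}(\overline\Omega)$ with norm controlled by $M$; as $b$ is Lipschitz and $a$ locally Lipschitz, the coefficient $A:=b+a(y_u)$ then lies in $C^{0,\kappa}(\overline\Omega)$ with norm controlled by $M$. Since $A$ is uniformly continuous and coercive on the bounded convex (hence Lipschitz) domain $\Omega$, the Calderón--Zygmund estimate for divergence-form equations gives $y_u\in W^{1,p}_0(\Omega)$ with $\norm{y_u}_{W^{1,p}_0(\Omega)}\le C\norm{u}_{W^{-1,p}(\Omega)}$, the constant depending only on $N$, $p$, $\underline b$ and $M$, hence on a bound for $\norm{u}_{W^{-1,p}(\Omega)}$; together with the previous step this establishes existence in $W^{1,p}_0(\Omega)$ and \eqref{eq:apriori}.

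\emph{Uniqueness and the $H^2\cap W^{1,\infty}$-regularity.} If $y_1,y_2\in W^{1,p}_0(\Omega)$ both solve, both are bounded by some $M$, and $w:=y_1-y_2$ satisfies the \emph{linear} equation $-\div[(b+a(y_1))\nabla w]=\div[(a(y_1)-a(y_2))\nabla y_2]$. Testing with $w$, using $|a(y_1)-a(y_2)|\le L_M|w|$ (with $L_M$ the Lipschitz constant of $a$ on $[-M,M]$), Hölder's inequality with exponents $p$, $\tfrac{2p}{p-2}$, $2$, and the Sobolev embedding $H^1_0(\Omega)\hookrightarrow L^{2p/(p-2)}(\Omega)$ (valid since $p>N$), one obtains $\underline b\norm{\nabla w}_{L^2(\Omega)}^2\le C\,L_M\norm{\nabla y_2}_{L^p(\Omega)}\norm{\nabla w}_{L^2(\Omega)}^2$; as $\nabla y_2\in L^p(\Omega)$ with $p>N$ \emph{strictly}, $\norm{\nabla y_2}_{L^p(\omega)}\to0$ when $|\omega|\to0$, and localising this estimate on small balls and summing over a covering of $\Omega$ forces $w\equiv0$ (see \cite{CasasTroltzsch2009}). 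For the last assertion, a set $U\subset L^q(\Omega)$ with $q>N$ bounded is bounded in $W^{-1,p}(\Omega)$ for \emph{every} $p<\infty$, so by the previous steps $y_u\in W^{1,p}_0(\Omega)$ for all $p<\infty$ with uniformly bounded norms; in particular $\nabla b$ and $a'(y_u)\nabla y_u$ lie in $L^m(\Omega)$ for all $m<\infty$ (here $a'(y_u)\nabla y_u$ is well defined a.e. with $|a'(y_u)\nabla y_u|\le L_M|\nabla y_u|$, using that $\nabla y_u=0$ a.e. on $\{y_u\in E_a\}$). Hence $y_u$ solves $-\Delta y_u=g$ in $\Omega$ with $g:=\big(u+\nabla(b+a(y_u))\cdot\nabla y_u\big)/(b+a(y_u))\in L^q(\Omega)$, and the elliptic regularity estimate for the Poisson problem on a bounded \emph{convex} domain (see the appendix) yields $y_u\in H^2(\Omega)\cap W^{1,\infty}(\Omega)$ together with \eqref{eq:apriori_2}, with a constant depending only on $N$, $q$, $\underline b$, $\Omega$ and a bound for $\norm{u}_{L^q(\Omega)}$.

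The genuinely delicate point is the uniqueness argument: at face value the energy estimate only yields uniqueness under a smallness condition on the data, and turning the ``$\norm{\nabla y_2}_{L^p(\omega)}$ small on small sets'' observation into an unconditional argument --- via a Caccioppoli-type localisation together with the connectedness of $\Omega$ and the homogeneous boundary condition --- is where the higher integrability $p>N$ is used essentially. A secondary technical point is the $W^{1,\infty}$-bound \emph{up to the boundary} for $-\Delta y_u=g$ with $g\in L^q$, $q>N$, on a domain that is only convex (so Lipschitz but not $C^{1,1}$); this is precisely where convexity of $\Omega$, rather than smoothness of $\partial\Omega$, is invoked.
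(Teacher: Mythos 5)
Your overall architecture is reasonable, but the central step --- the global $W^{1,p}$ estimate \eqref{eq:apriori} --- rests on a citation that does not apply as stated, and this is precisely the difficulty the paper's proof is designed to circumvent. You freeze the coefficient $A := b + a(y_u)$, observe that it is Hölder continuous and coercive, and then invoke ``the Calderón--Zygmund estimate for divergence-form equations'' on the ``bounded convex (hence Lipschitz) domain'' to conclude $\norm{y_u}_{W^{1,p}_0(\Omega)} \leq C\norm{u}_{W^{-1,p}(\Omega)}$ for every $p>N$. The standard global $W^{1,p}$ theory for divergence-form operators with continuous (or small-BMO) coefficients requires the boundary to be $C^1$ or Reifenberg flat with small constant; on a domain that is merely Lipschitz it fails for large $p$ (already for the Laplacian the Jerison--Kenig range in dimension three is only $3/2-\epsilon<p<3+\epsilon$), and a convex domain is Lipschitz but need not be Reifenberg flat with small constant. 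Convexity does rescue the full range $1<p<\infty$, but through specific convex-domain results such as \cite{Fromm1993}, which is stated for the \emph{Laplacian}. The paper's proof uses the Kirchhoff transformation $\theta_u(x)=K(x,y_u(x))$ with $K(x,t)=b(x)t+\int_0^t a(s)\ds$ exactly to reduce the quasilinear equation to the Poisson problem $-\Delta\theta_u = u - \div(\nabla b\, y_u)$, to which \cite{Fromm1993} applies directly on the convex domain, and then recovers $\nabla y_u$ from $\nabla\theta_u$ by inverting $K(x,\cdot)$ (using $K_t\geq\underline b$). To make your route rigorous you would have to supply a variable-coefficient $W^{1,p}$ estimate valid on convex domains (e.g., by a freezing/perturbation argument built on top of \cite{Fromm1993}), or simply switch to the Kirchhoff transform, which performs the coefficient-freezing globally and for free.

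Two smaller remarks. First, your uniqueness argument is, as you concede, only a sketch: the energy estimate yields uniqueness only under a smallness condition, and the proposed localisation does not obviously close (on a small ball the difference $w$ no longer vanishes on the boundary of the ball, so the Sobolev--Poincaré constant does not improve, and one is left with a Caccioppoli inequality rather than $w\equiv 0$). The paper does not attempt this either --- it cites \cite[Thm.~2.2]{CasasTroltzsch2009} outright for existence and uniqueness in $H^1_0(\Omega)\cap C(\overline\Omega)$ --- so deferring to the same reference is acceptable, but the sketch should not be mistaken for a proof. Second, your treatment of the $H^2\cap W^{1,\infty}$ regularity essentially coincides with the paper's \cref{lem:regularity-convex}: rewrite the equation as $-\Delta y_u = g$ with $g\in L^q(\Omega)$, $q>N$ (using that $\nabla y_u=0$ a.e.\ on $\{y_u\in E_a\}$ and that $\nabla y_u\in L^{2q}(\Omega)$), and apply the convex-domain $H^2$ estimate together with the global gradient bound of \cite{CianchiMazya2015}; this part is fine.
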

\begin{proof}
    Let $p >N$, $U$ be a bounded set in $W^{-1,p}(\Omega)$, and $u \in U$ be arbitrary. By \cite[Thm.~2.2]{CasasTroltzsch2009}, \eqref{eq:state} has a unique solution $y_u \in H^1_0(\Omega) \cap C(\overline\Omega)$, and there exists a constant $C_{1,U}$ such that
    \begin{equation}
        \label{eq:apriori-auxi1}
        \|y_u\|_{H^{1}_0(\Omega)} + \|y_u\|_{C(\overline\Omega)} \leq C_{1,U} \quad \text{for all } u \in U.
    \end{equation}
    We now use the Kirchhoff transformation (see \cite[Chap.~V]{Visintin})
    \begin{equation}
        \label{eq:Kirchoff-trans}
        K(x,t) := b(x)t + \int_0^t a(s) \ds.
    \end{equation}
    By setting $\theta_u(x) := K(x, y_u(x))$ for $x \in \overline\Omega$, \eqref{eq:state} can be rewritten as follows
    \begin{equation} \label{eq:state-Kirchhoff}
        \left\{
            \begin{aligned}
                -\Delta \theta_u &= u - \div (\nabla b y_u) && \text{in } \Omega, \\
                \theta_u &=0 && \text{on } \partial\Omega.
            \end{aligned}
        \right.
    \end{equation}
    Applying the maximal elliptic regularity for Poisson's equation on the convex domain  (see, e.g., \cite[Cor.~1]{Fromm1993}) to \eqref{eq:state-Kirchhoff} yields that
    \begin{equation*}
        \|\theta_u\|_{W^{1,p}_0(\Omega)}  \leq C_{\Omega,p,N} \norm{u - \div (\nabla b y_u)}_{W^{-1,p}(\Omega)}.
    \end{equation*}
    This, together with \eqref{eq:apriori-auxi1} and the global Lipschitz continuity of $b$, gives
    \begin{equation}
        \label{eq:apriori-auxi2}
        \|\theta_u\|_{W^{1,p}_0(\Omega)}  \leq C_{2,U}  \quad \text{for all } u \in U
    \end{equation}
    and for some constant $C_{2,U}$. Moreover, for any fixed $x \in \Omega$, $K(x,\cdot)$ is monotonically increasing due to \cref{ass:b_func,ass:PC1-func}. It then has an inverse denoted by $T(x, \cdot)$. By a simple computation, we have for all $1 \leq i \leq N$ that
    \begin{equation*}
        \left|\frac{\partial y_u}{\partial x_i} \right| = \left| \frac{\partial T}{\partial x_i} + \frac{\partial T}{\partial s} \frac{\partial \theta_u}{\partial x_i}   \right|  \leq \frac{1}{\underline{b}}\left[ \left|\frac{\partial b}{\partial x_i} \right| |y_u| + \left|\frac{\partial \theta_u}{\partial x_i} \right| \right].
    \end{equation*}
    From this, \eqref{eq:apriori-auxi2}, and \eqref{eq:apriori-auxi1}, we derive \eqref{eq:apriori}. 

    It remains to prove \eqref{eq:apriori_2}. To this end, let $U$ be a bounded set in $L^q(\Omega)$ with $q >N$ and take $u \in U$ arbitrary but fixed. Since $q > N$, we have the continuous embedding $L^q(\Omega) \hookrightarrow W^{-1,2q}(\Omega)$. This gives $y_u \in W^{1,2q}(\Omega)$. We thus have the $H^2$- and $W^{1,\infty}$-regularity of $y_u$ as well as \eqref{eq:apriori_2} according to \eqref{eq:apriori-auxi1} and \cref{lem:regularity-convex}.
\end{proof}

From now on, for each $u \in W^{-1,p}(\Omega)$, $p >N$, we denote by $y_u$ the unique solution to \eqref{eq:state}. The control-to-state operator $ W^{-1,p}(\Omega) \ni u \mapsto y_u \in W^{1,p}_0(\Omega)$ is denoted by $S$, which is uniformly bounded by \cref{thm:existence}.

\subsection{Differentiability of the control-to-state operator}

We now prove the first-order differentiability of the control-to-state operator even for the non-differentiable coefficient $a$. To this end, we will employ the differentiability of the implicit mapping \cite[Thm.~2.1]{Wachsmuth2014}, which is a generalized version of the classical implicit function theorem \cite[Chap.~4]{ZeidlerI} and applies to a class of quasilinear PDEs.

We first derive the locally Lipschitz continuity of the control-to-state mapping $S$.
\begin{lemma} \label{lem:Lip}
    Let $p >N$ and $u \in W^{-1,p}(\Omega)$ be arbitrary. Let \crefrange{ass:domain}{ass:PC1-func} hold.  Then the operator $S$ is locally Lipschitz continuous at $u$ as a function from $W^{-1,p}(\Omega)$ to $W^{1,p}_0(\Omega)$. Moreover, for any bounded set $U$ in $W^{-1,p}(\Omega)$, there exists a constant $L_U$ such that
    \begin{equation}
        \label{eq:Lip-esti}
        \norm{S(u_1) - S(u_2)}_{W^{1,p}_0(\Omega)} \leq L_U \norm{u_1 - u_2}_{W^{-1,p}(\Omega)} \quad \text{for all } u_1, u_2 \in U.
    \end{equation}
\end{lemma}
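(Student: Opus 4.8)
The plan is to proceed, as in the proof of \cref{thm:existence}, via the Kirchhoff transformation $K(x,t)=b(x)t+\int_0^t a(s)\ds$, which converts the quasilinear difference equation into a \emph{linear} one whose operator is a compact perturbation of the Laplacian. Fix a bounded set $U\subset W^{-1,p}(\Omega)$ and $u_1,u_2\in U$, set $y_i:=S(u_i)$, and recall from \cref{thm:existence} and its proof that $\|y_i\|_{W^{1,p}_0(\Omega)}+\|y_i\|_{C(\overline\Omega)}\le C_U$ and that $\theta_i:=K(\cdot,y_i(\cdot))\in W^{1,p}_0(\Omega)$ solves $-\Delta\theta_i=u_i-\div(y_i\nabla b)$ with $\|\nabla\theta_i\|_{L^p(\Omega)}\le C_U$. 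For each fixed $x$ the map $t\mapsto K(x,t)$ has a $C^1$ inverse $T(x,\cdot)$ with $\partial_s T(x,s)=(b(x)+a(T(x,s)))^{-1}\in(0,\underline b^{-1}]$ by \cref{ass:b_func,ass:PC1-func}; applying the mean value theorem pointwise to $y_i=T(\cdot,\theta_i)$ yields $y_1-y_2=m\,(\theta_1-\theta_2)$ with $m\in C(\overline\Omega)$ and $0<m\le\underline b^{-1}$. Subtracting the two Kirchhoff equations, $\psi:=\theta_1-\theta_2\in W^{1,p}_0(\Omega)$ therefore solves the \emph{linear} equation
\begin{equation*}
    L_m\psi:=-\Delta\psi+\div\big(m\,\psi\,\nabla b\big)=u_1-u_2\qquad\text{in }W^{-1,p}(\Omega).
\end{equation*}

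The crux is to show that $L_m\colon W^{1,p}_0(\Omega)\to W^{-1,p}(\Omega)$ is an isomorphism whose inverse is bounded by a constant depending only on $\underline b$, $L_b$, $\Omega$, $p$, $N$ — not on the particular $m$. Since $\psi\mapsto\div(m\psi\nabla b)$ factors as multiplication by $m\nabla b\in L^\infty(\Omega;\R^N)$ composed with $\div\colon L^p(\Omega;\R^N)\to W^{-1,p}(\Omega)$ and the compact embedding $W^{1,p}_0(\Omega)\Subset L^p(\Omega)$, it is a compact operator; as $-\Delta$ is an isomorphism $W^{1,p}_0(\Omega)\to W^{-1,p}(\Omega)$ on the convex domain $\Omega$ (\cite[Cor.~1]{Fromm1993}), $L_m$ is a compact perturbation of an isomorphism, hence Fredholm of index zero. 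Its injectivity follows from the weak maximum principle for the formal adjoint $L_m^{*}\phi=-\Delta\phi-m\,\nabla b\cdot\nabla\phi$: this operator has no zeroth-order term, so $L_m^{*}\phi=0$ with $\phi=0$ on $\partial\Omega$ forces $\phi\equiv0$. The uniformity of $\|L_m^{-1}\|$ over all admissible coefficients (those with $\|m\|_{L^\infty(\Omega)}\le\underline b^{-1}$) I would obtain by a weak-compactness contradiction: sequences $m_n$, $\psi_n$ with $\|\psi_n\|_{W^{1,p}_0(\Omega)}=1$ and $L_{m_n}\psi_n\to0$ in $W^{-1,p}(\Omega)$ would admit, along a subsequence, $m_n\to m_\infty$ weakly-$*$ in $L^\infty(\Omega)$ and $\psi_n\weakto\psi_\infty$ in $W^{1,p}_0(\Omega)$ with $\psi_n\to\psi_\infty$ in $L^p(\Omega)$; passing to the limit gives $L_{m_\infty}\psi_\infty=0$, hence $\psi_\infty=0$, whereupon $m_n\nabla b\,\psi_n\to0$ strongly in $L^p(\Omega)$ and $-\Delta\psi_n=L_{m_n}\psi_n-\div(m_n\nabla b\,\psi_n)\to0$ in $W^{-1,p}(\Omega)$ force $\psi_n\to0$ in $W^{1,p}_0(\Omega)$, a contradiction. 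This is the main obstacle: a direct energy/absorption estimate on the $\psi$-equation would only close under a smallness condition linking $L_b$ and $\underline b$, which is not assumed here, so the Fredholm-plus-maximum-principle argument is genuinely needed.

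Finally I would transfer the resulting bound $\|\theta_1-\theta_2\|_{W^{1,p}_0(\Omega)}=\|L_m^{-1}(u_1-u_2)\|_{W^{1,p}_0(\Omega)}\le C_U\|u_1-u_2\|_{W^{-1,p}(\Omega)}$ back to $y$. Differentiating $y_i=T(\cdot,\theta_i)$ gives $\nabla y_i=\partial_x T(\cdot,\theta_i)+\partial_s T(\cdot,\theta_i)\nabla\theta_i$, and on the uniformly bounded range of the $\theta_i$ the maps $s\mapsto\partial_x T(x,s)$ and $s\mapsto\partial_s T(x,s)$ are Lipschitz uniformly in $x$, because $b$ and $a$ are Lipschitz and $b+a\ge\underline b$. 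A termwise estimate then gives $|\nabla(y_1-y_2)|\le C_U\big(|\psi|+|\nabla\psi|+|\psi|\,|\nabla\theta_2|\big)$ a.e. in $\Omega$, and taking $L^p$-norms — using $W^{1,p}_0(\Omega)\hookrightarrow C(\overline\Omega)$ since $p>N$, the a priori bound $\|\nabla\theta_2\|_{L^p(\Omega)}\le C_U$, and the Poincaré inequality — yields $\|y_1-y_2\|_{W^{1,p}_0(\Omega)}\le C_U\|\psi\|_{W^{1,p}_0(\Omega)}\le L_U\|u_1-u_2\|_{W^{-1,p}(\Omega)}$, which is \eqref{eq:Lip-esti}. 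Local Lipschitz continuity of $S$ at $u$ is then the special case $U=\overline B_{W^{-1,p}(\Omega)}(u,1)$.
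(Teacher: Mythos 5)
Your argument is correct in substance and reaches the same estimate, but it reorganizes the key compactness step differently from the paper. The shared skeleton is the same: Kirchhoff transform plus the Fromm isomorphism for $-\Delta$ on the convex domain, pointwise inversion of $K(x,\cdot)$ to pass between $y$ and $\theta$, and a normalization--weak-limit--uniqueness contradiction to close the estimate without any smallness condition on $L_b/\underline b$ (your remark that a direct absorption argument cannot work is exactly why the paper also resorts to a contradiction argument). The paper, however, keeps the lower-order term $-\div[\nabla b\,(y_1-y_2)]$ as data, derives the chain $\norm{y_1-y_2}_{W^{1,p}_0(\Omega)}\le C_U[\norm{u_1-u_2}_{W^{-1,p}(\Omega)}+\norm{y_1-y_2}_{L^1(\Omega)}]$, and then runs the contradiction argument on the normalized differences $\xi_n$ of the \emph{states}, whose equation has coefficients $a_n=b+a(y_1^{(n)})$ and a difference-quotient drift $\mathbb{b}_n$ only bounded in $L^p$; uniqueness for the limit equation is taken from \cite[Thm.~2.6]{CasasDhamo2011}. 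You instead absorb the difference quotient into a single coefficient $m$ with $0<m\le\underline b^{-1}$ and prove a uniform bound on $\norm{L_m^{-1}}$ over this coefficient class, which buys a cleaner coefficient family ($m\nabla b$ uniformly bounded in $L^\infty$ rather than merely in $L^p$) and directly contradicts the full $W^{1,p}$ bound, at the price of invoking Fredholm theory. Two small points to tighten: (i) the mean-value coefficient $m$ need not be continuous (the intermediate point is not unique), but measurability and the bound $0<m\le\underline b^{-1}$ are all you use, so state only that; (ii) your injectivity argument via the adjoint is slightly incomplete as written, because the relevant kernel of $L_m^{*}$ lives in $W^{1,p'}_0(\Omega)$ with $p'=p/(p-1)<2$, so the weak maximum principle (an $H^1$ statement) applies only after a short elliptic bootstrap of kernel elements up to $H^1_0(\Omega)$; alternatively you can sidestep this entirely by citing the same uniqueness result \cite[Thm.~2.6]{CasasDhamo2011} for $L_{m_\infty}$, which is exactly of the divergence form $-\div[\nabla\psi - m\,\psi\,\nabla b]$ covered there.
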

\begin{proof}
    It is enough to prove \eqref{eq:Lip-esti}.
    Let $u_1, u_2 \in U$ be arbitrary and set $y_i:= S(u_i)$ and $\theta_i(x) := K(x,y_i(x))$, $i=1,2$, with $K$ defined in \eqref{eq:Kirchoff-trans}. Similar to \eqref{eq:state-Kirchhoff}, we have
    \begin{equation} \label{eq:state-Kirchhoff-Lip}
        \left\{
            \begin{aligned}
                -\Delta (\theta_1 - \theta_2) &= u_1 -u_2 - \div [\nabla b (y_1 - y_2)] && \text{in } \Omega, \\
                \theta_1 - \theta_2 &=0 && \text{on } \partial\Omega.
            \end{aligned}
        \right.
    \end{equation}
    Applying \cite[Cor.~1]{Fromm1993} to \eqref{eq:state-Kirchhoff-Lip} and using the fact that $\norm{\nabla b}_{L^\infty(\Omega)} \leq L_{b}$ yields
    \begin{align} \label{eq:theta-esti-w1p}
        \|\theta_1 -\theta_2 \|_{W^{1,p}_0(\Omega)} 
        & \leq C_{\Omega, p, N} \left[ \norm{u_1 - u_2}_{W^{-1,p}(\Omega)}  + L_{b}\norm{ y_1 -y_2}_{L^{p}(\Omega)}  \right].
    \end{align}
    By the definition of $\theta_i$, $i=1,2$, it follows for all $x \in \overline \Omega$ that
    \begin{equation*}
        \theta_1(x) - \theta_2(x) = b(x) (y_1(x) - y_2(x)) + \int_{y_2(x)}^{y_1(x)} a(s) \ds.
    \end{equation*}
    From this and a straightforward computation, we derive for all $1 \leq m \leq N$ that
    \begin{equation} \label{eq:Lip-deri}
        \frac{\partial}{\partial x_m}(y_1 - y_2) = \frac{1}{b + a(y_1)} \left[\frac{\partial}{\partial x_m}(\theta_1 - \theta_2) - \frac{\partial b}{\partial x_m}(y_1 - y_2) - (a(y_1) - a(y_2))\frac{\partial y_2}{\partial x_m}   \right].
    \end{equation}
    For almost every $x \in \Omega$, since $K(x, \cdot)$ is monotonically increasing, so is its inverse $T(x,\cdot)$. This implies for almost every $x \in \Omega$ that $\theta_1(x) \geq \theta_2(x)$ if and only if $y_1(x) \geq y_2(x)$. Consequently, we obtain 
    \begin{equation*}
        |\theta_1(x) - \theta_2(x) | = b(x) |y_1(x) - y_2(x)| + \left| \int_{y_2(x)}^{y_1(x)} a(s) \ds  \right| \geq \underline{b}|y_1(x) - y_2(x)|.
    \end{equation*} 
    From this, the continuous embedding $W^{1,p}_0(\Omega) \hookrightarrow C(\overline\Omega)$, and \eqref{eq:theta-esti-w1p}, there holds
    \begin{equation} \label{eq:Lip-infty-esti}
        \|y_1 -y_2 \|_{L^\infty(\Omega)} \leq C_{\Omega, p, N, L_{b}, \underline{b}}\left[ \norm{u_1 - u_2}_{W^{-1,p}(\Omega)}  + \norm{ y_1 -y_2}_{L^{p}(\Omega)}  \right].
    \end{equation}
    Furthermore, as a result of \cref{thm:existence} and the continuous embedding $W^{1,p}_0(\Omega) \hookrightarrow C(\overline\Omega)$, there exists a constant $C_U >0$ such that
    \begin{equation}
        \label{eq:bounds}
        \norm{y_i}_{C(\overline\Omega)}, \norm{y_i}_{W^{1,p}_0(\Omega)} \leq C_{1,U} \quad \text{for } i=1,2.
    \end{equation}
    The combination of \eqref{eq:Lip-deri} with \eqref{eq:theta-esti-w1p}, \eqref{eq:Lip-infty-esti}, \eqref{eq:bounds}, and \cref{ass:b_func,ass:PC1-func} implies that
    \begin{equation*}
        \|y_1 -y_2 \|_{W^{1,p}_0(\Omega)} \leq C_{2,U} \left[ \norm{u_1 - u_2}_{W^{-1,p}(\Omega)}  + \norm{ y_1 -y_2}_{L^{p}(\Omega)}  \right].
    \end{equation*}
    Combining this with Young's inequality and the continuous embedding $W^{1,p}_0(\Omega) \hookrightarrow L^\infty(\Omega)$, there holds for all $\epsilon >0$ that
    \begin{align*}
        \|y_1 -y_2 \|_{W^{1,p}_0(\Omega)} & \leq C_{2,U}  \left[ \norm{u_1 - u_2}_{W^{-1,p}(\Omega)}  + \norm{ y_1 -y_2}_{L^{\infty}(\Omega)}^{(p-1)/p} \norm{ y_1 -y_2}_{L^{1}(\Omega)}^{1/p}  \right] \\
        & \leq C_{2,U}  \left[ \norm{u_1 - u_2}_{W^{-1,p}(\Omega)}  + \epsilon^{p/(p-1)}\norm{ y_1 -y_2}_{L^{\infty}(\Omega)} + \frac{1}{\epsilon^{p}} \norm{ y_1 -y_2}_{L^{1}(\Omega)}  \right] \\
        & \leq C_{2,U}  \left[ \norm{u_1 - u_2}_{W^{-1,p}(\Omega)}  + \epsilon^{p/(p-1)}\norm{ y_1 -y_2}_{W^{1,p}_0(\Omega)} + \frac{1}{\epsilon^{p}} \norm{ y_1 -y_2}_{L^{1}(\Omega)}  \right].
    \end{align*}
    By choosing $\epsilon = \epsilon(p,C_{2,U}) >0$ small enough, we arrive at
    \begin{equation} \label{eq:Lip-semi}
        \|y_1 -y_2 \|_{W^{1,p}_0(\Omega)} \leq C_{U} \left[ \norm{u_1 - u_2}_{W^{-1,p}(\Omega)}  + \norm{ y_1 -y_2}_{L^{1}(\Omega)}  \right].
    \end{equation}
    We now show that there is a constant $L_U$ satisfying
    \begin{equation}
        \label{eq:w-esti}
        \norm{y_1 -y_2}_{L^{2p/(p-2)}(\Omega)} \leq L_U  \norm{u_1 - u_2}_{W^{-1,p}(\Omega)} \quad \text{for all } u_1, u_2 \in U,
    \end{equation}
    which, together with \eqref{eq:Lip-semi}, gives the desired conclusion. Assume to the contrary that \eqref{eq:w-esti} does not hold. Then we can find $u_1^{(n)}, u_2^{(n)} \in U$ such that 
    \begin{equation*}
        \frac{1}{\eta_n} \norm{y_1^{(n)} - y_2^{(n)} }_{L^{2p/(p-2)}(\Omega)} \to +\infty
    \end{equation*}
    with $\eta_n := \norm{u_1^{(n)} - u_2^{(n)}}_{W^{-1,p}(\Omega)}$ and $y_i^{(n)} := S(u_i^{(n)})$, $ i =1,2$. Obviously, $\eta_n \to 0$ as $n \to \infty$.
    We now define a scalar function $a_{n}$ on $\overline\Omega$ and a vector-valued function $\mathbb{b}_{n}$ on $\Omega$ by 
    \begin{equation*}
        {a}_{n}(x) := b(x) + a(y_1^{(n)}(x)),\qquad
        \mathbb{b}_{n}(x) := \nabla y_2^{(n)}(x)\1_{\{y_1^{(n)} \neq y_2^{(n)}\}}(x) \frac{a(y_1^{(n)}(x)) - a(y_2^{(n)}(x))}{y_1^{(n)}(x)- y_2^{(n)}(x)}.
    \end{equation*} 
    As a result of \eqref{eq:apriori}, a constant $C_U$ exists such that
    \begin{equation} \label{eq:bounded-bn}
        \norm{y_i^{(n)}}_{W^{1,p}_0(\Omega)}, \norm{\mathbb b_n}_{L^p(\Omega)} \leq C_U \quad \text{for all } n \in\N, i = 1,2.
    \end{equation}
    Setting $w_n := y_1^{(n)} - y_2^{(n)}$ yields
    \begin{equation*} 
        \left\{
            \begin{aligned}
                -\div [a_{n} \nabla w_n + \mathbb{b}_{n}w_n ]& = u_1^{(n)} - u_2^{(n)} && \text{in } \Omega, \\
                w_n &=0 && \text{on } \partial\Omega.
            \end{aligned}
        \right.
    \end{equation*}
    Setting
    \begin{equation*}
        \rho_n := \frac{\eta_n}{\norm{y_1^{(n)} - y_2^{(n)} }_{L^{2p/(p-2)}(\Omega)} } , \qquad
        \xi_n := \frac{\rho_n}{\eta_n} w_n,
        \qquad
        h_n := \frac{\rho_n}{\eta_n} \left[u_1^{(n)} - u_2^{(n)} \right],
    \end{equation*}
    we see that $\rho_n \to 0$ and that $\xi_n$ solves
    \begin{equation} \label{eq:xi-auxi}
        \left\{
            \begin{aligned}
                -\div [a_{n} \nabla \xi_n + \mathbb{b}_{n} \xi_n ]& = h_n && \text{in } \Omega, \\
                \xi_n &=0 && \text{on } \partial\Omega.
            \end{aligned}
        \right.
    \end{equation}
    Testing the above equation by $\xi_n$ and employing the Hölder inequality yield
    \begin{equation*}
        \underline{b} \norm{\nabla \xi_n}_{L^2(\Omega)}^2  \leq \norm{h_n}_{H^{-1}(\Omega)}\norm{\xi_n}_{H^1_0(\Omega)} + \norm{\mathbb{b}_{n}}_{L^p(\Omega)} \norm{\xi_n}_{L^{2p/(p-2)}}\norm{\nabla \xi_n}_{L^2(\Omega)},
    \end{equation*}  
    which, together with \eqref{eq:bounded-bn} and the fact that $\norm{\xi_n}_{H^1_0(\Omega)} \leq C\norm{\nabla \xi_n}_{L^2(\Omega)}$, gives
    \begin{equation} \label{eq:H1-esti}
        \begin{aligned}[t]
            \underline{b} \norm{\nabla \xi_n}_{L^2(\Omega)}  &\leq \norm{h_n}_{H^{-1}(\Omega)} + C\norm{\mathbb{b}_{n}}_{L^p(\Omega)} \norm{\xi_n}_{L^{2p/(p-2)}} \\
            & \leq C \norm{h_n}_{W^{-1,p}(\Omega)} + C_U \\
            & = C \rho_n + C_U \\
            & \leq C + C_U
        \end{aligned}
    \end{equation}  
    for $n$ large enough. From this and the compact embedding $H^{1}_0(\Omega) \Subset L^{2p/(p-2)}(\Omega)$, we can assume that 
    $\xi_n \rightharpoonup \xi$ in $H^1_0(\Omega)$ and $\xi_n \to \xi$ in $L^{2p/(p-2)}(\Omega)$ for some $\xi \in H^1_0(\Omega)$. Moreover, there exist subsequences of $\{y_1^{(n)}\}$ and $\{\mathbb{b}_n \}$, denoted in the same way, such that $y_1^{(n)} \to y_*$ in $C(\overline\Omega)$ and $\mathbb{b}_n \rightharpoonup \mathbb{b}$ in $L^p(\Omega)^N$ for some $y_* \in C(\overline\Omega)$ and $\mathbb{b} \in L^p(\Omega)^N$. Therefore, we have $a_n \to a_*$ in $C(\overline\Omega)$ with $a_*(x) := b(x) +a(y_*(x))$. Passing to the limit in \eqref{eq:xi-auxi}, we deduce from the fact that $h_n \to 0$ in $H^{-1}(\Omega)$ that $\xi$ fulfills
    \begin{equation*} 
        \left\{
            \begin{aligned}
                -\div [a_* \nabla \xi + \mathbb{b} \xi ]& = 0 && \text{in } \Omega, \\
                \xi &=0 && \text{on } \partial\Omega.
            \end{aligned}
        \right.
    \end{equation*} 
    The uniqueness of solutions, see, e.g., \cite[Thm.~2.6]{CasasDhamo2011}, implies that $\xi = 0$, which contradicts the fact that $\norm{\xi}_{L^{2p/(p-2)}(\Omega)} = \lim_{n \to \infty}\norm{\xi_n}_{L^{2p/(p-2)}(\Omega)} = 1$.
\end{proof}

For any $y, \hat y \in C(\overline\Omega)$ and any $\tau_1, \tau_2 \in \R$ we define the set
\begin{equation}
    \label{eq:Omega-sets}
    \Omega_{\hat y, i, j }^{[\tau_1, \tau_2]} := \left\{ \hat y \in [t_i + \tau_1, t_j + \tau_2 ]  \right\},
\end{equation}
where $t_i$, $i \in \{0,1,\ldots, \barK+1\}$, are given in \cref{ass:PC1-func}. Similar sets such as $\Omega_{\hat y, i, j}^{[\tau_1, \tau_2)}$ are defined in the same way.
We also define the function $T_{y, \hat y}: \Omega \to \R$ via
\begin{equation}
    \label{eq:T-func}
    T_{y, \hat y} := \1_{\{\hat y \notin E_{a}\} }\left[ a(y) - a(\hat y) - a'(\hat y)(y -\hat y) \right].
\end{equation}
From now on, let us fix a number $\delta \in R$ such that 
\begin{equation*}
    0 < \delta \leq \frac{t_{i+1} - t_i}{2} \quad \text{for all } 1 \leq i \leq \barK -1.
\end{equation*} 
We need the following lemmas.
\begin{lemma}\label{lem:T-decomposition}
    Let \cref{ass:PC1-func} be satisfied. Then, for any $y, \hat y \in C(\overline\Omega)$ with $\norm{y - \hat y}_{C(\overline\Omega)} < \delta$, there holds
    \begin{equation}
        \label{eq:T-decomposition}
        T_{y, \hat y} = \sum_{i \in I_{\hat y}} \left( T_{y, \hat y}^{i,1} + T_{y, \hat y}^{i,2} + T_{y, \hat y}^{i,3}\right)
    \end{equation}
    with $I_{\hat y}$ defined via \eqref{eq:index-set-reduced} and
    \begin{align*}
        & T_{y, \hat y}^{i,1} := \1_{ \Omega_{y, \hat y }^{i,1} }\left[ a_i(y) - a_i(\hat y) - a_i'(\hat y)(y -\hat y) \right],\\
        & T_{y, \hat y}^{i,2} := \1_{ \Omega_{y, \hat y }^{i,2} }\left[ a_{i-1}(y) - a_i(\hat y) - a_i'(\hat y)(y -\hat y) \right],\\
        & T_{y, \hat y}^{i,3} := \1_{ \Omega_{y, \hat y }^{i,3} }\left[ a_{i+1}(y) - a_i(\hat y) - a_i'(\hat y)(y -\hat y) \right], 
    \end{align*}
    where 
    \begin{align*}
        &\Omega_{y, \hat y }^{i,1} := \Omega_{\hat y, i, i +1}^{[\delta, -\delta]} \cup \left( \Omega_{\hat y, i, i }^{(0, \delta)} \cap \Omega_{y, i, i }^{(0, 2\delta)}  \right) \cup \left( \Omega_{\hat y, i+1, i+1}^{(-\delta, 0)} \cap \Omega_{y, i+1, i+1 }^{(-2\delta, 0)}  \right), \\
        & \Omega_{y, \hat y }^{i,2} := \Omega_{\hat y, i, i }^{(0, \delta)} \cap \Omega_{y, i, i }^{(-\delta, 0]}, \quad \Omega_{y, \hat y }^{i,3} := \Omega_{\hat y, i+1, i+1 }^{(-\delta, 0)} \cap \Omega_{y, i+1, i+1 }^{[0,\delta)}. 
    \end{align*}
    Moreover, if $y_n \to \hat y$ in $W^{1,p}_0(\Omega)$ with $p>N$, then 
    \begin{equation}
        \label{eq:T-limit}
        \frac{1}{\norm{y_n - \hat y}_{W^{1,p}_0(\Omega)}} \norm{ T_{y_n, \hat y} \nabla \hat y}_{L^p(\Omega)} \to 0.
    \end{equation}
\end{lemma}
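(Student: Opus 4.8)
The plan is to establish the pointwise identity \eqref{eq:T-decomposition} first and then deduce the limit \eqref{eq:T-limit} from it, using the Sobolev embedding $W^{1,p}_0(\Omega)\hookrightarrow C(\overline\Omega)$ (which gives $y_n\to\hat y$ in $C(\overline\Omega)$, so that $\norm{y_n-\hat y}_{C(\overline\Omega)}<\delta$ for $n$ large and $\norm{w}_{C(\overline\Omega)}\le C_S\norm{w}_{W^{1,p}_0(\Omega)}$ for every $w$, with $C_S$ the embedding constant), the finiteness of $I_{\hat y}$, and a Taylor estimate on each of the finitely many summands of \eqref{eq:T-decomposition}.

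For \eqref{eq:T-decomposition} I argue pointwise at a fixed $x\in\Omega$, assuming $\norm{y-\hat y}_{C(\overline\Omega)}<\delta$. If $\hat y(x)\in E_a$, both sides vanish: the left-hand side by the factor $\1_{\{\hat y\notin E_a\}}$, and the right-hand side because every set $\Omega^{i,1}_{y,\hat y},\Omega^{i,2}_{y,\hat y},\Omega^{i,3}_{y,\hat y}$ confines $\hat y(x)$ to an open subinterval of $(t_i,t_{i+1})$, which meets $E_a$ in no point. Otherwise $\hat y(x)\in(t_i,t_{i+1})$ for a unique $i\in I_{\hat y}$, so $a(\hat y(x))=a_i(\hat y(x))$ and $a'(\hat y(x))=a_i'(\hat y(x))$, and $y(x)$ is within distance $\delta$ of $[t_i,t_{i+1}]$. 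Using the standing bound $0<\delta\le(t_{j+1}-t_j)/2$ for $1\le j\le\barK-1$ (with $t_0=-\infty$, $t_{\barK+1}=+\infty$), exactly one of three alternatives holds: $y(x)\in(t_i,t_{i+1})$, so $a(y(x))=a_i(y(x))$ and $x\in\Omega^{i,1}_{y,\hat y}$; or $\hat y(x)\in(t_i,t_i+\delta)$ and $y(x)\le t_i$, so $y(x)\in(t_{i-1},t_i]$, $a(y(x))=a_{i-1}(y(x))$ by \eqref{eq:PC1-rep}, and $x\in\Omega^{i,2}_{y,\hat y}$; or $\hat y(x)\in(t_{i+1}-\delta,t_{i+1})$ and $y(x)\ge t_{i+1}$, so $y(x)\in[t_{i+1},t_{i+2})$, $a(y(x))=a_{i+1}(y(x))$ by \eqref{eq:PC1-rep} and \eqref{eq:PC1-continuous-cond}, and $x\in\Omega^{i,3}_{y,\hat y}$. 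In each case $T_{y,\hat y}(x)$ coincides with the matching $T^{i,k}_{y,\hat y}(x)$, and $x$ lies in no other set $\Omega^{j,l}_{y,\hat y}$; summing over $i\in I_{\hat y}$ and $k$ gives \eqref{eq:T-decomposition}. The only bookkeeping is to check that, for fixed $i$, the three sets $\Omega^{i,1}_{y,\hat y},\Omega^{i,2}_{y,\hat y},\Omega^{i,3}_{y,\hat y}$ are pairwise disjoint and cover $\{\hat y\in(t_i,t_{i+1})\}$ — which is precisely what the three constituent pieces of $\Omega^{i,1}_{y,\hat y}$ (treating $\hat y$ well inside $(t_i,t_{i+1})$, near $t_i$ with $y>t_i$, and near $t_{i+1}$ with $y<t_{i+1}$) are designed to ensure; here the strict inequality $\norm{y-\hat y}_{C(\overline\Omega)}<\delta$ is used to force $y(x)\in(t_i,t_{i+1})$ on the first piece $\{\hat y\in[t_i+\delta,t_{i+1}-\delta]\}$.

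For \eqref{eq:T-limit}, since $I_{\hat y}\subseteq\{0,\dots,\barK\}$ is finite, it suffices to show $\norm{T^{i,k}_{y_n,\hat y}\nabla\hat y}_{L^p(\Omega)}=o(\norm{y_n-\hat y}_{W^{1,p}_0(\Omega)})$ for each fixed $i\in I_{\hat y}$, $k\in\{1,2,3\}$ (note $\hat y\in W^{1,p}_0(\Omega)$, so $\nabla\hat y\in L^p(\Omega)^N$). For $k=1$, all relevant arguments lie in a fixed bounded interval on which $a_i\in C^2$, so Taylor's theorem yields $|T^{i,1}_{y_n,\hat y}|\le C|y_n-\hat y|^2\le C\norm{y_n-\hat y}_{C(\overline\Omega)}|y_n-\hat y|$, hence $\norm{T^{i,1}_{y_n,\hat y}\nabla\hat y}_{L^p(\Omega)}\le C\norm{y_n-\hat y}_{C(\overline\Omega)}^2\norm{\nabla\hat y}_{L^p(\Omega)}$, and dividing by $\norm{y_n-\hat y}_{W^{1,p}_0(\Omega)}\ge C_S^{-1}\norm{y_n-\hat y}_{C(\overline\Omega)}$ gives a vanishing quantity. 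For $k=2$ (and $k=3$ symmetrically), on $\Omega^{i,2}_{y_n,\hat y}$ one has $y_n(x)\le t_i<\hat y(x)$ with $|y_n-\hat y|<\delta$, so $|y_n(x)-t_i|\le|y_n(x)-\hat y(x)|$ and $|\hat y(x)-t_i|\le|y_n(x)-\hat y(x)|$; splitting $a_{i-1}(y_n)-a_i(\hat y)=[a_{i-1}(y_n)-a_{i-1}(t_i)]+[a_i(t_i)-a_i(\hat y)]$ via \eqref{eq:PC1-continuous-cond} and using local Lipschitz continuity of $a_{i-1},a_i$ and boundedness of $a_i'$ gives the crude bound $|T^{i,2}_{y_n,\hat y}|\le C|y_n-\hat y|\,\1_{\Omega^{i,2}_{y_n,\hat y}}\le C\norm{y_n-\hat y}_{C(\overline\Omega)}\,\1_{\Omega^{i,2}_{y_n,\hat y}}$, so that
\[
\frac{\norm{T^{i,2}_{y_n,\hat y}\nabla\hat y}_{L^p(\Omega)}}{\norm{y_n-\hat y}_{W^{1,p}_0(\Omega)}}\le C\,C_S\,\norm{\1_{\Omega^{i,2}_{y_n,\hat y}}\nabla\hat y}_{L^p(\Omega)}.
\]
Finally, on $\Omega^{i,2}_{y_n,\hat y}$ one has $0<\hat y(x)-t_i\le\hat y(x)-y_n(x)\le\norm{y_n-\hat y}_{C(\overline\Omega)}=:\epsilon_n\to0$, so $\Omega^{i,2}_{y_n,\hat y}\subseteq\{t_i<\hat y\le t_i+\epsilon_n\}$; continuity of the Lebesgue measure from above gives $\meas(\Omega^{i,2}_{y_n,\hat y})\to0$, whence $\norm{\1_{\Omega^{i,2}_{y_n,\hat y}}\nabla\hat y}_{L^p(\Omega)}\to0$ by absolute continuity of the integral ($|\nabla\hat y|^p\in L^1(\Omega)$). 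Collecting the three estimates over the finite index set yields \eqref{eq:T-limit}.

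The main obstacle is not the (lengthy but elementary) combinatorics of the decomposition, nor the quadratic term $T^{i,1}$, which is handled by ordinary Taylor expansion, but the near-jump contributions $T^{i,2},T^{i,3}$: there $T$ is only linear — not quadratic — in $y_n-\hat y$, with a coefficient comparable to the jump $\sigma_i$, so a crude estimate alone does not produce the required $o(\norm{y_n-\hat y}_{W^{1,p}_0(\Omega)})$. The linear part is absorbed only because (i) $|y_n-t_i|\le|y_n-\hat y|$ on $\Omega^{i,2}_{y_n,\hat y}$, and (ii), crucially, the sets $\Omega^{i,2}_{y_n,\hat y}$ and $\Omega^{i,3}_{y_n,\hat y}$ collapse onto the level sets $\{\hat y=t_i\}$ and $\{\hat y=t_{i+1}\}$ in measure along the sequence, so the $L^p(\Omega)$-norm of $\nabla\hat y$ restricted to them tends to zero. (The sharper Taylor expansion that isolates the jumps $\sigma_i$ of $a'$ — which will matter for the second-order expansion of the reduced objective — is not needed here.)
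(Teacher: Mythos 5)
Your proof is correct and takes essentially the same route as the paper's: the same decomposition of $\Omega$ (argued pointwise at each $x$ rather than by splitting indicator functions, but with identical content), a quadratic Taylor bound for $T^{i,1}_{y_n,\hat y}$, and the linear bound $|T^{i,k}_{y_n,\hat y}|\le C|y_n-\hat y|$ on $\Omega^{i,2}_{y_n,\hat y},\Omega^{i,3}_{y_n,\hat y}$ combined with the fact that these sets collapse into $\{\hat y = t_i\}$, $\{\hat y = t_{i+1}\}$ so that $\norm{\1_{\Omega^{i,k}_{y_n,\hat y}}\nabla\hat y}_{L^p(\Omega)}\to 0$. The only (immaterial) difference is that you conclude via shrinking measure and absolute continuity of the integral where the paper uses pointwise convergence of the indicators and dominated convergence.
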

\begin{proof}
    We have 
    \begin{equation}\label{eq:T-decomposition1}
        \begin{aligned}[t]
            T_{y, \hat y} & = \sum_{i = 0}^{\barK} \1_{\Omega_{\hat y, i,i+1}^{(0,0)}} \left[ a(y) - a_i(\hat y) - a_i'(\hat y)( y -\hat y) \right]\\
            & = \sum_{i \in I_{\hat y}} \left[\1_{\Omega_{\hat y, i,i}^{(0,\delta)}} + \1_{\Omega_{\hat y, i,i+1}^{[\delta,-\delta]}} +\1_{\Omega_{\hat y, i+1,i+1}^{(-\delta,0)}} \right]\left[ a(y) - a_i(\hat y) - a_i'(\hat y)( y -\hat y) \right],
        \end{aligned}
    \end{equation} 
    using the fact that $\1_{\Omega_{\hat y, i,i+1}^{(0,0)}} \equiv 0$ for all $i \notin I_{\hat y}$.
    Since $\norm{y - \hat y}_{C(\overline\Omega)} < \delta$, we have
    \begin{align*}
        \Omega_{\hat y, i,i+1}^{[\delta,-\delta]} &= \Omega_{\hat y, i,i+1}^{[\delta,-\delta]} \cap \Omega_{y, i,i+1}^{(0,0)},\\
        \Omega_{\hat y, i,i}^{(0,\delta)} &= \Omega_{\hat y, i,i}^{(0,\delta)} \cap \left( \Omega_{y, i, i }^{(-\delta, 0]} \cup  \Omega_{y, i, i }^{(0, 2\delta)}\right),\\
        \Omega_{\hat y, i+1,i+1}^{(-\delta,0)} &= \Omega_{\hat y, i+1,i+1}^{(-\delta,0)} \cap \left(\Omega_{y, i+1, i+1 }^{(-2\delta, 0)} \cup \Omega_{y, i+1, i+1 }^{[0,\delta)}\right).
    \end{align*}
    Together with \eqref{eq:T-decomposition1}, these yield the claimed expression.

    It remains to prove the limit \eqref{eq:T-limit}. Let $y_n \to \hat y$ in $W^{1,p}_0(\Omega)$ and set $\epsilon_n := \norm{y_n - \hat y}_{C(\overline\Omega)}$. Then $\epsilon_n \to 0$ since $W^{1,p}_0(\Omega) \hookrightarrow C(\overline\Omega)$. We therefore can assume that $\epsilon_n < \delta$  and $|y_n(x)| \leq M$ for some $M>0$ and all $x \in \overline\Omega$ for $n\in \N$ sufficiently large. Using \eqref{eq:T-decomposition}, we obtain
    \begin{equation}
        \label{eq:T-esti}
        \norm{T_{y_n, \hat y} \nabla \hat y}_{L^p(\Omega)} \leq \sum_{i \in I_{\hat y} } \left( \norm{T_{y_n, \hat y}^{i,1}\nabla \hat y}_{L^p(\Omega)} + \norm{T_{y_n, \hat y}^{i,2}\nabla \hat y}_{L^p(\Omega)} + \norm{T_{y_n, \hat y}^{i,3}\nabla \hat y}_{L^p(\Omega)} \right).
    \end{equation}
    From the definition of $T_{y_n,\hat y}^{i,1}$, the dominated convergence theorem yields
    \begin{equation}
        \label{eq:T-limit2}
        \frac{1}{\epsilon_n} \norm{T_{y_n, \hat y}^{i,1}\nabla \hat y}_{L^p(\Omega)} \to 0 \quad \text{for all } i \in I_{\hat y}.
    \end{equation}
    For $\frac{1}{\epsilon_n} \norm{T_{y_n, \hat y}^{i,2}\nabla \hat y}_{L^p(\Omega)}$, we have
    \begin{equation*}
        \begin{aligned}
            |a_{i-1}(y_n) - a_i(\hat y) - a_i'(\hat y)(y_n - \hat y)| 
            & = |a_{i-1}(y_n) - a_{i-1}(t_i) + a_i(t_i) - a_i(\hat y) - a_i'(\hat y)(y_n - \hat y)|\\
            & \leq L_i\left( |y_n - t_i| + |t_i - \hat y| + | y_n - \hat y| \right) \\
            & = 2L_i | y_n - \hat y|  \\
            & \leq 2 L_i \epsilon_n
        \end{aligned} 
    \end{equation*} 
    for almost every $x\in \Omega_{y_n, \hat y }^{i,2}$ and for some constant $L_i$ depending on $M$. Moreover, $\1_{\Omega_{y_n, \hat y }^{i,2}} \to 0$ almost everywhere in $\Omega$. Combining this with the dominated convergence theorem yields
    \begin{equation}
        \label{eq:T-limit3}
        \frac{1}{\epsilon_n} \norm{T_{y_n, \hat y}^{i,2}\nabla \hat y}_{L^p(\Omega)} \to 0 \quad \text{for all } i \in I_{\hat y}.
    \end{equation}
    Similarly, 
    \begin{equation}
        \label{eq:T-limit4}
        \frac{1}{\epsilon_n} \norm{T_{y_n, \hat y}^{i,3}\nabla \hat y}_{L^p(\Omega)} \to 0 \quad \text{for all } i \in I_{\hat y}.
    \end{equation}
    From \eqref{eq:T-esti}--\eqref{eq:T-limit4} and the fact that $\epsilon_n \leq C\norm{y_n - \hat y}_{W^{1,p}_0(\Omega)}$ and that $I_{\hat y}$ is finite, we obtain the limit \eqref{eq:T-limit}.   
\end{proof}

\begin{lemma}
    \label{lem:smooth-der}
    Let $y_n \to y$ in $W^{1,p}_0(\Omega)$ with $p>N$. Under \cref{ass:PC1-func}, there holds
    \begin{equation*}
        \1_{\{y_n \notin E_{a} \}} a'(y_n) \nabla y_n  - \1_{\{y \notin E_{a} \}} a'(y) \nabla y  \to 0 \quad \text{in } L^p(\Omega).
    \end{equation*}
\end{lemma}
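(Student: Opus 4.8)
The plan is to exploit the embedding $W^{1,p}_0(\Omega) \hookrightarrow C(\overline\Omega)$, which holds because $p > N$, and to reduce the claim to a dominated-convergence argument whose only delicate point concerns the level sets of $y$. First I would record the basic consequences of $y_n \to y$ in $W^{1,p}_0(\Omega)$: the sequence $(y_n)$ is uniformly bounded in $C(\overline\Omega)$, say by some $M>0$, and converges uniformly to $y$; moreover $\nabla y_n \to \nabla y$ in $L^p(\Omega)^N$. From \cref{ass:PC1-func} I would then note that on $[-M, M] \setminus E_a$ the derivative $a'$ agrees, interval by interval, with one of the finitely many $C^1$-functions $a_i'$, each of which is bounded on $[-M, M]$; hence $a'$ is bounded by some $C_M$ on $[-M, M] \setminus E_a$, and consequently the functions $\1_{\{y_n \notin E_a\}} a'(y_n)$ are bounded in $L^\infty(\Omega)$ uniformly in $n$.

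Next I would split the expression as
\begin{equation*}
    \1_{\{y_n \notin E_a\}} a'(y_n)\nabla y_n - \1_{\{y \notin E_a\}} a'(y)\nabla y
    = \1_{\{y_n \notin E_a\}} a'(y_n)(\nabla y_n - \nabla y) + g_n \nabla y,
\end{equation*}
with $g_n := \1_{\{y_n \notin E_a\}} a'(y_n) - \1_{\{y \notin E_a\}} a'(y)$. The first summand is dealt with immediately: its $L^p$-norm is at most $C_M \norm{\nabla y_n - \nabla y}_{L^p(\Omega)} \to 0$. For the second summand, since $|g_n \nabla y|^p \leq (2C_M)^p |\nabla y|^p \in L^1(\Omega)$, it suffices by dominated convergence to prove that $g_n(x)\nabla y(x) \to 0$ for almost every $x \in \Omega$. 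On the set $\{y \notin E_a\}$ this is easy: if $y(x) \in (t_i, t_{i+1})$, then uniform convergence forces $y_n(x) \in (t_i, t_{i+1})$ for $n$ large, so $\1_{\{y_n \notin E_a\}}(x) = 1$ and $g_n(x) = a_i'(y_n(x)) - a_i'(y(x)) \to 0$ by continuity of $a_i'$.

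The one step requiring an extra ingredient — and, I expect, the crux of the lemma — is the set $\{y \in E_a\}$, where the indicator $\1_{\{y_n \notin E_a\}}$ need not converge at all. Here I would appeal to the classical fact that a Sobolev function has vanishing gradient almost everywhere on each of its level sets, so that $\nabla y = 0$ a.e.\ on $\{y = t_i\}$ for every $i \in \{1, \ldots, \barK\}$. Since $E_a$ is finite by \cref{ass:PC1-func}, the set $\{y \in E_a\}$ is a finite union of such level sets, hence $\nabla y = 0$ a.e.\ on it, and therefore $g_n(x)\nabla y(x) = 0$ for a.e.\ $x \in \{y \in E_a\}$, independently of $n$. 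Combining the two cases, $g_n \nabla y \to 0$ a.e.\ in $\Omega$, and dominated convergence gives $g_n \nabla y \to 0$ in $L^p(\Omega)$, which together with the estimate for the first summand yields the assertion.

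In short, the whole difficulty is localized to $\{y \in E_a\}$: this is precisely where the jump of $a'$ could break the convergence, but it is exactly where the accompanying factor $\nabla y$ vanishes. The finiteness of $E_a$ is used twice — to make $a'$ bounded on bounded sets and to reduce $\{y \in E_a\}$ to finitely many level sets — so the argument would not go through for merely countable $E_a$ without additional structure.
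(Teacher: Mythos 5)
Your proof is correct and uses exactly the same key ingredients as the paper's: the uniform convergence $y_n \to y$ in $C(\overline\Omega)$ from the Sobolev embedding, the boundedness of $a'$ on bounded sets away from $E_a$, the fact that $\nabla y$ vanishes almost everywhere on the level sets $\{y = t_i\}$, and dominated convergence. The paper merely organizes the decomposition differently (splitting by the selection index $i$ before separating the indicator, coefficient, and gradient differences), but the substance is identical, including the correct identification of $\{y \in E_a\}$ as the only delicate set.
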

\begin{proof}
    Since $y_n \to y$ in $W^{1,p}_0(\Omega)$, we have $y_n \to y$ in $C(\overline\Omega)$. Also, we can assume that
    $\norm{y_n - y}_{C(\overline\Omega)} < \delta$ for all $n \in\N$ large enough.
    Setting $A_n := \1_{\{y_n \notin E_{a} \}} a'(y_n) \nabla y_n  - \1_{\{y \notin E_{a} \}} a'(y) \nabla y $, we have 
    \begin{equation*}
        A_n = \sum_{i = 0}^{\barK} A_n^{(i)} \quad\text{with}\quad
        A_n^{(i)} := \1_{\Omega_{y_n, i, i+1 }^{(0,0)}}a_i'(y_n) \nabla y_n - \1_{\Omega_{y, i, i+1 }^{(0,0)}}a_i'(y) \nabla y.
    \end{equation*}
    It is sufficient to prove that $A_n^{(i)} \to 0$ in $L^p(\Omega)$ for all $0 \leq i \leq \barK$. To this end, we write $A_n^{(i)}$ as
    \begin{equation*}
        A_n^{(i)} = \left[\1_{\Omega_{y_n, i, i+1 }^{(0,0)}} - \1_{\Omega_{y, i, i+1 }^{(0,0)}} \right]  a_i'(y) \nabla y
        + \1_{\Omega_{y_n, i, i+1 }^{(0,0)}}\left[a_i'(y_n)- a_i'(y) \right] \nabla y + \1_{\Omega_{y_n, i, i+1 }^{(0,0)}}a_i'(y_n) \left[ \nabla y_n - \nabla y \right].
    \end{equation*}
    The last two terms in the right-hand side tend to $0$ in $L^p(\Omega)$. For the first term in the right-hand side, we have
    \begin{multline*}
        \left[\1_{\Omega_{y_n, i, i+1 }^{(0,0)}} - \1_{\Omega_{y, i, i+1 }^{(0,0)}} \right]  a_i'(y) \nabla y = \1_{\Omega_{y_n, i, i+1}^{(0,0)} \cap \left[\{y= t_i\} \cup \{y = t_{i+1}\} \right] } a_i'(y) \nabla y  \\
        \begin{aligned}
            & + \1_{ \Omega_{y_n, i, i}^{(0,\delta)} \cap \Omega_{y, i, i }^{(-\delta, 0)} }   a_i'(y) \nabla y 
            + \1_{ \Omega_{y_n, i+1, i+1}^{(-\delta,0)} \cap \Omega_{y, i+1, i+1}^{(0, \delta)} }   a_i'(y) \nabla y \\
            &- \1_{ \Omega_{y_n, i, i}^{(-\delta,0]} \cap \Omega_{y, i, i}^{(0, \delta)} }   a_i'(y) \nabla y - \1_{ \Omega_{y_n, i+1, i+1}^{[0, \delta)} \cap \Omega_{y, i+1, i+1}^{(-\delta, 0)} }   a_i'(y) \nabla y.
        \end{aligned}
    \end{multline*}
    In the above expression, the first term in the right-hand side vanishes almost everywhere since $\nabla y(x) = 0$ for almost every $x\in\{y = t_i\} \cup \{y =t_{i+1} \}$ (see, e.g., \cite[Rem.~2.6]{Chipot2009} and \cite[Prop.~5.8.2]{Attouch2006}), and the other terms tend to zero in $L^p(\Omega)$ according to the dominated convergence theorem. Thus, 
    \begin{equation*}
        \left[\1_{\Omega_{y_n, i, i+1 }^{(0,0)}} - \1_{\Omega_{y, i, i+1 }^{(0,0)}} \right]  a_i'(y) \nabla y \to 0 \qquad\text{in } L^p(\Omega)
    \end{equation*}
    and hence $A_n^{(i)} \to 0$ in $L^p(\Omega)$.
\end{proof}

As seen in the proof of \cref{lem:smooth-der}, the fact that for $y \in W^{1,p}(\Omega)$, $p\geq 1$, the gradient $\nabla y$ vanishes almost everywhere on $\{y \in E_{a} \}$, plays an important role.
Furthermore, this fact also guarantees that
\begin{equation*}
    [a(y+z) - a(y) - a'(y;z)]\nabla y = T_{y+z,y} \nabla y   
\end{equation*}
for $y, z \in W^{1,p}_0(\Omega)$, which will be crucial for proving continuous differentiability of the control-to-state mapping $S$. We will also use this to show a key limit in \cref{lem:limits}\,(iii) below. 
We point out that do \emph{not} need to assume that the set $\{y\in E_{a}\}$ has small or even zero Lebesgue measure.
\begin{theorem}
    \label{thm:diff}
    Under \crefrange{ass:domain}{ass:PC1-func}, the control-to-state operator $S: W^{-1,p}(\Omega) \to W^{1,p}_0(\Omega)$, $p >N$, is Fr\'{e}chet differentiable. Moreover, for any $u,v \in W^{-1,p}(\Omega)$, $z_v:= S'(u)v$ satisfies
    \begin{equation} \label{eq:deri}
        \left\{
            \begin{aligned}
                -\div [(b + a(y_u)) \nabla z_v + \1_{\{ y_u \notin E_{a} \}}a'(y_u) z_v\nabla y_u ]& = v && \text{in } \Omega, \\
                z_v &=0 && \text{on } \partial\Omega,
            \end{aligned}
        \right.
    \end{equation}
    with $y_u := S(u)$.
    Furthermore, the mapping $W^{-1,p}(\Omega) \ni u\mapsto S'(u) \in W^{1,p}_0(\Omega)$ is continuous.
\end{theorem}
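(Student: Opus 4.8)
The plan is to apply the implicit function theorem for quasilinear equations from \cite[Thm.~2.1]{Wachsmuth2014}, exactly as announced in the text. First I would set up the framework: define the map $F: W^{-1,p}(\Omega) \times W^{1,p}_0(\Omega) \to W^{-1,p}(\Omega)$ by $F(u,y) := -\div[(b+a(y))\nabla y] - u$, so that $F(u, S(u)) = 0$. The two hypotheses to check for the implicit function theorem are (i) that for fixed $u$ the partial map $y \mapsto F(u,y)$ is Fréchet differentiable at $y_u$ with derivative the linear elliptic operator $z \mapsto -\div[(b+a(y_u))\nabla z + \1_{\{y_u \notin E_a\}} a'(y_u) z \nabla y_u]$, and (ii) that this linearized operator is a topological isomorphism from $W^{1,p}_0(\Omega)$ onto $W^{-1,p}(\Omega)$. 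The isomorphism property follows from the unique-solvability and a priori estimates for linear equations of the type $-\div[\mathbb{a}\nabla z + \mathbb{b} z] = v$ with $\mathbb{a} \in C(\overline\Omega)$ bounded below by $\underline b$ and $\mathbb{b} \in L^p(\Omega)^N$ — this is precisely the kind of equation analyzed in \cref{lem:Lip} (see \eqref{eq:xi-auxi}) and covered by \cite[Thm.~2.6]{CasasDhamo2011} for uniqueness together with \cite[Cor.~1]{Fromm1993} (via a Kirchhoff-type reduction or a fixed-point/Neumann-series argument) for the $W^{1,p}$ estimate; I would state this as a short auxiliary claim or cite the earlier machinery. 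With (i) and (ii) in hand, the abstract theorem delivers Fréchet differentiability of $S$ and the formula \eqref{eq:deri} by implicitly differentiating $F(u,S(u))=0$.

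The crux — and the main obstacle — is verifying (i): that $y \mapsto -\div[(b+a(y))\nabla y]$ is Fréchet differentiable at $y_u$ \emph{despite $a$ being non-smooth}. Writing $y = y_u + z$ with $z \in W^{1,p}_0(\Omega)$ small, the remainder after subtracting the proposed linearization is, in $W^{-1,p}(\Omega)$, the divergence of
\begin{equation*}
    \bigl[a(y_u+z) - a(y_u)\bigr]\nabla z \;+\; \bigl[a(y_u+z) - a(y_u) - \1_{\{y_u\notin E_a\}}a'(y_u)z\bigr]\nabla y_u.
\end{equation*}
The first summand is $o(\norm{z})$ in $L^p$ because $a$ is Lipschitz on bounded sets (so its $L^\infty$-norm is $O(\norm{z}_{C(\overline\Omega)}) = O(\norm{z}_{W^{1,p}_0})$) times $\norm{\nabla z}_{L^p} = O(\norm{z}_{W^{1,p}_0})$, giving a genuine $O(\norm{z}^2)$. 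The second summand is the term $T_{y_u+z,\,y_u}\nabla y_u$ from \eqref{eq:T-func} — here I use the pointwise identity $[a(y_u+z) - a(y_u) - a'(y_u;z)]\nabla y_u = T_{y_u+z,y_u}\nabla y_u$ noted just before the theorem statement (valid because $\nabla y_u$ vanishes a.e.\ on $\{y_u \in E_a\}$, and because $a'(y_u;z)$ agrees with $\1_{\{y_u\notin E_a\}}a'(y_u)z$ away from $E_a$) — and \cref{lem:T-decomposition}, specifically the limit \eqref{eq:T-limit}, gives exactly $\norm{T_{y_u+z,y_u}\nabla y_u}_{L^p(\Omega)} = o(\norm{z}_{W^{1,p}_0(\Omega)})$. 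Since the divergence operator maps $L^p(\Omega)^N$ boundedly into $W^{-1,p}(\Omega)$, the remainder is $o(\norm{z})$ in $W^{-1,p}(\Omega)$, establishing Fréchet differentiability of $F(u,\cdot)$ at $y_u$.

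Finally, for the continuity of $u \mapsto S'(u)$: if $u_n \to u$ in $W^{-1,p}(\Omega)$, then $y_{u_n} \to y_u$ in $W^{1,p}_0(\Omega)$ by \cref{lem:Lip}. Given $v$, let $z_n := S'(u_n)v$ and $z := S'(u)v$; subtracting the two copies of \eqref{eq:deri} and testing the difference, the convergences $b + a(y_{u_n}) \to b + a(y_u)$ in $C(\overline\Omega)$ (Lipschitz continuity of $a$ on bounded sets plus uniform convergence of the states) and $\1_{\{y_{u_n}\notin E_a\}}a'(y_{u_n})\nabla y_{u_n} \to \1_{\{y_u\notin E_a\}}a'(y_u)\nabla y_u$ in $L^p(\Omega)^N$ (this is exactly \cref{lem:smooth-der}) let me pass to the limit and, using the uniform coercivity from $\underline b > 0$ together with the $W^{1,p}$ a priori bound for the linearized equation, conclude $z_n \to z$ in $W^{1,p}_0(\Omega)$ with a rate uniform in $\norm{v}_{W^{-1,p}(\Omega)} \le 1$; this gives $\norm{S'(u_n) - S'(u)}_{\mathcal L(W^{-1,p},\,W^{1,p}_0)} \to 0$. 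The one point requiring a little care here is extracting a \emph{norm} (operator-norm) convergence rather than merely pointwise-in-$v$ convergence, which I would handle by keeping all estimates explicitly linear in $\norm{v}$ throughout, or by a standard contradiction/subsequence argument against a normalized sequence $v_n$.
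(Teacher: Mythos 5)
Your proposal is correct and follows essentially the same route as the paper: Wachsmuth's implicit function theorem, Fréchet differentiability of $y\mapsto F(y,u)$ via the identity $[a(y_u+z)-a(y_u)-a'(y_u;z)]\nabla y_u = T_{y_u+z,y_u}\nabla y_u$ and the limit \eqref{eq:T-limit}, the isomorphism property of the linearized operator, and continuity of $S'$ via \cref{lem:smooth-der} with estimates kept uniform in $\norm{v}_{W^{-1,p}(\Omega)}\leq 1$. The only place where the paper does substantially more work than your sketch is the $W^{1,p}$ a priori estimate for the linearized equation, which proceeds exactly by the Kirchhoff-type substitution $\hat z_v=(b+a(y_u))z_v$ you mention but additionally requires an intermediate Stampacchia $L^\infty$ bound, a case distinction between $N=2$ and $N=3$, and a bootstrap for $p>6$.
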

\begin{proof}
    \emph{Step 1: existence of $S'$.}
    We shall apply the differentiability of the implicit mapping \cite[Thm.~2.1]{Wachsmuth2014}. Consider the mapping $F: W^{1,p}_0(\Omega) \times W^{-1,p}(\Omega) \to W^{-1,p}(\Omega)$ defined by
    \begin{equation*}
        F(y,u) := - \div \left[(b + a(y)) \nabla y \right] - u.
    \end{equation*}
    We first show that $F$ has a partial derivative in $y$ given by
    \begin{equation*}
        \frac{\partial F}{\partial y}(y,u)z = - \div \left[(b + a(y)) \nabla z + \1_{\{ y \notin E_{a} \}}a'(y) z\nabla y \right]
    \end{equation*}
    for all $y, z \in W^{1,p}_0(\Omega)$ and $u \in W^{-1,p}(\Omega)$. To this end, let $z_n$ be an arbitrary sequence converging to zero in $W^{1,p}_0(\Omega)$ and let $\varphi$ be arbitrary in $W^{1,p'}_0(\Omega)$, $p' := p/(p-1)$, with $\norm{\varphi}_{W^{1,p'}_0(\Omega)} \leq 1$. Setting $y_n := y + z_n$, we then have
    \begin{multline*}
        \left\langle F(y_n,u) - F(y,u) -  \frac{\partial F}{\partial y}(y,u)z_n, \varphi \right \rangle_{W^{-1,p}(\Omega), W^{1,p'}_0(\Omega) }\\
        \begin{aligned}
            & = \int_\Omega \left[(b+ a(y_n)) \nabla y_n - (b + a(y))\nabla y\right. \\
            \MoveEqLeft[-3] \left.- (b+ a(y)) \nabla z_n - a'(y;z_n) \nabla y \right]\cdot \nabla \varphi \dx\\
            & = \int_\Omega \left[(a(y_n)-a(y) ) \nabla z_n + \left( a(y_n) - a(y) -  a'(y;z_n) \right) \nabla y \right]\cdot \nabla \varphi \dx \\
            & \leq \norm{(a(y_n)-a(y) ) \nabla z_n + \left( a(y_n) - a(y) -  a'(y;z_n) \right) \nabla y}_{L^p(\Omega)} \\
            & \leq \norm{a(y_n) - a(y)}_{L^\infty(\Omega)} \norm{\nabla z_n}_{L^p(\Omega)} + \norm{\left( a(y_n) - a(y) -  a'(y;z_n) \right) \nabla y}_{L^p(\Omega)} \\
            & = \norm{a(y_n) - a(y)}_{L^\infty(\Omega)} \norm{\nabla z_n}_{L^p(\Omega)} + \norm{ T_{y_n, y} \nabla y}_{L^p(\Omega)}.
        \end{aligned}
    \end{multline*}
    Setting $\epsilon_n := \norm{z_n}_{W^{1,p}_0(\Omega)}$, we obtain
    \begin{equation*}
        \frac{1}{\norm{z_n}_{W^{1,p}_0(\Omega)}} \norm{F(y_n,u) - F(y,u) -  \frac{\partial F}{\partial y}(y,u)z_n}_{W^{-1,p}(\Omega)}  
        \leq \norm{a(y_n) - a(y)}_{L^\infty(\Omega)} + \frac{1}{\epsilon_n} \norm{ T_{y_n, y} \nabla y}_{L^p(\Omega)}.
    \end{equation*}
    Obviously, $\norm{a(y_n) - a(y)}_{L^\infty(\Omega)} \to 0$. Combining this with \eqref{eq:T-limit} yields the partial differentiability of $F$ in $y$. 

    We now prove that $\frac{\partial F}{\partial y}(y_u,u)$, $y_u := S(u)$, is an isomorphism as a mapping from $W^{1,p}_0(\Omega)$ to $W^{-1,p}(\Omega)$. From this, the Lipschitz continuity of $S$ (see \cref{lem:Lip}), and \cite[Thm.~2.1]{Wachsmuth2014}, we then deduce the existence of Fr\'{e}chet derivative at $u$ of $S$ as well as \eqref{eq:deri}. It is enough to prove for any $v \in W^{-1,p}(\Omega)$ that there exists a unique $z_v \in W^{1,p}_0(\Omega)$ such that
    \begin{equation}
        \label{eq:F-isomorphism}
        \frac{\partial F}{\partial y}(y_u,u) z_v = v
    \end{equation} 
    and 
    \begin{equation}
        \label{eq:F-iso-esti}
        \norm{z_v}_{W^{1,p}_0(\Omega)} \leq C_u \norm{v}_{W^{-1,p}(\Omega)}.
    \end{equation}
    Setting $a_u(x) := b(x) + a(y_u(x))$ and $\mathbb{c}_u(x) := \1_{\{ y_u \notin E_{a}\} }(x) a'(y_u(x)) \nabla y_u(x)$ for almost every $x \in \Omega$, we have that $a_u$ is continuous and there hold
    \begin{equation}
        \label{eq:coef-cond}
        \underline{b} \leq a_u(x) \leq C_u, \quad \norm{\mathbb{c}_u }_{L^p(\Omega)} \leq C_u \quad \text{for a.e. } x \in \Omega.
    \end{equation}
    The equation \eqref{eq:F-isomorphism} can thus be written as
    \begin{equation} 
        \label{eq:F-isomorphism2}
        \left\{
            \begin{aligned}
                -\div [a_{u} \nabla z_v + \mathbb{c}_u z_v ]& = v && \text{in } \Omega, \\
                z_v &=0 && \text{on } \partial\Omega.
            \end{aligned}
        \right.
    \end{equation} 
    Due to \cite[Thm.~2.6]{CasasDhamo2011}, \eqref{eq:F-isomorphism2} (and thus \eqref{eq:F-isomorphism}) has a unique solution $z_v \in H^1_0(\Omega) \hookrightarrow L^{2p/(p-2)}(\Omega)$. 
    Similar to \eqref{eq:w-esti} and \eqref{eq:H1-esti}, there hold that
    \begin{align} \label{eq:zv-esti1}
        \norm{z_v}_{L^{2p/(p-2)}(\Omega)} &\leq C_{1,u} \norm{v}_{W^{-1,p}(\Omega)}\\
        \intertext{and}
        \underline{b}\norm{\nabla z_v}_{L^{2}(\Omega)} &\leq C_{1,u} \left[ \norm{v}_{W^{-1,p}(\Omega)}  +  \norm{z_v}_{L^{2p/(p-2)}(\Omega)} \right]\notag
    \end{align}
    for some constant $C_{1,u}$ independent of $v$. It then follows that
    \begin{equation} \label{eq:zv-esti-H1}
        \norm{\nabla z_v}_{L^{2}(\Omega)} \leq C_{2,u}\norm{v}_{W^{-1,p}(\Omega)}.
    \end{equation}
    Moreover, by using the chain rule and the product formula \cite[Chap.~7]{Gilbarg_Trudinger} as well as  \cref{ass:PC1-func}, we can rewrite \eqref{eq:F-isomorphism2} as
    \begin{equation} 
        \label{eq:F-isomorphism3}
        \left\{
            \begin{aligned}
                -\Delta \hat z_v& = v - \div [z_v\nabla b] && \text{in } \Omega, \\
                z_v &=0 && \text{on } \partial\Omega,
            \end{aligned}
        \right.
    \end{equation}
    with $\hat z_v := (b + a(y_u))z_v$. We now show that
    \begin{equation}\label{eq:zv-Linfty-esti}
        \norm{\hat z_v}_{L^\infty(\Omega)}  \leq C_{\Omega,p}\norm{v}_{W^{-1,p}(\Omega)}.
    \end{equation}
    To this end, we first consider the case $p \geq 6$. We then have $v \in W^{-1,p}(\Omega) \hookrightarrow W^{-1,6}(\Omega)$. Besides, we have $z_v \in H^1_0(\Omega) \hookrightarrow L^6(\Omega)$ and thus $z_v\nabla b \in L^{6}(\Omega)^N$. It follows that the right-hand side of \eqref{eq:F-isomorphism3} belongs to $W^{-1,6}(\Omega)$. Applying the Stampacchia Theorem \cite[Thm.~12.4]{Chipot2009} and using the continuous embedding $W^{-1,p}(\Omega) \hookrightarrow W^{-1,6}(\Omega)$, we can conclude that
    \begin{equation*}
        \begin{aligned}[t]
            \norm{\hat z_v}_{L^\infty(\Omega)} & \leq C_{\Omega}\left[\norm{v}_{W^{-1,6}(\Omega)} + \norm{z_v\nabla b}_{L^6(\Omega)} \right] \\
            & \leq C_{\Omega,p}\left[\norm{v}_{W^{-1,p}(\Omega)} + \norm{\nabla b}_{L^\infty(\Omega)} \norm{z_v}_{H^1_0(\Omega)} \right] \\
            & \leq C_{\Omega,p}\norm{v}_{W^{-1,p}(\Omega)}, 
        \end{aligned}
    \end{equation*}
    where we have used \eqref{eq:zv-esti-H1} to obtain the last estimate. For the case $N < p < 6$, we see that the right-hand side of \eqref{eq:F-isomorphism3} belongs to $W^{-1,p}(\Omega)$. Using the Stampacchia Theorem again and exploiting the embedding $H^1_0(\Omega) \hookrightarrow L^{6}(\Omega) \hookrightarrow L^{p}(\Omega)$ and \eqref{eq:zv-esti-H1} yield
    \begin{equation*}
        \begin{aligned}[t]
            \norm{\hat z_v}_{L^\infty(\Omega)} & \leq C_{\Omega,p}\left[\norm{v}_{W^{-1,p}(\Omega)} + \norm{z_v\nabla b}_{L^p(\Omega)} \right] \\
            & \leq C_{\Omega,p}\left[\norm{v}_{W^{-1,p}(\Omega)} + \norm{z_v\nabla b}_{L^6(\Omega)} \right] \\
            & \leq C_{\Omega,p}\norm{v}_{W^{-1,p}(\Omega)}.
        \end{aligned}
    \end{equation*}        
    We now consider two cases:
    \begin{enumerate}[label={Case }\arabic*:, align=left]
        \item $N =2$. Since the embedding $H^1_0(\Omega) \hookrightarrow L^p(\Omega)$ is continuous, we have    
            $v - \div [z_v\nabla b] \in W^{-1,p}(\Omega)$. Applying \cite[Cor.~1]{Fromm1993} to \eqref{eq:F-isomorphism3} and exploiting \eqref{eq:zv-esti-H1} yields that $\hat z_v \in W^{1,p}_0(\Omega)$ and that
            \begin{equation*}
                \norm{\hat z_v}_{W^{1,p}_0(\Omega)} \leq C_{\Omega, p, N, u}  \norm{v}_{W^{-1,p}(\Omega)}. 
            \end{equation*}
            From this, the definition of $\hat z_v$, and \eqref{eq:zv-Linfty-esti}, we derive $z_v \in W^{1,p}_0(\Omega)$ and \eqref{eq:F-iso-esti}.

        \item $N=3$. In this case, we have $H^1_0(\Omega) \hookrightarrow L^6(\Omega)$. If $p \leq 6$, we then have the desired conclusion using a similar argument as in the first case. If $p >6$, then $v - \div [z_v\nabla b] \in W^{-1,6}(\Omega)$. Similar to the first case, $z_v \in W^{1,6}_0(\Omega)$ and $\norm{z_v}_{W^{1,6}_0(\Omega)} \leq C_u \norm{v}_{W^{-1,p}(\Omega)}$. Finally, by using the continuous embedding $W^{1,6}_0(\Omega) \hookrightarrow L^p(\Omega)$ and a bootstrapping argument, we arrive at the desired conclusion.
    \end{enumerate}

    \medskip 

    \emph{Step 2. continuity of $S'$.} Taking any $v, u_n, u_0 \in W^{-1,p}(\Omega)$ such that $\norm{v}_{W^{-1,p}(\Omega)} \leq 1$ as well as $s_n := \norm{u_n - u_0}_{W^{-1,p}(\Omega)} \to 0$ as $n \to \infty$ and setting $z_{vn}:= S'(u_n)v, z_{v} := S'(u_0)v$, we see that $z_{vn}$ and $z_v$ satisfy
    \begin{equation*} 
        \label{eq:S-smooth}
        \left\{
            \begin{aligned}
                -\div [(b + a(y_0)) \nabla (z_{vn} - z_v)  + \1_{\{y_0 \notin E_{a} \}} a'(y_0) \nabla y_0 (z_{vn} - z_v) ]& = \div g_{vn} && \text{in } \Omega, \\
                z_{vn} - z_v &=0 && \text{on } \partial\Omega,
            \end{aligned}
        \right.
    \end{equation*}
    with $y_0 := S(u_0)$, $y_n := S(u_n)$, and 
    \begin{equation*}
        g_{vn} := \left[ \nabla z_{vn} (a(y_n) - a(y_0)) + \left( \1_{\{y_n \notin E_{a} \}} a'(y_n) \nabla y_n  - \1_{\{y_0 \notin E_{a} \}} a'(y_0) \nabla y_0  \right)z_{vn} \right].
    \end{equation*}
    Similar to \eqref{eq:F-iso-esti}, a constant $C_1 := C_{1u_0}$ exists such that
    \begin{equation}
        \label{eq:subs-esti}
        \norm{z_{vn} - z_v}_{W^{1,p}_0(\Omega)} \leq C_1 \norm{g_{vn}}_{L^p(\Omega)}.
    \end{equation}
    Furthermore, the local Lipschitz continuity of $S$ (\cref{lem:Lip}) implies that there is a  constant $C_{2} = C_{2}(u_0)$ such that
    \begin{equation*}
        \norm{z_{vn}}_{W^{1,p}_0(\Omega)}, \norm{z_{v}}_{W^{1,p}_0(\Omega)} \leq C_{2}\norm{v}_{W^{-1,p}(\Omega)} \leq C_2 \quad \text{for all } n \in\N.
    \end{equation*}
    This implies that
    \begin{equation}
        \label{eq:g-esti}
        \begin{aligned}[t]
            \norm{g_{vn}}_{L^p(\Omega)} & \leq C_2 \norm{a(y_n) - a(y_0)}_{L^\infty(\Omega)} + \norm{z_{vn}}_{L^\infty(\Omega)} \norm{A_n}_{L^p(\Omega)}\\
            & \leq C_2 \left( \norm{a(y_n) - a(y_0)}_{L^\infty(\Omega)} + C \norm{A_n}_{L^p(\Omega)} \right),
        \end{aligned}
    \end{equation}
    where we have used the continuous embedding $W^{1,p}_0(\Omega) \hookrightarrow L^\infty(\Omega)$ to obtain the last inequality with a constant $C$ independent of $v$ and $n$. Here
    \begin{equation*}
        A_n:= \1_{\{y_n \notin E_{a} \}} a'(y_n) \nabla y_n  - \1_{\{y_0 \notin E_{a} \}} a'(y_0) \nabla y_0 
    \end{equation*}
    does not depend on $v$. Combining \eqref{eq:subs-esti} with \eqref{eq:g-esti} yields
    \begin{equation*}
        \norm{S'(u_n) - S'(u_0)}_{\Linop(W^{-1,p}(\Omega), W^{1,p}_0(\Omega))} \leq C_3 \left[ \norm{a(y_n) - a(y_0)}_{L^\infty(\Omega)} + \norm{A_n}_{L^p(\Omega)} \right].
    \end{equation*}
    Obviously, the first term in the right-hand side converges to zero since $y_n \to y_0$ in $C(\overline\Omega)$. In addition, as a result of \cref{lem:smooth-der}, the second term tends to zero. 
\end{proof}

We end this section with a direct consequence of the differentiability of $S$. 
\begin{corollary}
    \label{cor:diff}
    Let $u$ and $h$ be arbitrary in $L^2(\Omega)$ and let $\{s_n\} \subset (0, \infty)$ and $\{h_n\} \subset L^2(\Omega)$ be such that $s_n \to 0^+$ and $h_n \rightharpoonup h$ in $L^2(\Omega)$.
    Then, for any $p \in (N, 6)$, there holds
    \begin{equation*}
        \frac{S(u + s_nh_n) - S(u)}{s_n} \to S'(u)h \quad \text{in } W^{1,p}_0(\Omega) \hookrightarrow C(\overline\Omega).
    \end{equation*}
\end{corollary}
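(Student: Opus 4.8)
The plan is to set $y_0 := S(u)$, $y_n := S(u + s_n h_n)$ and $v_n := s_n^{-1}(y_n - y_0)$, and to prove directly that $v_n \to v := S'(u)h$ in $W^{1,p}_0(\Omega)$; the convergence in $C(\overline\Omega)$ then follows from $W^{1,p}_0(\Omega) \hookrightarrow C(\overline\Omega)$. As preliminaries I would note that $\{h_n\}$ is bounded in $L^2(\Omega)$ (being weakly convergent), so $\norm{s_n h_n}_{L^2(\Omega)} \to 0$; since $p < 6$ gives the continuous embedding $L^2(\Omega) \hookrightarrow W^{-1,p}(\Omega)$, it follows that $u + s_n h_n \to u$ in $W^{-1,p}(\Omega)$, whence $y_n \to y_0$ in $W^{1,p}_0(\Omega)$ (and in $C(\overline\Omega)$) by \cref{lem:Lip}, and also $h_n, h \in W^{-1,p}(\Omega)$, so that $v = S'(u)h$ is well defined and solves \eqref{eq:deri} by \cref{thm:diff}.

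Subtracting the state equations for $y_n$ and $y_0$, using the splitting $(b+a(y_n))\nabla y_n - (b+a(y_0))\nabla y_0 = (b+a(y_n))\nabla(y_n-y_0) + (a(y_n)-a(y_0))\nabla y_0$, and dividing by $s_n$, the only delicate term is $s_n^{-1}(a(y_n)-a(y_0))\nabla y_0$. I would rewrite it via the identity $[a(y_0+w)-a(y_0)]\nabla y_0 = a'(y_0;w)\nabla y_0 + T_{y_0+w,y_0}\nabla y_0$ recorded before \cref{thm:diff}, combined with the vanishing of $\nabla y_0$ a.e.\ on $\{y_0 \in E_a\}$ and the positive homogeneity of $w \mapsto a'(y_0;w)$: with $w = s_n v_n$ this term becomes $\1_{\{y_0 \notin E_a\}} a'(y_0) v_n \nabla y_0 + r_n$, where $r_n := s_n^{-1} T_{y_n,y_0}\nabla y_0$. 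Hence $v_n$ solves the \emph{linear} problem
\[
    -\div\left[(b+a(y_n))\nabla v_n + \1_{\{y_0\notin E_a\}} a'(y_0) v_n \nabla y_0 + r_n\right] = h_n \ \text{ in } \Omega, \qquad v_n = 0 \ \text{ on } \partial\Omega,
\]
and $\norm{r_n}_{L^p(\Omega)} = \norm{v_n}_{W^{1,p}_0(\Omega)}\, c_n$ with $c_n := \norm{T_{y_n,y_0}\nabla y_0}_{L^p(\Omega)} / \norm{y_n-y_0}_{W^{1,p}_0(\Omega)} \to 0$ by \eqref{eq:T-limit} (set $c_n := 0$ when $y_n = y_0$).

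I would then bound $\{v_n\}$ by reading this equation as $\frac{\partial F}{\partial y}(y_0,u) v_n = h_n + \div[(a(y_n)-a(y_0))\nabla v_n + r_n]$ and applying the isomorphism estimate \eqref{eq:F-iso-esti} established in the proof of \cref{thm:diff}; since $\norm{a(y_n)-a(y_0)}_{L^\infty(\Omega)} \to 0$ and $c_n \to 0$, for $n$ large the terms $\norm{a(y_n)-a(y_0)}_{L^\infty(\Omega)}\norm{\nabla v_n}_{L^p(\Omega)}$ and $\norm{r_n}_{L^p(\Omega)}$ absorb into the left-hand side, leaving $\norm{v_n}_{W^{1,p}_0(\Omega)} \le 2 C_u \norm{h_n}_{W^{-1,p}(\Omega)} \le C$; in particular $\norm{r_n}_{L^p(\Omega)} \to 0$. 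Finally, subtracting $\frac{\partial F}{\partial y}(y_0,u) v = h$ from the $v_n$-equation and using \eqref{eq:F-iso-esti} once more,
\[
    \norm{v_n - v}_{W^{1,p}_0(\Omega)} \le C_u\left( \norm{h_n - h}_{W^{-1,p}(\Omega)} + \norm{a(y_n)-a(y_0)}_{L^\infty(\Omega)}\norm{\nabla v_n}_{L^p(\Omega)} + \norm{r_n}_{L^p(\Omega)} \right).
\]
The last two terms vanish as $n \to \infty$ by the preceding step. The remaining term is the point I expect to be the main obstacle: one must upgrade $h_n \rightharpoonup h$ in $L^2(\Omega)$ to strong convergence $h_n \to h$ in $W^{-1,p}(\Omega)$. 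This is exactly where $p < 6$ is needed, since it makes $W^{1,p'}_0(\Omega) \Subset L^2(\Omega)$ ($p' := p/(p-1)$) compact by Rellich--Kondrachov, hence its adjoint $L^2(\Omega) \Subset W^{-1,p}(\Omega)$ compact, so that weak $L^2$-convergence implies strong $W^{-1,p}$-convergence. Therefore $v_n \to v$ in $W^{1,p}_0(\Omega)$, as claimed.
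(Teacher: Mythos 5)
Your argument is correct, but it takes a substantially longer route than the paper. The paper's proof is essentially two lines: it writes
\begin{equation*}
    \frac{S(u + s_nh_n) - S(u)}{s_n} - S'(u)h = \frac{S(u + s_nh_n) - S(u) -s_n S'(u)h_n}{s_n} + S'(u)(h_n-h),
\end{equation*}
observes that the first term is the Fréchet remainder of \cref{thm:diff} at $u$ in the direction $w_n := s_nh_n$, which is $o(\norm{w_n}_{W^{-1,p}(\Omega)})/s_n \to 0$ because $\norm{w_n}_{W^{-1,p}(\Omega)} \leq C s_n$ (weakly convergent sequences being bounded), and kills the second term exactly as you do, via the compactness of $L^2(\Omega) \Subset W^{-1,p}(\Omega)$ for $p<6$ together with the boundedness of the linear operator $S'(u)$. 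You instead re-derive the convergence of the difference quotient at the PDE level: subtracting the state equations, extracting the linearized operator $\frac{\partial F}{\partial y}(y_0,u)$ plus a remainder $r_n$ controlled by \eqref{eq:T-limit}, obtaining a uniform bound on $v_n$ by absorption via \eqref{eq:F-iso-esti}, and then passing to the limit. This is a valid and self-contained verification (all the ingredients you invoke --- the identity $[a(y_0+z)-a(y_0)-a'(y_0;z)]\nabla y_0 = T_{y_0+z,y_0}\nabla y_0$, the limit \eqref{eq:T-limit}, and the isomorphism estimate \eqref{eq:F-iso-esti} --- are available at this point in the paper), but it essentially repeats Step 1 of the proof of \cref{thm:diff} in a slightly perturbed form. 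What your version buys is a quantitative error bound for $\norm{v_n - S'(u)h}_{W^{1,p}_0(\Omega)}$ in terms of $\norm{h_n-h}_{W^{-1,p}(\Omega)}$, $\norm{a(y_n)-a(y_0)}_{L^\infty(\Omega)}$ and $c_n$; what it costs is that it duplicates work the paper has already packaged into the statement "$S$ is Fréchet differentiable." You correctly identified the one genuinely essential point, namely that $p<6$ is what upgrades $h_n \rightharpoonup h$ in $L^2(\Omega)$ to $h_n \to h$ in $W^{-1,p}(\Omega)$; this is also the crux of the paper's argument.
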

\begin{proof}
    We write
    \begin{equation*}
        \frac{S(u + s_nh_n) - S(u)}{s_n} - S'(u)h = \frac{S(u + s_nh_n) - S(u) -s_n S'(u)h_n}{s_n} + S'(u)(h_n-h).
    \end{equation*}
    From this, \cref{thm:diff}, and the compact embedding  $L^2(\Omega) \Subset W^{-1,p}(\Omega)$ and the continuous embedding $W^{1,p}_0(\Omega) \hookrightarrow C(\overline\Omega)$, we derive the desired conclusion.
\end{proof}

\begin{remark}
    All results in this and the following sections can be extended to the case where an additional semilinear term $f(x,y(x))$ is present if $f$ is a Carathéodory function and for almost every $x\in \Omega$, the mapping $z\mapsto f(x,z)$ is monotone and of class $C^2$ as in, e.g., \cite{CasasTroltzsch2009}. However, in order to avoid additional technicalities, we have restricted the discussion to the case \eqref{eq:state}.
    Similarly, in assumption \ref{ass:domain}, the convexity of $\Omega$ was only used to establish the $H^2$- and $W^{1,\infty}$-regularity of solutions of elliptic equations. The results therefore remain valid for other domains guaranteeing that regularity, e.g., if $\Omega$ is of class $C^{1,1}$.
\end{remark}

\section{Existence and first-order optimality conditions} \label{sec:existence-1stOS}

The optimal control problem \eqref{eq:P} can be rewritten in the form
\begin{equation}\label{eq:P2}
    \tag{P}
    \left\{
        \begin{aligned}
            \min_{u\in L^\infty(\Omega)} & j(u) := J(S(u),u) = G(S(u)) + \frac\nu2\norm{u}_{L^2(\Omega)}^2 \\
            \text{s.t.} &  \quad u \in \mathcal{U}_{ad}, 
        \end{aligned}
    \right.
\end{equation}
where the admissible set is defined by 
\begin{equation}
    \label{eq:ad-set}
    \mathcal{U}_{ad} := \left\{u \in L^\infty(\Omega) \,\middle|\, \alpha(x) \leq u(x) \leq \beta(x) \quad \text{for a.e. } x \in \Omega  \right\}.
\end{equation}
The existence of a minimizer $\bar u\in \mathcal{U}_{ad}$ of \eqref{eq:P2} follows as in \cite[Thm.~3.1]{CasasTroltzsch2009}. 

To derive first-order necessary optimality conditions, we first address the existence, uniqueness, and regularity of solutions to the adjoint state equation
\begin{equation} \label{eq:adjoint-state}
    \left\{
        \begin{aligned}
            -\div [(b + a(y_u)) \nabla \varphi]  + \1_{\{ y_u \notin E_{a} \}}a'(y_u) \nabla y_u \cdot \nabla \varphi & = v && \text{in } \Omega, \\
            \varphi &=0 && \text{on } \partial\Omega,
        \end{aligned}
    \right.
\end{equation}
for $u \in W^{-1,p}(\Omega)$, $p >N$, $v\in H^{-1}(\Omega)$, and $y_u:= S(u)$.
\begin{lemma}
    \label{lem:adjoint-equation} 
    Let \crefrange{ass:domain}{ass:PC1-func} hold and $p>N$. Then, for any $u \in W^{-1,p}(\Omega)$ and $v \in H^{-1}(\Omega)$, there exists a unique $\varphi \in H^1_0(\Omega)$ solving \eqref{eq:adjoint-state}. If, in addition, $U$ is a bounded subset in $L^p(\Omega)$, then for any $u \in U$ and $v \in L^q(\Omega)$ with $q >N$, there exists a constant $C_U$ such that $\varphi \in H^2(\Omega) \cap W^{1,\infty}(\Omega)$ satisfies
    \begin{equation}
        \label{eq:apriori-adjoint2}
        \norm{\varphi}_{H^2(\Omega)} + \norm{\varphi}_{W^{1,\infty}(\Omega)} \leq C_{U} \norm{v}_{L^{q}(\Omega)}.
    \end{equation}
\end{lemma}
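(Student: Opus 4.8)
The plan is to treat the adjoint equation \eqref{eq:adjoint-state} as a linear elliptic problem in non-divergence-adjacent form and to handle it essentially by duality with the linearized state equation \eqref{eq:deri}, since the operator on the left of \eqref{eq:adjoint-state} is precisely the formal adjoint of the operator $\frac{\partial F}{\partial y}(y_u,u)$ studied in \cref{thm:diff}. Concretely, write $a_u(x) := b(x) + a(y_u(x))$ and $\mathbb{c}_u(x) := \1_{\{y_u \notin E_a\}} a'(y_u(x)) \nabla y_u(x)$; by \eqref{eq:coef-cond} we know $\underline b \le a_u \le C_u$ and $\norm{\mathbb{c}_u}_{L^p(\Omega)} \le C_u$. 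The weak form of \eqref{eq:adjoint-state} reads: find $\varphi \in H^1_0(\Omega)$ with
\begin{equation*}
    \int_\Omega a_u \nabla \varphi \cdot \nabla \psi \dx + \int_\Omega (\mathbb{c}_u \cdot \nabla \varphi)\, \psi \dx = \langle v, \psi\rangle \quad \text{for all } \psi \in H^1_0(\Omega),
\end{equation*}
which is exactly the transpose of the bilinear form appearing in \eqref{eq:F-isomorphism2}. Thus existence and uniqueness of $\varphi \in H^1_0(\Omega)$, together with the a priori bound $\norm{\varphi}_{H^1_0(\Omega)} \le C_u \norm{v}_{H^{-1}(\Omega)}$, follow from exactly the same Lax–Milgram / Stampacchia-type argument already invoked for \eqref{eq:F-isomorphism2} via \cite[Thm.~2.6]{CasasDhamo2011} (the lower-order term $\mathbb{c}_u \cdot \nabla\varphi$ with $\mathbb{c}_u \in L^p(\Omega)^N$, $p > N$, is subordinate to the principal part, so the standard theory applies to the adjoint as well; alternatively one simply quotes the cited result for the transposed operator). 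I would state this first step briefly and then concentrate on the regularity bound \eqref{eq:apriori-adjoint2}.

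For the second part, assume $U$ is bounded in $L^p(\Omega)$, $u \in U$, and $v \in L^q(\Omega)$ with $q > N$. The goal is the $H^2 \cap W^{1,\infty}$ estimate. The strategy mirrors the proof of \cref{thm:existence} and Step 1 of \cref{thm:diff}: first bootstrap the integrability of $\varphi$, then reduce to Poisson's equation on the convex domain. From the $H^1_0$ bound and the Sobolev embedding $H^1_0(\Omega) \hookrightarrow L^6(\Omega)$ (valid for $N \le 3$), together with $\mathbb{c}_u \in L^p(\Omega)^N$, the term $\div(\mathbb{c}_u \varphi)$ — after moving the lower-order term to the right-hand side by writing $\mathbb{c}_u \cdot \nabla\varphi = \div(\mathbb{c}_u \varphi) - (\div \mathbb{c}_u)\varphi$ is not quite legitimate since $\div \mathbb{c}_u$ need not be a function; instead keep it as $\mathbb{c}_u \cdot \nabla\varphi$ and treat it as a right-hand side datum in $L^r(\Omega)$ for a suitable $r$. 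More cleanly: test the equation to get $\norm{\nabla\varphi}_{L^2} \le C_U\norm{v}_{L^q}$, then observe $\mathbb{c}_u \cdot \nabla\varphi \in L^s(\Omega)$ with $1/s = 1/p + 1/2$, and since $v \in L^q \hookrightarrow L^s$ for $s$ close to $2$, the full right-hand side of the equation written as $-\div(a_u\nabla\varphi) = v - \mathbb{c}_u\cdot\nabla\varphi \in L^s(\Omega) \hookrightarrow W^{-1,\tilde p}(\Omega)$ for some $\tilde p$; a $W^{1,\tilde p}$ bound then improves the integrability of $\nabla\varphi$, iterate. I would organize this as: (i) apply the Kirchhoff-type substitution $\hat\varphi := a_u \varphi$ — wait, here $a_u$ depends on $x$ through $y_u$ in a merely $W^{1,\infty}$ way, so $a_u \in W^{1,\infty}(\Omega)$ by \eqref{eq:apriori_2}, hence $-\Delta \hat\varphi = $ (terms involving $v$, $\nabla a_u$, $\varphi$, $\nabla\varphi$, $\mathbb{c}_u\cdot\nabla\varphi$), all of which one controls in $L^s$ for appropriate $s$; (ii) apply the maximal $L^r$-regularity for Poisson on convex domains \cite[Cor.~1]{Fromm1993} and the Stampacchia theorem \cite[Thm.~12.4]{Chipot2009} exactly as in Step 1 of \cref{thm:diff}, bootstrapping until $\hat\varphi \in W^{2,s}(\Omega)$ with $s$ large enough that $W^{2,s} \hookrightarrow W^{1,\infty}$ and, for the $H^2$ part, using \cref{lem:regularity-convex} (the a priori estimate on convex domains quoted in the appendix) once the right-hand side is known to be in $L^2(\Omega)$; (iii) divide back by $a_u$ and use $a_u \in W^{1,\infty}$, $a_u \ge \underline b > 0$ to transfer the regularity from $\hat\varphi$ to $\varphi$, collecting constants into $C_U$.

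The main obstacle — and the only genuinely non-routine point — is the lower-order gradient term $\mathbb{c}_u \cdot \nabla\varphi$: unlike in the state equation where the analogous term is $\div(\mathbb{c}_u y)$ in divergence form, here it sits in \emph{non-divergence} position and $\mathbb{c}_u$ is only $L^p$, not better, so one cannot directly integrate by parts to relocate it. The resolution is to carry it along as part of the right-hand side datum and check that at each bootstrap stage its integrability is strictly better than what is needed to close the next step: since $\nabla\varphi$ starts in $L^2$ and $\mathbb{c}_u \in L^p$ with $p > N$, the product lies in $L^s$ with $s > $ the critical exponent needed for $W^{1,\tilde p}$-regularity to push $\nabla\varphi$ into a higher $L^r$, and the iteration terminates after finitely many steps (exactly as the bootstrapping argument in Case $N=3$, $p>6$ of \cref{thm:diff}). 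One should also note that the constant in \eqref{eq:apriori-adjoint2} depends on $u$ only through the bounds \eqref{eq:apriori_2} and \eqref{eq:coef-cond}, which are uniform over the bounded set $U$ by \cref{thm:existence}; hence $C_U$ is indeed independent of the particular $u \in U$. I would remark that, because the entire argument is linear in $v$, the $\norm{v}_{L^q(\Omega)}$ on the right-hand side of \eqref{eq:apriori-adjoint2} comes out automatically by tracking homogeneity, so no separate normalization argument is needed.
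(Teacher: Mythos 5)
Your first step (existence and uniqueness in $H^1_0(\Omega)$ via \cite[Thm.~2.6]{CasasDhamo2011} applied to the operator $T_u\varphi = -\div[a_u\nabla\varphi]+\mathbb{c}_u\cdot\nabla\varphi$) is exactly the paper's argument and is fine. For the regularity part, however, you have misidentified the difficulty. Since $U$ is bounded in $L^p(\Omega)$ with $p>N$, the second half of \cref{thm:existence} applies and gives $y_u\in H^2(\Omega)\cap W^{1,\infty}(\Omega)$ with a bound uniform over $U$; hence $\mathbb{c}_u=\1_{\{y_u\notin E_a\}}a'(y_u)\nabla y_u\in L^\infty(\Omega)^N$ and $a_u$ is uniformly Lipschitz. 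Your ``main obstacle'' --- that $\mathbb{c}_u$ is only $L^p$ so that $\mathbb{c}_u\cdot\nabla\varphi$ must be bootstrapped through a chain of $L^s$ spaces --- therefore does not arise: once $\norm{\nabla\varphi}_{L^2}\leq C_U\norm{v}_{L^q}$ is known, $\mathbb{c}_u\cdot\nabla\varphi\in L^2(\Omega)$ immediately, so $-\div[a_u\nabla\varphi]=v-\mathbb{c}_u\cdot\nabla\varphi\in L^2(\Omega)$ and the $H^2$-regularity follows in a single step from \cite[Thm.~3.2.1.2]{Grisvard1985} (divergence-form operator with Lipschitz coefficient on a convex domain). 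Note that \cref{lem:regularity-convex} is a statement about the quasilinear state equation and is not the right tool here.

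The second, more serious issue is your proposed mechanism for the $W^{1,\infty}$ bound. The substitution $\hat\varphi:=a_u\varphi$ yields $-\Delta\hat\varphi=v-\mathbb{c}_u\cdot\nabla\varphi-\div(\varphi\nabla a_u)$, and the last term is genuinely distributional ($\nabla a_u\in L^\infty$ only, so $\div(\varphi\nabla a_u)$ is not a function); consequently $\hat\varphi\in W^{2,s}(\Omega)$ is \emph{not} available from this equation, only $W^{1,s}$-type regularity, and dividing back by the merely Lipschitz factor $a_u$ could not transfer second derivatives anyway. The paper instead keeps $\varphi$ itself: having $\varphi\in H^2(\Omega)$, it writes the equation in strong form, divides by $a_u$, and observes via the chain rule that $\nabla a_u=\nabla b+\mathbb{c}_u$, so the awkward non-divergence term cancels exactly, leaving $-\Delta\varphi=\frac{1}{a_u}[v+\nabla b\cdot\nabla\varphi]$. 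Since $H^2(\Omega)\hookrightarrow W^{1,6}(\Omega)$, the right-hand side lies in $L^{\tilde q}(\Omega)$ with $\tilde q=\min\{q,6\}>N$, and the global gradient bound for Poisson's equation on convex domains gives $\varphi\in W^{1,\infty}(\Omega)$ with the claimed estimate. Without either the observation that $\mathbb{c}_u\in L^\infty$ or this cancellation, your argument as written does not close.
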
 
\begin{proof} 
    For any $u \in W^{-1,p}(\Omega)$, we define a function $a_u$ and a vector-valued function $\mathbb{c}_u$ given by
    \begin{equation*}
        a_u := b + a(y_u), \qquad \mathbb{c}_u := \1_{\{ y_u \notin E_{a} \}}a'(y_u) \nabla y_u.
    \end{equation*}
    By \cref{thm:existence}, we have $y_u \in W^{1,p}_0(\Omega)$ and thus $a_u \in W^{1,p}(\Omega)$ and $\mathbb c_u \in L^p(\Omega)^N$.
    Consider the operator
    \begin{equation*}
        T_u: H^1_0(\Omega)  \to  H^{-1}(\Omega) \qquad
        \varphi  \mapsto  T_u\varphi := -\div [a_u \nabla \varphi]  + \mathbb{c}_u \cdot \nabla \varphi. 
    \end{equation*}
    Then $T_u$ is an isomorphism (cf.~\cite[Thm.~2.6]{CasasDhamo2011}). Therefore, for any $v \in H^{-1}(\Omega)$, there exists a unique solution $\varphi \in H^1_0(\Omega)$ to \eqref{eq:adjoint-state} such that $T_u \varphi = v$.

    It remains to show the $W^{1,\infty}$-regularity of $\varphi$ and the estimate \eqref{eq:apriori-adjoint2}. Let $U$ be a bounded subset in $L^p(\Omega)$ and $u \in U$, $v \in L^q(\Omega)$ be arbitrary.
    Then $y_u \in W^{1,\infty}(\Omega)$ by \cref{thm:existence}. It follows that $\mathbb{c}_u \in L^\infty(\Omega)^N$ and $a_u$ is uniformly Lipschitz continuous. In addition, $\varphi \in H^{1}_0(\Omega)$ satisfies
    \begin{equation*}
        -\div [a_u \nabla \varphi] = v - \mathbb{c}_u \cdot \nabla \varphi \in L^2(\Omega),
    \end{equation*}
    which together with the $H^{2}$-regularity of solutions (see, e.g., \cite[Thm.~3.2.1.2]{Grisvard1985}) gives $\varphi \in H^2(\Omega)$. Therefore, $\varphi \in H^{1}_0(\Omega) \cap H^2(\Omega)$ is the strong solution to 
    \begin{equation*}
        - \Delta \varphi = \frac{1}{a_u} \left[  v - \mathbb{c}_c \cdot \nabla \varphi + \nabla a_u \cdot \nabla \varphi \right] \quad \text{in } \Omega.
    \end{equation*}
    By virtue of the chain rule (see, e.g., \cite[Thm.~7.8]{Gilbarg_Trudinger}), one has $\nabla a_u = \nabla b + \mathbb{c}_u$. We then have that $\varphi \in H^{1}_0(\Omega) \cap H^2(\Omega)$ satisfies
    \begin{equation} \label{eq:varphi}
        - \Delta \varphi = f := \frac{1}{a_u} \left[  v + \nabla b \cdot \nabla \varphi \right] \quad \text{in } \Omega
    \end{equation}
    and there holds
    \begin{equation*}
        \norm{\Delta \varphi}_{L^2(\Omega)} \leq \frac{1}{\underline{b}} \left[\norm{v}_{L^2(\Omega)} + L_{b} \norm{\varphi}_{H^{1}_0(\Omega)} \right].
    \end{equation*}
    On the other hand, using a similar argument as in \eqref{eq:zv-esti-H1} and employing the continuous embedding $L^q(\Omega) \hookrightarrow W^{-1,q}(\Omega)$, we can show that
    \begin{equation*}
        \norm{\varphi}_{H^1_0(\Omega)} \leq C_{1,U}\norm{v}_{W^{-1,q}(\Omega)} \leq C_{2,U}\norm{v}_{L^{q}(\Omega)}.
    \end{equation*}
    We thus have
    \begin{equation} \label{eq:H2-esti}
        \norm{\varphi}_{H^2(\Omega)} \leq C_{3,U} \norm{v}_{L^{q}(\Omega)}.
    \end{equation}
    Setting $\tilde{q} := \min\{q,6\} >N$ and using the continuous embedding $H^2(\Omega) \hookrightarrow W^{1,6}(\Omega) \hookrightarrow W^{1,\tilde{q}}(\Omega)$ thus gives 
    \begin{equation*}
        \norm{f}_{L^{\tilde{q}}(\Omega)} \leq C_{4,U} \norm{v}_{L^{q}(\Omega)}.
    \end{equation*} 
    From this, \cref{ass:domain}, and the global boundedness of the gradient of solutions to Poisson's equation (see, e.g. \cite[Thm.~3.1, Rem.~3]{CianchiMazya2015}, \cite{Mazya2009,YangChen2018}), we can conclude that $\varphi \in W^{1, \infty}(\Omega)$ and that
    \begin{equation*}
        \norm{\nabla\varphi}_{L^{\infty}(\Omega)} \leq C_{5,U} \norm{f}_{L^{\tilde{q}}(\Omega)} \leq C_{5,U}C_{4,U} \norm{v}_{L^{q}(\Omega)},
    \end{equation*}
    which, along with \eqref{eq:H2-esti}, gives \eqref{eq:apriori-adjoint2}.
\end{proof}

From the chain rule, \cref{lem:adjoint-equation}, and an elementary calculus, we derive the following result.
\begin{theorem}
    \label{thm:objective-deri}
    Let \crefrange{ass:domain}{ass:cost_func} hold. Then the reduced cost functional $j: L^2(\Omega) \to \R$ is of class $C^1$. Moreover, for any $u, h\in L^2(\Omega)$, there holds
    \begin{equation*}
        j'(u)h = \int_{\Omega} \left(\varphi_u + \nu u \right)h \dx,
    \end{equation*}
    where $\varphi_u \in H^{1}_0(\Omega)$ solves \eqref{eq:adjoint-state} corresponding to the right-hand side term $v=G'(y_u)$ and $y_u$ solves \eqref{eq:state}. 
\end{theorem}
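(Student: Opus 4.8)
The plan is to split $j = G\circ S + \tfrac{\nu}{2}\norm{\cdot}_{L^2(\Omega)}^2$: the quadratic term is smooth on $L^2(\Omega)$ with derivative $h\mapsto\nu\int_\Omega u h\dx$, so the whole statement reduces to analyzing $G\circ S$. To bring this into the setting of \cref{thm:diff}, fix once and for all some $p\in(N,6)$; since $N\in\{2,3\}$ this provides the continuous embedding $L^2(\Omega)\hookrightarrow W^{-1,p}(\Omega)$ together with the continuous embedding $W^{1,p}_0(\Omega)\hookrightarrow H^1_0(\Omega)$. Hence $G\circ S$ is the composition
\[
    L^2(\Omega)\hookrightarrow W^{-1,p}(\Omega)\xrightarrow{\,S\,}W^{1,p}_0(\Omega)\hookrightarrow H^1_0(\Omega)\xrightarrow{\,G\,}\R
\]
of $C^1$-maps ($S$ by \cref{thm:diff}, $G$ by \cref{ass:cost_func}, the embeddings being bounded and linear). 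The chain rule then shows that $j$ is of class $C^1$ on $L^2(\Omega)$, that $j'$ is continuous, and that
\[
    j'(u)h = G'(y_u)\bigl(S'(u)h\bigr) + \nu\int_\Omega u h\dx,\qquad y_u:=S(u).
\]

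Next I would reexpress $G'(y_u)\bigl(S'(u)h\bigr)$ through the adjoint state. Since $G$ is $C^2$, $G'(y_u)\in (H^1_0(\Omega))^*=H^{-1}(\Omega)$, so \cref{lem:adjoint-equation} yields a unique $\varphi_u\in H^1_0(\Omega)$ solving \eqref{eq:adjoint-state} with right-hand side $v=G'(y_u)$. Writing $z_h:=S'(u)h\in W^{1,p}_0(\Omega)$, which solves \eqref{eq:deri} with right-hand side $h$, I would test the weak formulation of \eqref{eq:deri} with $\varphi_u$ — admissible because $p>N\ge2$ forces $p':=p/(p-1)<2$ and hence $H^1_0(\Omega)\hookrightarrow W^{1,p'}_0(\Omega)$ — and test the weak formulation of \eqref{eq:adjoint-state} with $z_h\in W^{1,p}_0(\Omega)\hookrightarrow H^1_0(\Omega)$. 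Both tests produce the same bilinear expression
\[
    \int_\Omega (b+a(y_u))\nabla z_h\cdot\nabla\varphi_u\dx + \int_\Omega \1_{\{y_u\notin E_{a}\}}a'(y_u)\,z_h\,(\nabla y_u\cdot\nabla\varphi_u)\dx,
\]
so comparing the right-hand sides gives
\[
    G'(y_u)(z_h) = \langle h,\varphi_u\rangle_{W^{-1,p}(\Omega),W^{1,p'}_0(\Omega)} = \int_\Omega h\,\varphi_u\dx,
\]
the last equality because $h\in L^2(\Omega)$ and $\varphi_u\in H^1_0(\Omega)\subset L^2(\Omega)$. Substituting this into the formula for $j'(u)h$ and using $\varphi_u\in L^2(\Omega)$ to identify $j'(u)\in L^2(\Omega)^*$ with the function $\varphi_u+\nu u$ yields the asserted representation.

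This is essentially a bookkeeping argument, and the only genuinely delicate point is the simultaneous choice of exponents: one needs a single $p\in(N,6)$ for which $L^2(\Omega)\hookrightarrow W^{-1,p}(\Omega)$ (so that an $L^2$-control can be fed into the $C^1$-machinery of \cref{thm:diff}), $W^{1,p}_0(\Omega)\hookrightarrow H^1_0(\Omega)$ (so that $G$ may be postcomposed), and $H^1_0(\Omega)\hookrightarrow W^{1,p'}_0(\Omega)$ (so that $\varphi_u$ is an admissible test function for the linearized equation) all hold — which is precisely where the restriction $N\in\{2,3\}$ enters. Checking that the two tested bilinear forms literally coincide is immediate from the symmetry of the principal part and of the lower-order term, and the continuity of $j'$ follows directly from the chain rule for $C^1$-maps applied to the displayed composition together with $u\mapsto\nu u$ being continuous.
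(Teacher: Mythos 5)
Your argument is correct and is precisely the route the paper indicates (the paper omits the proof, citing only "the chain rule, \cref{lem:adjoint-equation}, and an elementary calculus"): chain rule through the embeddings $L^2(\Omega)\hookrightarrow W^{-1,p}(\Omega)$ and $W^{1,p}_0(\Omega)\hookrightarrow H^1_0(\Omega)$ for $p\in(N,6)$, followed by the standard adjoint identity obtained by cross-testing \eqref{eq:deri} and \eqref{eq:adjoint-state}. Your exponent bookkeeping and the verification that $\varphi_u$ and $z_h$ are admissible test functions are exactly the points that need checking, and they check out.
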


We now arrive at first-order necessary optimality condition for the problem \eqref{eq:P2}. Since the reduced functional is Fréchet differentiable by \cref{thm:objective-deri}, the proof of the following result, based on the variational inequality $j'(\bar u)(u -\bar u) \geq 0$ for all $u \in \mathcal{U}_{ad}$, is standard and therefore omitted.
\begin{theorem}
    \label{thm:1st-OC}
    Let \crefrange{ass:domain}{ass:cost_func} hold. If $\bar u$ is a local minimizer of \eqref{eq:P2}, then there exists an adjoint state $\bar\varphi \in H^1_0(\Omega)$ such that
    \begin{subequations}
        \label{eq:1st-OS}
        \begin{align}
            &\left\{
                \begin{aligned}
                    -\div \left [\left(b +a(\bar y) \right) \nabla \bar\varphi \right] + \1_{\{ \bar y \notin E_{a}\} } a'(\bar y) \nabla \bar y \cdot \nabla \bar \varphi &= G'(\bar y) && \text{in } \Omega, \\
                    \bar \varphi &=0 && \text{on } \partial\Omega,
                \end{aligned}
            \right.  \label{eq:adjoint_OS} \\
            &\int_\Omega \left(\bar \varphi + \nu \bar u \right)\left( u - \bar u \right) \dx \geq 0 \quad \text{for all } u \in \mathcal{U}_{ad}  \label{eq:normal_OS}
        \end{align}
    \end{subequations}
    with $\bar y:= S(\bar u)$.
\end{theorem}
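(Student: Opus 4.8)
The plan is to reduce the statement to the convex variational inequality $j'(\bar u)(u - \bar u) \ge 0$ for all $u \in \mathcal{U}_{ad}$ and then simply substitute the derivative formula from \cref{thm:objective-deri}. First I would fix an arbitrary $u \in \mathcal{U}_{ad}$ and consider the segment $u_t := \bar u + t(u - \bar u)$ for $t \in [0,1]$. Since $\mathcal{U}_{ad}$ is convex, $u_t \in \mathcal{U}_{ad}$; moreover $\norm{u_t - \bar u}_{L^\infty(\Omega)} = t\,\norm{u - \bar u}_{L^\infty(\Omega)} \to 0$ as $t \to 0^+$, so for $t$ small enough $u_t$ lies in the $L^\infty$-neighborhood of $\bar u$ on which $\bar u$ is optimal. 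Hence $j(u_t) \ge j(\bar u)$ for all sufficiently small $t > 0$, which gives $\tfrac1t\big(j(u_t) - j(\bar u)\big) \ge 0$; letting $t \to 0^+$ and using that $j$ is Fréchet (hence directionally) differentiable at $\bar u$ by \cref{thm:objective-deri} yields $j'(\bar u)(u - \bar u) \ge 0$.

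Next I would invoke \cref{thm:objective-deri} with the increment $h = u - \bar u$: defining $\bar\varphi := \varphi_{\bar u} \in H^1_0(\Omega)$ as the unique solution of \eqref{eq:adjoint-state} with right-hand side $v = G'(\bar y)$ and $\bar y := S(\bar u)$, whose existence is guaranteed by \cref{lem:adjoint-equation} (note $G'(\bar y) \in H^{-1}(\Omega)$ since $G$ is $C^1$ on $H^1_0(\Omega)$ by \cref{ass:cost_func}), the formula $j'(\bar u)h = \int_\Omega (\bar\varphi + \nu \bar u)\,h \dx$ immediately turns the variational inequality into \eqref{eq:normal_OS}, while the equation solved by $\varphi_{\bar u}$ is exactly \eqref{eq:adjoint_OS}. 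This completes the argument.

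There is essentially no hard step here: the only point requiring a little care is that $\bar u$ is \emph{a priori} only a local minimizer with respect to the $L^\infty$-topology, so one must verify that the perturbed controls $u_t$ remain admissible competitors for small $t$ — which is immediate from convexity of $\mathcal{U}_{ad}$ and the scaling $\norm{u_t - \bar u}_{L^\infty(\Omega)} = t\,\norm{u - \bar u}_{L^\infty(\Omega)}$ — before passing to the limit using the $C^1$-regularity of $j$ on $L^2(\Omega)$ established in \cref{thm:objective-deri}. This is why the result can fairly be called standard and its proof omitted.
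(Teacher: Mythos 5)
Your proof is correct and follows exactly the route the paper indicates: it establishes the variational inequality $j'(\bar u)(u-\bar u)\ge 0$ via the standard convexity/segment argument and then substitutes the derivative formula from \cref{thm:objective-deri}, which is precisely the argument the authors declare standard and omit. No gaps.
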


\begin{remark} \label{rem:Pontryagin}
    If the discrepancy term $G$ is an integral functional of the form
    \begin{equation*}
        G(y)  = \int_\Omega g(x,y(x)) \dx
    \end{equation*}
    where $g: \Omega \times \R \to \R$ is a Carath\'{e}odory function of class $C^1$ with respect to the second variable such that \cref{ass:cost_func} holds,
    then  we obtain from \cref{thm:1st-OC} and \cite[Lem.~2.26]{Troltzsch2010} the Pontryagin maximum principle
    \begin{equation*}
        g(x,\bar y(x))+ \tfrac{\nu}{2} \bar u(x)^2 +\bar\varphi(x)\bar u(x) = \min \left\{g(x,\bar y(x))+\tfrac{\nu}{2} s^2 +\bar\varphi(x) s \,\middle|\, {s \in [\alpha(x), \beta(x)]} \right\} \quad \text{for a.e. } x \in \Omega
    \end{equation*}
    due to the convexity of the mapping $u\mapsto J(y,u)$; cf. \cite[Thm.~4.1]{CasasTroltzsch2009}. 
\end{remark}

\section{Second-order optimality conditions} \label{sec:2nd-OC}

Our goal is now to derive second-order necessary and sufficient conditions in terms of a non-smooth \emph{curvature functional} characterizing the (generalized) curvature of the reduced functional $j$ in critical directions. A similar approach was followed in \cite{ChristofWachsmuth2018,ChristofWachsmuth2019}.
We will introduce the necessary technical notation in \cref{sec:2nd-OC:curvature}, prove preliminary estimates in \cref{sec:2nd-OC:estimates}, and finally derive the desired second-order conditions in \cref{sec:2nd-OC:soc}.

\subsection{Non-smooth curvature functional}\label{sec:2nd-OC:curvature}

Intuitively, differentiating \eqref{eq:P} (formally) and applying the sum and product rules, we see that the total curvature of $j$ can be separated into three contributions: a \emph{smooth} part involving only $a$ and its derivatives in smooth points; a \emph{first-order non-smooth} part involving only first (directional) derivatives of $a$; and a \emph{second-order non-smooth} part relating to second generalized derivatives of $a$.

Correspondingly, we define the following three partial curvature functionals at a point $(u,y,\varphi)\in L^2(\Omega)\times H^1(\Omega)\times W^{1,\infty}(\Omega)$.
First, the smooth part of the curvature in directions $(h_1,h_2)\in L^2(\Omega)^2$ is given by
\begin{multline}\label{eq:curvature-smooth}
    Q_s(u,y,\varphi; h_1,h_2) := \frac{1}{2}G''(y)(S'(u)h_1 S'(u)h_2) + \frac{\nu}{2} \int_\Omega h_1h_2 \dx\\
    - \frac{1}{2} \int_\Omega\1_{ \{ y \notin E_{a} \}} a''(y)(S'(u)h_1)(S'(u)h_2) \nabla y \cdot \nabla \varphi \dx,
\end{multline}
which is a bilinear form in $(h_1,h_2)$.

The first-order non-smooth part of the curvature is given by
\begin{equation*}\label{eq:curvature-nonsmooth-1}
    Q_1(u,y,\varphi; h_1,h_2) :=  
    - \frac{1}{2} \int_\Omega \left[a'(y; S'(u)h_1) \nabla (S'(u)h_2) +a'(y; S'(u)h_2)\nabla (S'(u)h_1) \right]\cdot  \nabla\varphi  \dx.
\end{equation*}
Although $Q_1(u, y, \varphi; \cdot, \cdot)$ is not bilinear, it is positively homogeneous in each variable due to the positive homogeneity of the function $a'(y(x); \cdot)$ for all $x \in \overline \Omega$, i.e.,
\begin{equation*}
    Q_1(u, y, \varphi;\tau_1h_1, \tau_2 h_2) = \tau_1\tau_2 Q_1(u, y, \varphi; h_1, h_2) \quad \text{for all } h_1, h_2 \in L^2(\Omega), \tau_1, \tau_2 \geq 0.
\end{equation*}
If $a$ is a $C^2$ function, $Q_s+Q_1$ corresponds to the second derivative $j''(u)(h_1,h_2)$ of the reduced functional;
in this case our second-order conditions reduce to the results obtained in \cite{CasasTroltzsch2009}.

The critical part for our analysis is of course the second-order non-smooth part, which requires some additional notation.
For any $0 \leq i \leq \barK $, $s \in \R$, and $h \in L^2(\Omega)$, we define 
\begin{equation*}
    \begin{aligned}
        \zeta_i(u,y;s,h) &:=   [a'_{i-1}(t_i) - a'_i(t_i)] \1_{\Omega_{S(u+sh), y }^{i,2}}(t_i - S(u+sh)) \\
        \MoveEqLeft[-1]+ [a'_{i+1}(t_{i+1}) - a'_i(t_{i+1})] \1_{\Omega_{S(u+sh), y }^{i,3}}(t_{i+1}-S(u+sh)),
    \end{aligned}
\end{equation*}
with $t_i$ as defined in \cref{ass:PC1-func} and the sets $\Omega_{y, \hat y }^{i,2}, \Omega_{y, \hat y}^{i,3}$ as defined in \cref{lem:T-decomposition}. Here we use the convention $a_{-1} \equiv a_{\barK+1} \equiv 0$, $a'_{0}(t_{0}) =a'_{0}(-\infty) = 0$, and $a'_{\barK}(t_{\barK+1}) =a'_{\barK}(\infty) = 0$.
We then define for positive null sequences $\{s_n\} \in c_0^+:= \left\{ \{s_n\}\subset (0,\infty) \mid s_n \to 0 \right\}$, $h \in L^2(\Omega)$, and $\{h_n\} \subset L^2(\Omega)$, 
\begin{equation}
    \label{eq:A-n}
    A_n (u,y;\{s_n\}, \{h_n\}) := \sum_{i =0}^{\barK } \zeta_i(u,y;s_n,h_n)
\end{equation}
as well as 
\begin{equation}
    \label{eq:key-term-sn}
    \tilde{Q}(u,y,\varphi;\{s_n\}, h) := \liminf_{n \to \infty} \frac{1}{s_n^2} \int_\Omega  A_n (u,y;\{s_n\}, \{h\}) \nabla y \cdot \nabla \varphi \dx,
\end{equation}
where $\{h\}$ denotes the constant sequence $h_n \equiv h$.
(These terms will all be of independent use in the following.)
The second-order non-smooth part of the curvature in direction $h\in L^2(\Omega)$ is then given by
\begin{equation}
    \label{eq:key-term}
    Q_2(u,y,\varphi; h) := \inf \left\{ \tilde{Q}(u,y,\varphi; \{s_n\}, h) \mid \{s_n\} \in c_0^+ \right\}.
\end{equation}
Crucially, $Q_2$ is positively homogeneous of degree $2$ in $h$.
\begin{lemma} \label{lem:homogeneity}
    Let $u, h \in L^2(\Omega)$, $y \in H^1(\Omega)$, and $\varphi \in W^{1,\infty}(\Omega)$.  If $Q_2(u,y,\varphi;h) < \infty$, then 
    \begin{equation*}
        Q_2(u,y,\varphi;th) = t^2 Q_2(u,y,\varphi; h)\qquad\text{for all }t>0.
    \end{equation*}
\end{lemma}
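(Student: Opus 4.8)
The plan is to reduce everything to the elementary fact that rescaling the direction $h$ by a factor $t>0$ has the same effect on the quantities defining $Q_2$ as rescaling the step sizes $s_n$ by $t$, together with the observation that the set $c_0^+$ of positive null sequences is invariant under multiplication by $t$. First I would observe that each term $\zeta_i(u,y;s,h)$ depends on $(s,h)$ only through $S(u+sh)$: the coefficients $a'_{i-1}(t_i)-a'_i(t_i)$ and $a'_{i+1}(t_{i+1})-a'_i(t_{i+1})$ are constants, while the indicators $\1_{\Omega_{S(u+sh),y}^{i,2}}$, $\1_{\Omega_{S(u+sh),y}^{i,3}}$ and the multipliers $t_i-S(u+sh)$, $t_{i+1}-S(u+sh)$ are all built from $S(u+sh)$ alone. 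Since $S(u+s(th))=S(u+(ts)h)$, this gives, pointwise on $\Omega$,
\[
    \zeta_i(u,y;s,th)=\zeta_i(u,y;ts,h)\qquad (0\le i\le \barK,\ s>0),
\]
and hence, by the definition \eqref{eq:A-n} of $A_n$, $A_n(u,y;\{s_n\},\{th\})=A_n(u,y;\{ts_n\},\{h\})$ as functions on $\Omega$, for every $\{s_n\}\in c_0^+$ and every $n\in\N$.

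Next I would plug this identity into the definition \eqref{eq:key-term-sn} of $\tilde Q$; writing $s_n^{-2}=t^2(ts_n)^{-2}$ and using that $t^2>0$ — so that both $\liminf$ and the integral are positively homogeneous — one obtains
\[
    \tilde Q(u,y,\varphi;\{s_n\},th)=t^2\,\tilde Q(u,y,\varphi;\{ts_n\},h),
\]
an identity valid in $[-\infty,+\infty]$. (The integrals here are well defined: on the sets appearing in $\zeta_i$ the factors $t_i-S(u+sh)$ and $t_{i+1}-S(u+sh)$ are bounded by $\delta$ in modulus, so each $A_n$ is a bounded function, and since $\nabla y\in L^2(\Omega)$ and $\nabla\varphi\in L^\infty(\Omega)$ the integrand $A_n\,\nabla y\cdot\nabla\varphi$ lies in $L^1(\Omega)$.) Finally I would pass to the infimum over $\{s_n\}\in c_0^+$ in \eqref{eq:key-term}: since $t>0$, the map $\{s_n\}\mapsto\{ts_n\}$ is a bijection of $c_0^+$ onto itself, whence $\inf_{\{s_n\}\in c_0^+}\tilde Q(u,y,\varphi;\{ts_n\},h)=Q_2(u,y,\varphi;h)$, and pulling the positive constant $t^2$ out of the infimum yields $Q_2(u,y,\varphi;th)=t^2Q_2(u,y,\varphi;h)$.

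I do not expect a genuine obstacle here; the only points that require care are the bookkeeping of the nested definitions — in particular that $A_n$ involves the $n$-th entries of the step sequence and of the constant direction sequence $\{h\}$ — and the integrability remark above. The finiteness hypothesis $Q_2(u,y,\varphi;h)<\infty$ is in fact not used in the rescaling computation itself (for $t>0$ one has $t^2\cdot(\pm\infty)=\pm\infty$, and $\inf$ is positively homogeneous on $[-\infty,+\infty]$); it is kept only to make the product $t^2Q_2(u,y,\varphi;h)$ unambiguous and to match the way the lemma is invoked later.
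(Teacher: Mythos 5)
Your proposal is correct and follows essentially the same route as the paper's own proof: the identity $\zeta_i(u,y;s,th)=\zeta_i(u,y;ts,h)$ (hence $A_n(u,y;\{s_n\},\{th\})=A_n(u,y;\{ts_n\},\{h\})$), the resulting relation $\tilde Q(u,y,\varphi;\{s_n\},th)=t^2\tilde Q(u,y,\varphi;\{ts_n\},h)$, and the substitution $r_n:=ts_n$ in the infimum over $c_0^+$. Your added remarks on integrability and on the dispensability of the finiteness hypothesis are accurate but not needed for the argument.
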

\begin{proof}
    We first observe that
    \begin{equation*}
        \zeta_i(u,y;s_n,th) = \zeta_i(u,y;ts_n,h) \quad \text{and} \quad A_n (u,y;\{s_n\}, \{th\}) = A_n (u,y;\{ts_n\}, \{h\})
    \end{equation*}
    hold for all $0\leq i \leq \barK$, $n \geq 1$ and for all $u, h \in L^2(\Omega)$, $y \in H^1(\Omega)$, $\{s_n\} \in c_0^+$, and $t>0$. By the definition of $\tilde{Q}$, we have    
    \begin{equation*}
        \tilde{Q}(u,y,\varphi;\{s_n\}, th) = t^2 \tilde{Q}(u,y,\varphi;\{ts_n\}, h)
    \end{equation*}
    for all $u, h \in L^2(\Omega)$, $y \in H^1(\Omega)$, $\varphi \in W^{1,\infty}(\Omega)$, $\{s_n\} \in c_0^+$, and $t>0$. 
    It follows that
    \begin{equation*}
        \begin{aligned}[b]
            Q_2(u,y,\varphi; th) &= \inf \left\{ \tilde{Q}(u,y,\varphi; \{s_n\}, th) \mid \{s_n\} \in c_0^+ \right\}\\
            & = \inf \left\{ t^2\tilde{Q}(u,y,\varphi; \{ts_n\}, h) \mid \{s_n\} \in c_0^+ \right\}\\
            & = t^2 \inf \left\{ \tilde{Q}(u,y,\varphi; \{r_n\}, h) \mid \{r_n\} \in c_0^+ \right\} \quad (\text{by setting } r_n := ts_n)\\
            & = t^2 Q_2(u,y,\varphi; h).
        \end{aligned}
        \qedhere
    \end{equation*}
\end{proof}

Note also that $\zeta_i(u,y; s,h)=0$ and therefore $Q_2(u,y,\varphi; h)=0$ if the functions $a_i$ have the property that $a_{i-1}'(t_i) = a_i'(t_i)$ for all $1\leq i \leq \barK$, i.e., if the derivative $a'$ is finitely $PC^1$. 
We also remark that $Q_2$ is related to the term $\sigma(h)$ used to bound the second derivative of the Lagrangian in \cite[Chap.~3]{BonnansShapiro2000} and which there characterized the gap between necessary and sufficient second-order conditions.

\bigskip

Finally, to account for the control constraints, we recall the following basic notation standard in the study of second-order conditions; see, e.g.,\cite{BonnansShapiro2000,BayenBonnansSilva2014}.
Let $K$ be a closed subset in $L^2(\Omega)$ and let $z \in K$ be arbitrary.
The \emph{radial}, \emph{contingent tangent}, and \emph{normal} cones to $K$ at $z$ are defined, respectively, as
\begin{align*}
    & \mathcal{R}(K; z) := \left\{ v \in L^2(\Omega)\,\middle|\, \exists \bar s > 0 \text{ s.t. } z + s v \in K\ \text{for all } s \in [0,\bar s] \right\},\\
    & \mathcal{T}(K; z) := \left\{ v \in L^2(\Omega)\,\middle|\, \exists s_n \to  0^+, v_n \to v \text{ in } L^2(\Omega) \text{ s.t. } z + s_n v_n \in K \text{ for all } n \in\N  \right\},\\
    & \mathcal{N}(K; z) := \left\{ w \in L^2(\Omega) \,\,\middle|\, \int_\Omega w(x)\left( v(x) - z(x) \right)\dx \leq 0 \text{ for all } v \in \mathcal{T}(K; z) \right\}.
\end{align*} 
It is well-known that when $K$ is convex, we have
\begin{equation*}
    \mathcal{T}(K; z) = \text{cl}_2\left[ \mathcal{R}(K; z) \right],
\end{equation*}
where cl$_2(U)$ stands for the closure of a set $U$ in $L^2(\Omega)$.
For any $w \in L^2(\Omega)$, we denote the annihilator of $w$ by
\begin{equation*}
    w^\bot := \left\{ h \in L^2(\Omega) \,\middle|\, \int_\Omega w(x)h(x)\dx = 0 \right\}.
\end{equation*}
Furthermore, we say that the set $K$ is \emph{polyhedric at $z \in K$} if for any $w \in \mathcal{N}(K;z)$, there holds
\begin{equation*}
    \text{cl}_2\left[ \mathcal{R}(K; z) \cap (w^\bot) \right] = \mathcal{T}(K; z) \cap (w^\bot).
\end{equation*}
We say that $K$ is \emph{polyhedric} if it is polyhedric at each point $z \in K$.

In the following, we will consider the admissible set $\mathcal{U}_{ad}$, defined in \eqref{eq:ad-set}, as a subset in $L^2(\Omega)$ rather than a subset in $L^\infty(\Omega)$. In this case, $\mathcal{U}_{ad}$ is polyhedric, see \cite[Lem.~4.13]{BayenBonnansSilva2014}.

Furthermore, for a given triple $(\bar u, \bar y, \bar \varphi)$ with $\bar u \in \mathcal{U}_{ad}$ satisfying the system \eqref{eq:1st-OS}, set 
\begin{equation}
    \label{eq:objective-der}
    \bar d:= \bar\varphi + \nu \bar u.  
\end{equation}
Obviously, $- \bar d \in \mathcal{N}(\mathcal{U}_{ad},\bar u)$ by \eqref{eq:normal_OS}. 
The \emph{critical cone} of the problem \eqref{eq:P2} at $\bar u$ is then defined via
\begin{equation}
    \label{eq:critical-cone}
    \mathcal{C}(\mathcal{U}_{ad};\bar u) := \left\{ h \in L^2(\Omega) \,\middle|\, h \in \mathcal{T}(\mathcal{U}_{ad}; \bar u) \cap (\bar d^\bot) \right\}.
\end{equation}

By \cite[Lem.~4.11]{BayenBonnansSilva2014}, the tangent cone $\mathcal{T}(\mathcal{U}_{ad}; \bar u)$ and the critical cone $\mathcal{C}(\mathcal{U}_{ad};\bar u)$ can, respectively, be characterized pointwise as 
\begin{align}
    \mathcal{T}(\mathcal{U}_{ad}; \bar u) =& \left\{ v \in L^2(\Omega) \,\middle|\, v(x)  \begin{cases}
            \geq 0 & \text{if }  \bar u(x) = \alpha(x)\\
            \leq 0 & \text{if }  \bar u(x) = \beta(x)
        \end{cases} 
    \text{a.e. } x \in \Omega \right\}\notag
    \shortintertext{and }
    \label{eq:critical-cone2}
    \mathcal{C}(\mathcal{U}_{ad};\bar u) :=& \left\{ h \in L^2(\Omega) \,\middle|\, h(x) 
        \begin{cases}
            \geq 0 & \text{if }  \bar u(x) = \alpha(x)\\
            \leq 0 & \text{if }  \bar u(x) = \beta(x) \\
            = 0 & \text{if }  \bar d(x) \neq 0
        \end{cases} 
        \text{a.e. } x \in \Omega 
    \right\}.
\end{align}

\subsection{Preliminary estimates}\label{sec:2nd-OC:estimates}

Throughout this section, let $(\bar u, \bar y, \bar \varphi)$ be a point that satisfies the system \eqref{eq:1st-OS} and $\bar d$ be given by \eqref{eq:objective-der}. 
We start this section with a second-order Taylor-type expansion.
\begin{lemma}
    \label{lem:2nd-expression}
    For any $u \in L^2(\Omega)$ and $y_u:= S(u)$, there holds
    \begin{multline} \label{eq:2nd-Taylor-expansion}
        j(u) - j(\bar u) = \int_0^1 (1-s)G''(\bar y + s(y_u -\bar y))(y_u - \bar y)^2 \ds + \frac{\nu}{2} \norm{u - \bar u}_{L^2(\Omega)}^2 \\
        \begin{aligned}[t]
            & + \int_\Omega \bar d(u - \bar u)\dx - \int_\Omega \left[a(y_u) - a(\bar y) \right] \nabla \bar \varphi \cdot \left( \nabla y_u - \nabla \bar y \right)\dx\\
            & - \int_\Omega \left[ a(y_u) - a(\bar y) - \1_{\{\bar y \notin E_{a} \} }a'(\bar y)(y_u - \bar y) \right] \nabla \bar \varphi \cdot \nabla \bar y \dx. 
        \end{aligned}
    \end{multline}
\end{lemma}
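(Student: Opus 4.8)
The strategy is to expand $j(u) - j(\bar u) = \left[G(y_u) - G(\bar y)\right] + \frac{\nu}{2}\left(\norm{u}_{L^2(\Omega)}^2 - \norm{\bar u}_{L^2(\Omega)}^2\right)$ term by term, and then to rewrite the state-dependent difference $G(y_u)-G(\bar y)$ using the adjoint equation \eqref{eq:adjoint_OS} so that the multiplier $\bar\varphi$ appears explicitly. First I would apply the second-order integral form of Taylor's theorem to the $C^2$-functional $G$ along the segment $s \mapsto \bar y + s(y_u - \bar y)$, which yields
\begin{equation*}
    G(y_u) - G(\bar y) = G'(\bar y)(y_u - \bar y) + \int_0^1 (1-s) G''(\bar y + s(y_u - \bar y))(y_u - \bar y)^2 \ds.
\end{equation*}
This already produces the first integral on the right-hand side of \eqref{eq:2nd-Taylor-expansion}. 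For the quadratic control term, the elementary identity $\frac{\nu}{2}\norm{u}^2 - \frac{\nu}{2}\norm{\bar u}^2 = \frac{\nu}{2}\norm{u-\bar u}^2 + \nu\int_\Omega \bar u(u-\bar u)\dx$ contributes the $\frac{\nu}{2}\norm{u-\bar u}_{L^2(\Omega)}^2$ term and a piece $\nu\int_\Omega\bar u(u-\bar u)\dx$ that will combine with the adjoint contribution to form $\int_\Omega \bar d(u-\bar u)\dx$, recalling $\bar d = \bar\varphi + \nu\bar u$.

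The key step is to handle the linear term $G'(\bar y)(y_u - \bar y)$. Here I would test the adjoint equation \eqref{eq:adjoint_OS} with $y_u - \bar y \in H^1_0(\Omega)$, which is admissible since both $y_u$ and $\bar y$ lie in $H^1_0(\Omega)$ by \cref{thm:existence}, to obtain
\begin{equation*}
    G'(\bar y)(y_u - \bar y) = \int_\Omega (b + a(\bar y))\nabla\bar\varphi\cdot\nabla(y_u-\bar y)\dx + \int_\Omega \1_{\{\bar y\notin E_a\}}a'(\bar y)(\nabla\bar y\cdot\nabla\bar\varphi)(y_u-\bar y)\dx.
\end{equation*}
Simultaneously, I would test the state equation \eqref{eq:state} for $u$ with $\bar\varphi$, and the state equation for $\bar u$ (also with $\bar\varphi$), and subtract, giving
\begin{equation*}
    \int_\Omega(u-\bar u)\bar\varphi\dx = \int_\Omega\left[(b+a(y_u))\nabla y_u - (b+a(\bar y))\nabla\bar y\right]\cdot\nabla\bar\varphi\dx.
\end{equation*}
The plan is then to subtract the first of these two identities from the second. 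On the right-hand side one rewrites $(b+a(y_u))\nabla y_u - (b+a(\bar y))\nabla\bar y - (b+a(\bar y))\nabla(y_u-\bar y) = (a(y_u)-a(\bar y))\nabla y_u = (a(y_u)-a(\bar y))\nabla(y_u-\bar y) + (a(y_u)-a(\bar y))\nabla\bar y$, and one regroups the $\1_{\{\bar y\notin E_a\}}a'(\bar y)(y_u-\bar y)\nabla\bar y$ term together with $(a(y_u)-a(\bar y))\nabla\bar y$ into the last integrand $\left[a(y_u) - a(\bar y) - \1_{\{\bar y\notin E_a\}}a'(\bar y)(y_u-\bar y)\right]\nabla\bar\varphi\cdot\nabla\bar y$. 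This yields exactly
\begin{equation*}
    G'(\bar y)(y_u-\bar y) = \int_\Omega(u-\bar u)\bar\varphi\dx - \int_\Omega(a(y_u)-a(\bar y))\nabla\bar\varphi\cdot\nabla(y_u-\bar y)\dx - \int_\Omega\left[a(y_u)-a(\bar y)-\1_{\{\bar y\notin E_a\}}a'(\bar y)(y_u-\bar y)\right]\nabla\bar\varphi\cdot\nabla\bar y\dx.
\end{equation*}
Adding the pieces $\nu\int_\Omega\bar u(u-\bar u)\dx$ and $\int_\Omega\bar\varphi(u-\bar u)\dx$ produces $\int_\Omega\bar d(u-\bar u)\dx$, and collecting everything gives \eqref{eq:2nd-Taylor-expansion}.

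The main technical obstacle is justifying that all the integrations by parts (equivalently, the use of the weak formulations) are legitimate: one needs $y_u - \bar y$ and $\bar\varphi$ to be valid test functions, and crucially one needs the products like $(a(y_u)-a(\bar y))\nabla(y_u-\bar y)\cdot\nabla\bar\varphi$ and $a'(\bar y)(\nabla\bar y\cdot\nabla\bar\varphi)(y_u-\bar y)$ to be integrable. This is where the regularity results pay off: by \cref{thm:existence}, for $u\in L^2(\Omega)$ one does not immediately get $W^{1,\infty}$ states, but $y_u,\bar y\in H^1_0(\Omega)$ and $\bar\varphi\in H^1_0(\Omega)$; since $a$ is Lipschitz on bounded sets and $y_u,\bar y\in C(\overline\Omega)\cap H^1_0(\Omega)$, the term $a(y_u)-a(\bar y)$ is bounded, so $(a(y_u)-a(\bar y))\nabla\bar\varphi\in L^2(\Omega)^N$ and pairs with $\nabla(y_u-\bar y)\in L^2(\Omega)^N$. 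For the adjoint-test term one uses $\bar\varphi\in W^{1,\infty}(\Omega)$ from \cref{lem:adjoint-equation} (since $\bar y = S(\bar u)$ with $\bar u\in L^\infty(\Omega)$ and $G'(\bar y)\in L^q(\Omega)$ for a suitable $q>N$, as $G$ is $C^2$ on $H^1_0$ — here one must be slightly careful and invoke the standing regularity of the optimal adjoint state), so $a'(\bar y)\nabla\bar y\cdot\nabla\bar\varphi\in L^\infty(\Omega)$ and pairs with $y_u-\bar y\in L^2(\Omega)$. Once the integrability is in place, the remaining manipulations are purely algebraic regrouping of integrands, and the identity \eqref{eq:2nd-Taylor-expansion} follows.
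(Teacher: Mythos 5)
Your proposal is correct and follows essentially the same route as the paper's proof: a second-order Taylor expansion of $G$, rewriting $G'(\bar y)(y_u-\bar y)$ via the weak form of the adjoint equation \eqref{eq:adjoint_OS}, testing the two state equations with $\bar\varphi$ and subtracting, and then regrouping the integrands to form $\int_\Omega \bar d(u-\bar u)\dx$ and the two nonlinear remainder terms. The additional care you take with integrability (boundedness of $a(y_u)-a(\bar y)$ and $\bar\varphi\in W^{1,\infty}(\Omega)$) is consistent with the standing assumptions of \cref{sec:2nd-OC:estimates}.
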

\begin{proof}
    Since $j$ is Fréchet differentiable by \cref{thm:objective-deri} and $G$ is $C^2$ by \cref{ass:cost_func}, we can use a Taylor expansion to write
    \begin{multline} \label{eq:obj-diff}
        \begin{aligned}[t]
            j(u) - j(\bar u) 
            &=  G(y_u) - G(\bar y) + \frac{\nu}{2} \left( \norm{u}_{L^2(\Omega)}^2 - \norm{\bar u}_{L^2(\Omega)}^2 \right) \\
            & = G'(\bar y)(y_u - \bar y) +\nu \int_\Omega  \left( u - \bar u \right)  \bar u\dx  \\
            \MoveEqLeft[-1]+ \int_0^1 (1-s)G''(\bar y + s(y_u -\bar y))(y_u - \bar y)^2 \ds + \frac{\nu}{2} \norm{u - \bar u}_{L^2(\Omega)}^2\\
            & = \int_\Omega  (b + a(\bar y )) \nabla \bar \varphi \cdot (\nabla y_u - \nabla \bar y)  + \1_{\{\bar y \notin E_{a} \}} a'(\bar y) \nabla \bar y \cdot \nabla \bar \varphi (y_u - \bar y) \dx\\
            \MoveEqLeft[-1] - \int_\Omega \bar\varphi(u - \bar u) \dx  
            + \int_\Omega \bar d (u - \bar u) \dx \\
            \MoveEqLeft[-1] +  \int_0^1 (1-s)G''(\bar y + s(y_u -\bar y))(y_u - \bar y)^2 \ds + \frac{\nu}{2} \norm{u - \bar u}_{L^2(\Omega)}^2,
        \end{aligned}
    \end{multline}
    where we have employed \eqref{eq:adjoint_OS} and the definition of $\bar d$ to obtain the last equality. Testing the state equations corresponding to $y_u$ and $\bar y$ by $\bar \varphi$ and then subtracting yields
    \begin{equation*}
        \begin{aligned}
            \int_\Omega \bar \varphi (u - \bar u) \dx = \int_\Omega \left[ (b + a(y_u) )(\nabla y_u - \nabla \bar y) +( a(y_u) - a(\bar y) ) \nabla \bar y \right] \cdot \nabla \bar\varphi \dx.
        \end{aligned}
    \end{equation*}
    Inserting this equality into \eqref{eq:obj-diff}, we arrive at the desired conclusion.
\end{proof}

\bigskip

A crucial step of our analysis will be to bound $Q_2(\bar u,\bar y,\bar \varphi; h)$ purely in terms of the jumps of the derivatives of $a$ in the optimal state $\bar y$. To do this, we define the jump functional $\Sigma: W^{1,1}(\Omega) \to [0, \infty]$ via
\begin{equation}
    \label{eq:E-functional}
    \begin{aligned}[t]
        \Sigma(y) & := \limsup_{r \to 0^+} \frac{1}{r} \sum_{m= 1}^{N} \sum_{i=1}^{\barK} \sigma_i \int_\Omega \left[ \1_{\{ 0 < |y - t_i | \leq r \}} \left| \partial_{x_m} y \right| \right]\dx \\
        & = \limsup_{r \to 0^+} \frac{1}{r} \sum_{m= 1}^{N} \sum_{ i\in I_y^+ }   \sigma_i \int_\Omega \left[ \1_{\{ |y - t_i | \leq r \}} \left| \partial_{x_m} y \right| \right]\dx,
    \end{aligned}
\end{equation}
where the (non-negative) $\{\sigma_i\}_{1\leq i \leq \barK}$ are as defined in \eqref{eq:sigma-i} and
\begin{equation*}
    I_y^+:= \left\{i \in \{1,\ldots,\barK\}\,\middle|\, \min\nolimits_{x \in \overline\Omega} y(x) \leq t_i \leq  \max\nolimits_{x \in \overline\Omega} y(x) \right\}.
\end{equation*}
(Note that in contrast to $I_y$ defined in \eqref{eq:index-set-reduced}, we exclude the largest value of $i$ for which $t_i < \min_{x\in\overline \Omega} y(x)$ but include the largest value of $i$ for which $t_i = \max_{x\in\overline\Omega} y(x)$.)

Clearly, if $a'$ is finitely $PC^1$, then $\Sigma(y) = 0$. On the other hand, if $a'$ has points of discontinuity, then, intuitively, $\Sigma(y)$ measures the oscillation of $y\in W^{1,1}(\Omega)$ around these discontinuities (which maybe unbounded even if there are only finitely many such points).
This is illustrated by the following one-dimensional example, whose derivation by straight-forward calculation is given in \cref{app:examples}.

\begin{example} \label{exam:Ey-ocsillation}
    Let $\Omega:= (\alpha_0, \beta_0)\subset\R$ be bounded and let $y: \overline \Omega \to \R$ be a Lipschitz continuous function that is piecewise monotone (increasing or decreasing). Assume that $a$ satisfies \cref{ass:PC1-func} such that $\min_{1 \leq i \leq \barK} \sigma_i>0$ and that 
    \begin{equation*}
        y^{-1}(E_a) := \{x \in [\alpha_0, \beta_0] \mid y(x) \in E_a \} = \bigcup_{j \in J} [\underline{x}_j,\overline x_j]
    \end{equation*} 
    for some $\underline x_j,\overline x_j\subset [\alpha_0,\beta_0]$, $j\in J$, with $[\underline{x}_j,\overline x_j] \cap [\underline{x}_k,\overline x_k] = \emptyset$ for all $j,k\in J$, $j \neq k$. Then, the following assertions hold:
    \begin{enumerate}[label =(\roman*)]
        \item if $\card(J) < \infty$, then $(\card(J) -1) \min_{1 \leq i \leq \barK} \sigma_i \leq \Sigma(y) \leq 2\card(J)\max_{1 \leq i \leq \barK} \sigma_i$;
        \item if $\card(J) = \infty$, then $\Sigma(y) = \infty$;
    \end{enumerate}
    where $\card(J)$ stands for the cardinality of set $J$.   
    In particular, $\Sigma(y)$ is finite if and only if the index set $J$ is finite.
\end{example}
Note that if $\underline x_j < \overline x_j$, then $[\underline x_j, \overline x_j] \subset\{y \in E_a\}$, which therefore has positive Lebesgue measure. This demonstrates that the assumption $\Sigma(y)<\infty$, which will be crucial throughout the following, does not imply that $\{y\in E_a\}$ has measure zero. 

We now prove some technical results on $\Sigma$ and $Q_2$ that will be needed in the following \cref{sec:2nd-OC:soc}. To keep the notation concise, from now on we will simply write $\zeta_i(s,h):=\zeta_i(\bar u, \bar y; s,h)$ and $A_n(\{s_n\},\{h_n\}):=A_n(\bar u,\bar y; \{s_n\},\{h_n\})$.
\begin{lemma}
    \label{lem:invariant}
    Let $\{s_n\} \in c_0^+$ be arbitrary and let $\{h_n\}, \{v_n\} \subset L^2(\Omega)$ be such that $h_n \rightharpoonup h$ and $v_n \rightharpoonup h$ in $L^2(\Omega)$ for some $h \in L^2(\Omega)$. If $\Sigma(\bar y) < \infty$ and $\bar \varphi \in W^{1,\infty}(\Omega)$, then 
    \begin{equation*}
        \lim_{n \to \infty} \frac{1}{s_n^2} \int_\Omega \left[ A_n(\{s_n\}, \{h_n\}) - A_n(\{s_n\}, \{v_n\}) \right] \nabla \bar y \cdot \nabla \bar \varphi \dx = 0.
    \end{equation*}
\end{lemma}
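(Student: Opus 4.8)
The plan is to split $A_n$ into its finitely many summands $\zeta_i$, each of which into its two parts, and to estimate the contribution of each part to the difference separately, showing it tends to $0$. For brevity write $\zeta_i(s,h) = \zeta_i^{(2)}(s,h) + \zeta_i^{(3)}(s,h)$ for the two summands in the definition of $\zeta_i$ (those carrying the sets $\Omega^{i,2}_{\cdot,\bar y}$ and $\Omega^{i,3}_{\cdot,\bar y}$). The key input is \cref{cor:diff}: applied with a fixed $p\in(N,6)$ and with $u=\bar u$, it gives that
\[
    z_n := \frac{S(\bar u + s_nh_n) - \bar y}{s_n} \qquad\text{and}\qquad \tilde z_n := \frac{S(\bar u + s_nv_n) - \bar y}{s_n}
\]
both converge in $C(\overline\Omega)$ to the \emph{same} limit $S'(\bar u)h$, since $h_n$ and $v_n$ have the common weak limit $h$. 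Hence $\varepsilon_n := \norm{z_n - \tilde z_n}_{C(\overline\Omega)} \to 0$ while $M := \sup_n\bigl(\norm{z_n}_{C(\overline\Omega)} + \norm{\tilde z_n}_{C(\overline\Omega)}\bigr) < \infty$; in particular $\norm{S(\bar u+s_nh_n) - \bar y}_{C(\overline\Omega)} \le s_nM$, and likewise for $v_n$.

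\textbf{Termwise estimate.} Fix $i\in\{0,\dots,\barK\}$. On $\Omega^{i,2}_{S(\bar u+s_nh_n),\bar y}$ one has $\bar y > t_i$ and $S(\bar u+s_nh_n)\le t_i$, so $0 < \bar y - t_i \le \bar y - S(\bar u+s_nh_n) \le s_nM$; hence $\zeta_i^{(2)}(s_n,h_n)$ and $\zeta_i^{(2)}(s_n,v_n)$ both vanish outside $\{0 < \bar y - t_i \le s_nM\}$. For $n$ large enough that $s_nM<\delta$, on that window the remaining defining constraints $\bar y\in(t_i,t_i+\delta)$ and $S(\bar u+s_nh_n) > t_i-\delta$ hold automatically, so there $\zeta_i^{(2)}(s_n,h_n) = \bigl(a'_{i-1}(t_i) - a'_i(t_i)\bigr)\bigl(t_i - \bar y - s_nz_n\bigr)^+$. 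Since $z\mapsto(t_i-\bar y-s_n z)^+$ is $s_n$‑Lipschitz and both terms live in the common window, this yields the pointwise bound
\[
    \bigl|\zeta_i^{(2)}(s_n,h_n) - \zeta_i^{(2)}(s_n,v_n)\bigr| \le \sigma_i\, s_n\varepsilon_n\, \1_{\{0 < \bar y - t_i \le s_nM\}},
\]
with $\sigma_i = |a'_{i-1}(t_i) - a'_i(t_i)|$ as in \eqref{eq:sigma-i}. The same computation at $t_{i+1}$ (where $\zeta_i^{(3)}$ is, up to sign, the coefficient $a'_i(t_{i+1}) - a'_{i+1}(t_{i+1})$ of modulus $\sigma_{i+1}$ times a positive part of an affine function of $S(\bar u+s_nh_n)$, now supported in $\{0 < t_{i+1} - \bar y \le s_nM\}$) gives $|\zeta_i^{(3)}(s_n,h_n) - \zeta_i^{(3)}(s_n,v_n)| \le \sigma_{i+1}s_n\varepsilon_n\1_{\{0 < t_{i+1}-\bar y \le s_nM\}}$; the borderline cases $i=0$ for $\zeta^{(2)}$ and $i=\barK$ for $\zeta^{(3)}$ are trivial, the relevant jump coefficient being zero by the conventions stated just before \eqref{eq:A-n}.

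\textbf{Using $\Sigma(\bar y)<\infty$ and concluding.} From \eqref{eq:E-functional} there are $r_0>0$ and $C_0>0$ with $\sum_{m=1}^N\sum_{j=1}^{\barK}\sigma_j\int_\Omega\1_{\{0<|\bar y-t_j|\le r\}}|\partial_{x_m}\bar y|\dx \le C_0 r$ for all $r\in(0,r_0]$; since $|\nabla\bar y| \le \sum_m|\partial_{x_m}\bar y|$ and $\{0<\bar y-t_i\le r\}\subset\{0<|\bar y-t_i|\le r\}$, $\{0<t_{i+1}-\bar y\le r\}\subset\{0<|\bar y-t_{i+1}|\le r\}$, this gives $\sigma_i\int_\Omega\1_{\{0<\bar y-t_i\le r\}}|\nabla\bar y|\dx\le C_0 r$ and the analogous bound for the $t_{i+1}$‑window. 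Taking $n$ large enough that $s_nM\le r_0$, multiplying the pointwise estimates above by $\frac{1}{s_n^2}|\nabla\bar y\cdot\nabla\bar\varphi| \le \frac{1}{s_n^2}\norm{\nabla\bar\varphi}_{L^\infty(\Omega)}|\nabla\bar y|$ and integrating, the two powers of $s_n$ cancel and each term is bounded by $C_0 M\norm{\nabla\bar\varphi}_{L^\infty(\Omega)}\varepsilon_n \to 0$. Summing over the finitely many $i\in\{0,\dots,\barK\}$ and using $|\int_\Omega\cdot| \le \int_\Omega|\cdot|$ then yields the claimed limit. The main obstacle is that a crude magnitude estimate on each $\zeta_i$ term does not suffice: on its $O(s_n)$-wide window the $|\nabla\bar y|$-mass is $O(s_n)$ by $\Sigma(\bar y)<\infty$, and $\zeta_i$ itself is $O(s_n)$ there, so after dividing by $s_n^2$ one only obtains a \emph{bounded} quantity. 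The decay stems entirely from the extra factor $\varepsilon_n$, which is harvested by keeping the exact positive-part form of $\zeta_i$ on its support and by exploiting that $z_n$ and $\tilde z_n$ share the same limit — which in turn rests on correctly identifying which of the constraints defining $\Omega^{i,2}_{\cdot,\bar y}$ and $\Omega^{i,3}_{\cdot,\bar y}$ become inactive once $n$ is large.
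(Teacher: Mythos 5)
Your proof is correct and follows essentially the same route as the paper's: both hinge on \cref{cor:diff} giving $\|S(\bar u+s_nh_n)-S(\bar u+s_nv_n)\|_{C(\overline\Omega)}=s_n\varepsilon_n$ with $\varepsilon_n\to0$, localize each $\zeta_i$-difference to an $O(s_n)$-wide window around $t_i$, and use $\Sigma(\bar y)<\infty$ to bound the $|\nabla\bar y|$-mass of that window by $O(s_n)$, leaving $O(\varepsilon_n)$ after division by $s_n^2$. Your identification of $\zeta_i^{(2)}$ as a positive part of an affine function on the common window, combined with the $1$-Lipschitzness of $t\mapsto t^+$, is a slightly cleaner (and tighter by a factor of $2$) version of the paper's two-piece splitting of the corresponding terms $M_{n,i}$ and $L_{n,i}$, but it is not a genuinely different argument.
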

\begin{proof}
    Setting $y_n := S(\bar u + s_n h_n)$, $w_n := S(\bar u + s_n v_n)$ and exploiting the definition \eqref{eq:A-n} yields
    \begin{equation}
        \label{eq:invar-auxi1}
        A_n(\{s_n\}, \{h_n\}) - A_n(\{s_n\}, \{v_n\}) 
        =  \sum_{i =0}^{\barK } \left\{[a'_{i-1}(t_i) - a'_i(t_i)] M_{n,i} +[a'_{i+1}(t_{i+1}) - a'_i(t_{i+1})] L_{n,i} \right\}
    \end{equation}
    with 
    \begin{equation*}
        M_{n,i} := \1_{\Omega_{y_n, \bar y }^{i,2}}(t_i-y_n) - \1_{\Omega_{w_n, \bar y }^{i,2}}(t_i -w_n), \quad L_{n,i} := \1_{\Omega_{y_n, \bar y }^{i,3}}(t_{i+1}-y_n) - \1_{\Omega_{w_n, \bar y }^{i,3}}(t_{i+1}-w_n).
    \end{equation*}
    Setting $r_n := \norm{y_n-w_n}_{L^\infty(\Omega)}$ and $\kappa_n := \max\{\norm{y_n- \bar y }_{L^\infty(\Omega)}, \norm{w_n- \bar y }_{L^\infty(\Omega)} \}$ gives 
    \begin{equation}
        \label{eq:invar-limits}
        \kappa_n  \leq Cs_n \quad \text{and} \quad \frac{r_n}{s_n} \to 0
    \end{equation}
    for some positive constant $C$ due to the differentiability of $S$ and \cref{cor:diff}, respectively.  We can thus assume that $0 \leq \kappa_n, r_n < \delta$ for all $n \in\N$ large enough. Writing 
    \begin{equation*}
        M_{n,i} := - \1_{\Omega_{y_n, \bar y }^{i,2}}(y_n- w_n) - \left(\1_{ \Omega_{{y_n}, \bar y }^{i,2} } -  \1_{\Omega_{w_n, \bar y }^{i,2}} \right) (w_n- t_i) 
    \end{equation*}
    and using the fact that
    \begin{equation*}
        \Omega_{y_n, \bar y }^{i,2} := \left\{ \bar y \in (t_i, t_i +\delta), y_n \in (t_i-\delta, t_i] \right\} \subset \left\{ 0 < \bar y - t_i \leq \kappa_n \right\},
    \end{equation*}
    we derive
    \begin{equation*}
        |M_{n,i}| \leq \1_{\left\{ 0 < \bar y - t_i \leq \kappa_n \right\} } r_n + \left|\left( \1_{ \Omega_{{y_n}, \bar y }^{i,2} } -  \1_{\Omega_{w_n, \bar y }^{i,2}} \right)(w_n- t_i) \right|.
    \end{equation*}
    On the other hand, by a simple calculation, it holds that
    \begin{multline*}
        \left| \left(\1_{ \Omega_{{y_n}, \bar y }^{i,2} } -  \1_{\Omega_{w_n, \bar y }^{i,2}} \right) (w_n- t_i) \right| \\
        \begin{aligned}
            &=  \1_{ \left\{ \bar y \in (t_i, t_i + \delta )\right\}}  \left| \1_{ \{ y_n \in (t_i - \delta, t_i] \} } -  \1_{\{ w_n \in (t_i - \delta, t_i] \} } \right|  |w_n- t_i| \\
            & = \1_{ \left\{ \bar y \in (t_i, t_i + \delta )\right\}} \left| \1_{ \{ y_n \in (t_i - \delta, t_i], w_n \in (t_i,t_i + r_n] \} } -  \1_{\{ w_n \in (t_i - \delta, t_i], y_n \in (t_i, t_i+r_n] \} } \right|  |w_n- t_i|\\
            & \leq \1_{ \left\{ 0 < \bar y - t_i \leq \kappa_n \right\}} r_n.
        \end{aligned}
    \end{multline*}
    Here we have exploited the facts that
    \begin{align*}
        & \left\{ \bar y \in (t_i, t_i + \delta ), y_n \in (t_i - \delta, t_i] \right\} \cup \left\{ \bar y \in (t_i, t_i + \delta ), w_n \in (t_i - \delta, t_i] \right\} \subset \left\{ 0 < \bar y - t_i \leq \kappa_n \right\}
    \end{align*}
    and $|w_n - t_i| \leq |w_n - y_n| \leq r_n$ almost everywhere on $\{ w_n \in (t_i - \delta, t_i], y_n \in (t_i, t_i+r_n] \}$. We therefore have
    \begin{align}
        \label{eq:invar-auxi2}
        |M_{n,i}| &\leq  2r_n\1_{\left\{ 0 < \bar y - t_i \leq \kappa_n \right\} }.
        \intertext{Similarly, there holds that}
        \label{eq:invar-auxi3}
        |L_{n,i}| &\leq 2r_n \1_{\left\{ 0 < t_{i+1} - \bar y \leq \kappa_n \right\} }.
    \end{align}
    Inserting \eqref{eq:invar-auxi2} and \eqref{eq:invar-auxi3} into \eqref{eq:invar-auxi1} and exploiting the obtained  result as well as \eqref{eq:sigma-i} yields 
    \begin{multline*}
        \frac{1}{s_n^2} \int_\Omega \left| \left[ A_n(\{s_n\}, \{h_n\}) - A_n(\{s_n\}, \{v_n\}) \right] \nabla \bar y \cdot \nabla \bar \varphi  \right|\dx  \\
        \begin{aligned}
            &  \leq \frac{2r_n}{s_n^2} \int_\Omega \sum_{i =0}^{\barK } \left[ \sigma_{i} \1_{\left\{ 0 < \bar y - t_i \leq \kappa_n \right\} } + \sigma_{i+1} \1_{\left\{ 0 < t_{i+1} - \bar y \leq \kappa_n \right\} } \right] \left| \nabla \bar y \cdot \nabla \bar\varphi \right| \dx \\
            & = \frac{2r_n \kappa_n}{s_n^2} \times \frac{1}{\kappa_n} \int_\Omega \sum_{i =1}^{\barK } \sigma_{i} \1_{\left\{ 0 < |\bar y - t_i| \leq \kappa_n \right\} }\left| \nabla \bar y \cdot \nabla \bar\varphi \right| \dx.
        \end{aligned}
    \end{multline*}
    Here we have used the fact that $\1_{\left\{ 0 < \bar y - t_{0} \leq \kappa_n \right\} } \equiv \1_{\left\{ 0 < t_{\barK+1} - \bar y \leq \kappa_n \right\} } \equiv 0$.
    Letting $n \to \infty$ and using the definition \eqref{eq:E-functional}, we arrive at 
    \begin{equation*}
        \lim_{n \to \infty} \frac{1}{s_n^2} \int_\Omega \left| \left[ A_n(\{s_n\}, \{h_n\}) - A_n(\{s_n\}, \{v_n\}) \right] \nabla \bar y \cdot \nabla \bar \varphi\right|  \dx  \leq \norm{\nabla \bar\varphi}_{L^\infty(\Omega)}\Sigma(\bar y)  \lim_{n \to \infty}\frac{2r_n \kappa_n}{s_n^2},
    \end{equation*}
    which, together with \eqref{eq:invar-limits}, completes the proof.
\end{proof}
Combining this with \eqref{eq:key-term-sn} and \eqref{eq:key-term}, we obtain the following estimate.
\begin{corollary}\label{cor:sigma-term-bound}
    If $\Sigma(\bar y) < \infty$ and $\bar \varphi \in W^{1,\infty}(\Omega)$, then 
    for any $\{s_n\} \in c_0^+$ and any $h_n \rightharpoonup h$ in $L^2(\Omega)$,
    \begin{equation*}
        \liminf_{n \to \infty} \frac{1}{s_n^2} \int_\Omega A_n(\{s_n\}, \{h_n\}) \nabla \bar y \cdot \nabla \bar \varphi \dx  = \tilde{Q}(\bar u,\bar y,\bar \varphi; \{s_n\},h) \geq Q_2(\bar u,\bar y,\bar \varphi; h).
    \end{equation*}
\end{corollary}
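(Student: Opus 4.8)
The plan is to obtain the corollary as an essentially immediate consequence of \cref{lem:invariant} combined with the defining formulas \eqref{eq:key-term-sn} and \eqref{eq:key-term}. The crucial observation is that the constant sequence $v_n \equiv h$ trivially satisfies $v_n \rightharpoonup h$ in $L^2(\Omega)$, so that \cref{lem:invariant}, applied with this choice of $\{v_n\}$ together with the given sequence $\{h_n\}$ (and using $\Sigma(\bar y) < \infty$, $\bar\varphi \in W^{1,\infty}(\Omega)$), yields
\[
    \lim_{n \to \infty} \frac{1}{s_n^2} \int_\Omega \bigl[ A_n(\{s_n\}, \{h_n\}) - A_n(\{s_n\}, \{h\}) \bigr] \nabla \bar y \cdot \nabla \bar \varphi \dx = 0.
\]

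Next I would invoke the elementary fact that adding a null sequence of real numbers to a given sequence does not change its $\liminf$. Decomposing
\[
    \frac{1}{s_n^2} \int_\Omega A_n(\{s_n\}, \{h_n\}) \nabla \bar y \cdot \nabla \bar \varphi \dx = \frac{1}{s_n^2} \int_\Omega \bigl[ A_n(\{s_n\}, \{h_n\}) - A_n(\{s_n\}, \{h\}) \bigr] \nabla \bar y \cdot \nabla \bar \varphi \dx + \frac{1}{s_n^2} \int_\Omega A_n(\{s_n\}, \{h\}) \nabla \bar y \cdot \nabla \bar \varphi \dx
\]
and passing to the $\liminf$, the first term on the right vanishes in the limit by the step above, so the $\liminf$ of the left-hand side equals $\liminf_{n\to\infty} \frac{1}{s_n^2} \int_\Omega A_n(\{s_n\}, \{h\}) \nabla \bar y \cdot \nabla \bar \varphi \dx$, which is by definition \eqref{eq:key-term-sn} exactly $\tilde{Q}(\bar u, \bar y, \bar \varphi; \{s_n\}, h)$. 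This establishes the asserted equality.

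Finally, the inequality $\tilde{Q}(\bar u, \bar y, \bar \varphi; \{s_n\}, h) \geq Q_2(\bar u, \bar y, \bar \varphi; h)$ follows directly from the definition \eqref{eq:key-term}, since $Q_2(\bar u, \bar y, \bar \varphi; h)$ is by construction the infimum of $\tilde{Q}(\bar u, \bar y, \bar \varphi; \{s_n\}, h)$ over all $\{s_n\} \in c_0^+$, and the given sequence $\{s_n\}$ is one admissible competitor in that infimum. I do not expect any genuine obstacle here: the entire substance of the corollary is the ``invariance under weak perturbation of the direction'' already proved in \cref{lem:invariant}, and everything else is routine bookkeeping with $\liminf$ and the definitions. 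The only point meriting a moment's attention is the (trivial) verification that the constant sequence is an admissible choice of $\{v_n\}$ in \cref{lem:invariant}.
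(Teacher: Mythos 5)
Your proposal is correct and follows exactly the route the paper intends: the paper derives this corollary directly from \cref{lem:invariant} (applied with the constant sequence $v_n \equiv h$) together with the definitions \eqref{eq:key-term-sn} and \eqref{eq:key-term}, which is precisely your argument. The bookkeeping with the $\liminf$ and the admissibility of the constant sequence are handled correctly.
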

We can use this result to show weak lower semi-continuity of $Q_2$.
\begin{proposition}\label{prop:sigma-wlsc}
    If $\Sigma(\bar y) < \infty$ and $\bar \varphi \in W^{1,\infty}(\Omega)$, then 
    for any $h_n \rightharpoonup h$ in $L^2(\Omega)$,
    \begin{equation*}
        Q_2(\bar u,\bar y,\bar\varphi;h) \leq \liminf_{n \to \infty} Q_2(\bar u,\bar y,\bar\varphi;h_n) .
    \end{equation*}
\end{proposition}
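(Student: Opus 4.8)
The plan is to reduce the claim to \cref{cor:sigma-term-bound} by a diagonal extraction over the null sequences entering the definition \eqref{eq:key-term} of $Q_2$. Put $\ell := \liminf_{n\to\infty} Q_2(\bar u,\bar y,\bar\varphi;h_n)$. If $\ell = +\infty$ there is nothing to prove, so assume $\ell < +\infty$ and pass to a subsequence, still denoted $\{h_n\}$ and still satisfying $h_n \rightharpoonup h$, along which $Q_2(\bar u,\bar y,\bar\varphi;h_n)\to\ell$. I first record that under the present hypotheses $Q_2(\bar u,\bar y,\bar\varphi;\cdot)$ never equals $-\infty$: since $\Omega_{S(\bar u+sv),\bar y}^{i,2}\cup\Omega_{S(\bar u+sv),\bar y}^{i,3}\subset\{0<|\bar y-t_i|\le\kappa\}$ with $\kappa:=\norm{S(\bar u+sv)-\bar y}_{L^\infty(\Omega)}\le Cs$ (by the Lipschitz continuity of $S$, \cref{lem:Lip}) and $|\zeta_i(\bar u,\bar y;s,v)|\le\sigma_i\kappa\,\1_{\{0<|\bar y-t_i|\le\kappa\}}$, the finiteness $\Sigma(\bar y)<\infty$ together with $\bar\varphi\in W^{1,\infty}(\Omega)$ and \eqref{eq:E-functional} yield a uniform lower bound for the quotients in \eqref{eq:key-term-sn}, exactly by the estimates used in the proof of \cref{lem:invariant}. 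In particular $Q_2(\bar u,\bar y,\bar\varphi;h_n)\in\R$ for all large $n$.

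Now comes the diagonal extraction. Fix such an $n$. By \eqref{eq:key-term} choose $\{s^{(n)}_m\}_m\in c_0^+$ with $\tilde Q(\bar u,\bar y,\bar\varphi;\{s^{(n)}_m\}_m,h_n)\le Q_2(\bar u,\bar y,\bar\varphi;h_n)+\tfrac1n$. Since $s^{(n)}_m\to0$ and, by \eqref{eq:key-term-sn}, $\tilde Q(\bar u,\bar y,\bar\varphi;\{s^{(n)}_m\}_m,h_n)$ is a $\liminf$ over $m$ of the quotients
\[
\frac{1}{(s^{(n)}_m)^2}\int_\Omega \Big(\sum_{i=0}^{\barK}\zeta_i(\bar u,\bar y;s^{(n)}_m,h_n)\Big)\,\nabla\bar y\cdot\nabla\bar\varphi\dx ,
\]
I may pick an index $m(n)$ so that $\sigma_n := s^{(n)}_{m(n)} < \tfrac1n$ and the quotient at $m(n)$ is at most $Q_2(\bar u,\bar y,\bar\varphi;h_n)+\tfrac2n$. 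The key point is that, by \eqref{eq:A-n}, this quotient equals $\tfrac1{\sigma_n^2}\int_\Omega A_n(\bar u,\bar y;\{\sigma_n\}_n,\{h_n\}_n)\,\nabla\bar y\cdot\nabla\bar\varphi\dx$: it depends on the auxiliary sequence $\{s^{(n)}_m\}_m$ only through the single scalar $\sigma_n$. Moreover $\sigma_n<\tfrac1n$ forces $\{\sigma_n\}_n\in c_0^+$, while $\{h_n\}_n$ is by construction weakly convergent to $h$.

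Applying \cref{cor:sigma-term-bound} with the null sequence $\{\sigma_n\}_n$ and the direction sequence $h_n\rightharpoonup h$ gives
\[
Q_2(\bar u,\bar y,\bar\varphi;h)\;\le\;\liminf_{n\to\infty}\frac{1}{\sigma_n^2}\int_\Omega A_n(\bar u,\bar y;\{\sigma_n\}_n,\{h_n\}_n)\,\nabla\bar y\cdot\nabla\bar\varphi\dx\;\le\;\liminf_{n\to\infty}\Big(Q_2(\bar u,\bar y,\bar\varphi;h_n)+\tfrac2n\Big)\;=\;\ell ,
\]
where the last equality uses $Q_2(\bar u,\bar y,\bar\varphi;h_n)\to\ell$ along the chosen subsequence. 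Since $\ell$ is the value of the original $\liminf$, this proves the proposition.

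The substantive content of the argument sits in \cref{cor:sigma-term-bound}, hence in \cref{lem:invariant}, where $\Sigma(\bar y)<\infty$ is used to absorb the effect of replacing a constant direction by a weakly convergent one. The only genuine obstacle at the level of the present proof is the nested $\inf$/$\liminf$ structure of $Q_2$: one must replace each nearly optimal null sequence $\{s^{(n)}_m\}_m$ for $h_n$ by a single number $\sigma_n$ such that $\{\sigma_n\}_n$ is still a null sequence and the defect quotient at $(\sigma_n,h_n)$ still lies within $\tfrac2n$ of $Q_2(\bar u,\bar y,\bar\varphi;h_n)$ — which is possible precisely because, by \eqref{eq:A-n}, $A_n$ depends only on the $n$-th entries of the two sequences, so that \cref{cor:sigma-term-bound} can then be invoked verbatim.
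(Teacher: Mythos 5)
Your proof is correct and follows essentially the same route as the paper: a diagonal extraction that replaces, for each $n$, a near-optimal null sequence for $h_n$ by a single scalar $\sigma_n<\tfrac1n$ whose defect quotient is within $\tfrac2n$ of $Q_2(\bar u,\bar y,\bar\varphi;h_n)$, followed by an application of \cref{cor:sigma-term-bound} to the resulting null sequence and the weakly convergent directions. The only cosmetic differences are that you pass to a subsequence realizing the $\liminf$ up front and explicitly dispose of the cases $\ell=+\infty$ and $Q_2=-\infty$, which the paper handles implicitly.
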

\begin{proof}
    Let $\{h_n\} \subset L^2(\Omega)$ be arbitrary such that $h_n \rightharpoonup h$ in $L^2(\Omega)$. Fixing $n \in\N$ and using the definition \eqref{eq:key-term} shows that there exists a sequence $\{s_j^k(h_n)\}_{j, k \in \N} \in c_0^+$ such that 
    \begin{equation} \label{eq:limit-wlsc}
        s_j^k(h_n) \to 0^+ \text{ as } j \to \infty \quad \text{for all } k \in\N
    \end{equation}
    and 
    \begin{equation*}
        Q_2(\bar u,\bar y,\bar\varphi;h_n) = \lim_{k \to \infty} \tilde{Q}(\bar u,\bar y,\bar\varphi;\{s_j^k(h_n)\}_{j \in \N} , h_n).
    \end{equation*}
    There thus exists a $k_n \geq n$ satisfying 
    \begin{equation}
        \label{eq:weakly-lsc-1}
        Q_2(\bar u,\bar y,\bar\varphi;h_n) - \tilde{Q}(\bar u,\bar y,\bar\varphi;\{s_j^{k_n}(h_n)\}_{j \in \N} , h_n) > - \frac{1}{n}.
    \end{equation}
    The limit in \eqref{eq:limit-wlsc} leads to the existence of a $j_n \in \N$ such that
    \begin{equation}
        \label{eq:lwsc-zero-sequence}
        0 < s_{j}^{k_n}(h_n) < \frac{1}{n} \quad \text{for all } j \geq j_n.
    \end{equation}
    Furthermore, from \eqref{eq:key-term-sn} and \eqref{eq:A-n}, a subsequence $\{s_{j_q}^{k_n}(h_n)\}_{q \in \N}$ of $\{s_j^{k_n}(h_n)\}_{j \in \N}$ exists satisfying
    \begin{equation*}
        \tilde{Q}(\bar u,\bar y,\bar\varphi;\{s_j^{k_n}(h_n)\}_{j \in \N} , h_n) = \lim_{q \to \infty} \frac{1}{(s_{j_q}^{k_n}(h_n))^2} \int_\Omega \sum_{i =0}^{\barK } \zeta_i(s_{j_q}^{k_n}(h_n), h_n)  \nabla \bar y \cdot \nabla \bar \varphi \dx. 
    \end{equation*}
    Then there is a $q_n \in \N$ satisfying $j_{q_n} \geq j_n$ and
    \begin{equation}
        \label{eq:weakly-lsc-2}
        \tilde{Q}(\bar u,\bar y,\bar\varphi;\{s_j^{k_n}(h_n)\}_{j \in \N} , h_n) - \frac{1}{r_n^2} \int_\Omega \sum_{i =0}^{\barK }  \zeta_i(r_n, h_n)  \nabla \bar y \cdot \nabla \bar \varphi \dx > - \frac{1}{n}
    \end{equation}
    with $r_n := s_{j_{q_n}}^{k_n}(h_n)$. By \eqref{eq:lwsc-zero-sequence}, we have $r_n \to 0^+$ as $n \to \infty$ and so $\{r_n\}_{n \in \N} \in c_0^+$. 
    On the other hand, adding \eqref{eq:weakly-lsc-1} and \eqref{eq:weakly-lsc-2} yields that
    \begin{equation*}
        Q_2(\bar u,\bar y,\bar\varphi;h_n)  - \frac{1}{r_n^2} \int_\Omega \sum_{i =0}^{\barK } \zeta_i(r_n, h_n)  \nabla \bar y \cdot \nabla \bar \varphi \dx  > -\frac{2}{n}.
    \end{equation*}
    Taking the limit inferior then shows that 
    \begin{equation*} 
        \begin{aligned}
            \liminf _{n \to \infty} Q_2(\bar u,\bar y,\bar\varphi;h_n) & \geq \liminf _{n \to \infty}\frac{1}{r_n^2} \int_\Omega \sum_{i =0}^{\barK } \zeta_i(r_n, h_n)  \nabla \bar y \cdot \nabla \bar \varphi \dx \\
            & = \liminf _{n \to \infty}\frac{1}{r_n^2} \int_\Omega A_n(\{r_n\}, \{h_n\}) \nabla \bar y \cdot \nabla \bar \varphi \dx,
        \end{aligned}
    \end{equation*}
    where we have used the definition \eqref{eq:A-n} to obtain the last identity. Together with \cref{cor:sigma-term-bound}, this yields the claim.
\end{proof}

\begin{lemma}
    \label{prop:sigma-term}
    Assume that $\Sigma(\bar y) < \infty$ and $\bar \varphi \in W^{1,\infty}(\Omega)$, then for any $h \in L^2(\Omega)$,     
    \begin{equation*}
        \left| Q_2(\bar u,\bar y,\bar\varphi;h) \right| \leq \Sigma(\bar y) \norm{\nabla \bar\varphi}_{L^\infty(\Omega)} \norm{S'(\bar u)h}_{L^\infty(\Omega)}^2.
    \end{equation*}    
\end{lemma}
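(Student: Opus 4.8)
The plan is to bound $\lvert \tilde{Q}(\bar u,\bar y,\bar \varphi;\{s_n\},h)\rvert$ by the right-hand side of the claimed inequality for \emph{every} sequence $\{s_n\}\in c_0^+$. Since $Q_2(\bar u,\bar y,\bar\varphi;h)=\inf_{\{s_n\}\in c_0^+}\tilde{Q}(\bar u,\bar y,\bar\varphi;\{s_n\},h)$ by \eqref{eq:key-term}, such a uniform two-sided bound immediately yields $Q_2(\bar u,\bar y,\bar\varphi;h)\le \Sigma(\bar y)\norm{\nabla\bar\varphi}_{L^\infty(\Omega)}\norm{S'(\bar u)h}_{L^\infty(\Omega)}^2$ (take any admissible sequence) as well as $Q_2(\bar u,\bar y,\bar\varphi;h)\ge -\Sigma(\bar y)\norm{\nabla\bar\varphi}_{L^\infty(\Omega)}\norm{S'(\bar u)h}_{L^\infty(\Omega)}^2$ (every $\tilde Q$ exceeds the negative bound, hence so does the infimum); in particular $Q_2(\bar u,\bar y,\bar\varphi;h)$ is finite.

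First I would fix $\{s_n\}\in c_0^+$, write $y_n:=S(\bar u+s_nh)$, $z:=S'(\bar u)h$, and $\kappa_n:=\norm{y_n-\bar y}_{L^\infty(\Omega)}$. Applying \cref{cor:diff} to the constant sequence $h_n\equiv h$ gives $(y_n-\bar y)/s_n\to z$ in $C(\overline\Omega)$, so $\kappa_n\to 0$ and $\kappa_n/s_n\to\norm{z}_{L^\infty(\Omega)}$; we may assume $0<\kappa_n<\delta$ for all $n$ (the subsequence on which $y_n=\bar y$ contributes a vanishing term and is handled trivially). The key step is the pointwise estimate
\begin{equation*}
    \lvert\zeta_i(s_n,h)\rvert \le \sigma_i\kappa_n\1_{\{0<\bar y-t_i\le\kappa_n\}} + \sigma_{i+1}\kappa_n\1_{\{0<t_{i+1}-\bar y\le\kappa_n\}}, \qquad 0\le i\le\barK,
\end{equation*}
with the conventions $\sigma_0=\sigma_{\barK+1}=0$ and $a_{-1}\equiv a_{\barK+1}\equiv 0$. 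This follows from $\lvert a'_{i-1}(t_i)-a'_i(t_i)\rvert=\sigma_i$ and $\lvert a'_{i+1}(t_{i+1})-a'_i(t_{i+1})\rvert=\sigma_{i+1}$ together with the inclusions $\Omega_{y_n,\bar y}^{i,2}\subset\{0<\bar y-t_i\le\kappa_n\}$ and $\Omega_{y_n,\bar y}^{i,3}\subset\{0<t_{i+1}-\bar y\le\kappa_n\}$, and the bounds $\lvert t_i-y_n\rvert\le\kappa_n$ on $\Omega_{y_n,\bar y}^{i,2}$ and $\lvert t_{i+1}-y_n\rvert\le\kappa_n$ on $\Omega_{y_n,\bar y}^{i,3}$, all read off directly from the definitions of these sets in \cref{lem:T-decomposition} exactly as in the proof of \cref{lem:invariant}. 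Summing over $i$ and re-indexing then gives $\lvert A_n(\{s_n\},\{h\})\rvert \le \kappa_n\sum_{i=1}^{\barK}\sigma_i\1_{\{0<\lvert\bar y-t_i\rvert\le\kappa_n\}}$.

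Finally I would insert this into \eqref{eq:key-term-sn}. Using $\lvert\nabla\bar y\cdot\nabla\bar\varphi\rvert\le\norm{\nabla\bar\varphi}_{L^\infty(\Omega)}\sum_{m=1}^{N}\lvert\partial_{x_m}\bar y\rvert$ and factoring out $\kappa_n^2/s_n^2$,
\begin{equation*}
    \frac{1}{s_n^2}\left\lvert\int_\Omega A_n(\{s_n\},\{h\})\,\nabla\bar y\cdot\nabla\bar\varphi\dx\right\rvert \le \frac{\kappa_n^2}{s_n^2}\,\norm{\nabla\bar\varphi}_{L^\infty(\Omega)}\cdot\frac{1}{\kappa_n}\sum_{m=1}^{N}\sum_{i=1}^{\barK}\sigma_i\int_\Omega\1_{\{0<\lvert\bar y-t_i\rvert\le\kappa_n\}}\lvert\partial_{x_m}\bar y\rvert\dx.
\end{equation*}
Taking $\limsup_{n\to\infty}$, the first factor converges to $\norm{z}_{L^\infty(\Omega)}^2$ and, because $\kappa_n\to 0^+$, the $\limsup$ of the last factor is at most $\Sigma(\bar y)$ by definition \eqref{eq:E-functional}; hence the left-hand side has $\limsup$ at most $\Sigma(\bar y)\norm{\nabla\bar\varphi}_{L^\infty(\Omega)}\norm{S'(\bar u)h}_{L^\infty(\Omega)}^2$, which bounds $\lvert\tilde Q(\bar u,\bar y,\bar\varphi;\{s_n\},h)\rvert$, and the conclusion follows as described in the first paragraph. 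The only mild obstacle is the bookkeeping in the pointwise estimate above and the boundary conventions at $i\in\{0,\barK\}$, but this is essentially identical to what was already carried out in \cref{lem:invariant}.
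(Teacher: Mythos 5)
Your proposal is correct and follows essentially the same route as the paper: reduce to a uniform bound on $|\tilde Q(\bar u,\bar y,\bar\varphi;\{s_n\},h)|$ for every $\{s_n\}\in c_0^+$, estimate $|\zeta_i(s_n,h)|$ pointwise via the inclusions $\Omega_{y_n,\bar y}^{i,2}\subset\{0<\bar y-t_i\le\kappa_n\}$ and $\Omega_{y_n,\bar y}^{i,3}\subset\{0<t_{i+1}-\bar y\le\kappa_n\}$, factor out $\kappa_n^2/s_n^2$, and pass to the limit using \cref{cor:diff} and the definition \eqref{eq:E-functional} of $\Sigma$. No gaps.
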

\begin{proof}
    It suffices to show for any $\{s_n\} \in c_0^+$ that
    \begin{equation}
        \label{eq:sigma-bound}
        \left| \tilde Q(\bar u,\bar y,\bar\varphi;\{s_n\},h) \right| \leq \Sigma(\bar y) \norm{\nabla \bar\varphi}_{L^\infty(\Omega)} \norm{S'(\bar u)h}_{L^\infty(\Omega)}^2.
    \end{equation}
    To this end, we first set $y_n := S(\bar u + s_n h)$ and take $p \in (N,6)$ arbitrary. By the fact that $s_n \to 0^+$ and $L^2(\Omega) \Subset W^{-1,p}(\Omega)$, we have $\bar u + s_nh \to \bar u$ in $W^{-1,p}(\Omega)$ and thus $y_n \to \bar y$ in $W^{1,p}_0(\Omega)$. From this and the embedding $W^{1,p}_0(\Omega) \hookrightarrow C(\overline\Omega)$, it holds that
    \begin{equation*}
        \kappa_n := \norm{y_n - \bar y}_{C(\overline\Omega)} \to 0.
    \end{equation*}
    We can thus assume that $\kappa_n < \delta$ for all $n \in\N$ large enough. 
    Moreover, from \eqref{eq:A-n}, there holds
    \begin{equation}
        \label{eq:A-n-auxi1}
        A_n(\{s_n\},\{h\}) := \sum_{i =0}^{\barK } \zeta_i(s_n,h)  =\sum_{i \in I_{\bar y}} \zeta_i(s_n,h).
    \end{equation}
    We see from \eqref{eq:sigma-i} and the definition of $\zeta_i(s_n,h)$, $\Omega_{y_n, \bar y }^{i,2}$ and $\Omega_{y_n, \bar y }^{i,3}$ that
    \begin{align*}
        \sum_{i \in I_{\bar y}} \left| \zeta_i(s_n,h) \right|  &\leq  \sum_{i \in I_{\bar y} } \left[  \1_{ \{ \bar y \in (t_{i}, t_{i}+\delta ), y_n \in (t_{i}-\delta, t_{i}] \} } \sigma_{i} + \1_{ \{ \bar y \in (t_{i+1}- \delta, t_{i+1}), y_n \in [t_{i+1}, t_{i+1} + \delta) \} }\sigma_{i+1}  \right] |y_n- \bar y| \\
        & \leq \sum_{i \in I_{\bar y} } \left[ \1_{ \{ 0 < \bar y - t_i \leq \norm{y_n - \bar y}_{C(\overline\Omega)} \} } \sigma_{i} + \1_{ \{ 0 < t_{i+1} - \bar y \leq \norm{y_n - \bar y}_{C(\overline\Omega)} \} }\sigma_{i+1}  \right] \norm{y_n - \bar y}_{C(\overline\Omega)}\\
        & = \sum_{i =1}^{\barK} \1_{ \{ 0 < |\bar y - t_i| \leq \kappa_n \} } \sigma_{i} \norm{y_n - \bar y}_{C(\overline\Omega)}.
    \end{align*}
    Consequently, it holds that
    \begin{equation*}
        \frac{1}{s_n^2} \int_\Omega \sum_{i \in I_{\bar y}}  \left| \zeta_i(s_n,h) \nabla \bar\varphi \cdot \nabla \bar y \right|\dx 
        \leq \norm{\nabla\bar \varphi}_{L^\infty(\Omega)} \frac{\norm{y_n - \bar y}_{C(\overline\Omega)}^2}{s_n^2}   \sum_{m= 1}^N \sum_{i =1}^{\barK} \sigma_i   \frac{1}{\kappa_n} \int_\Omega \1_{ \{ 0 < |\bar y - t_i| \leq \kappa_n \} } \left|  \partial_{x_m} \bar y \right|\dx.
    \end{equation*}
    Passing to the limit, employing \cref{cor:diff}, and using \eqref{eq:E-functional} then yields that
    \begin{equation*}
        \limsup_{n \to \infty} \frac{1}{s_n^2} \int_\Omega \sum_{i \in I_{\bar y}} \left| \zeta_i(s_n,h)  \nabla \bar\varphi \cdot \nabla \bar y \right|\dx \leq \norm{\nabla\bar \varphi}_{L^\infty(\Omega)}\norm{S'(\bar u)h}_{L^\infty(\Omega)}^2 \Sigma(\bar y).
    \end{equation*}
    The combination of this with \eqref{eq:A-n-auxi1} gives \eqref{eq:sigma-bound} and thus the claim.
\end{proof}

The following is the main result of this subsection. 
\begin{proposition}
    \label{lem:limits}
    Let \crefrange{ass:domain}{ass:cost_func} hold. Assume further that $G'(\bar y) \in L^{\bar p}(\Omega)$ for some $\bar p >N$. Let $p \in (N, 6)$ be arbitrary and let  $\{s_n\} \in c_0^+$ and $\{h_n\} \subset  L^2(\Omega)$  be arbitrary such that $h_n \rightharpoonup h$ in $L^2(\Omega)$ for some $h \in L^2(\Omega)$.
    Then the following limits hold:
    \begin{enumerate}[label=(\roman*)]
        \item $\frac{1}{s_n^2}\int_0^1 (1-s)G''(\bar y + s(y_n - \bar y))(y_n - \bar y)^2 \ds \to \frac{1}{2}G''(\bar y)(S'(\bar u)h)^2$  with $y_n := S(\bar u + s_n h_n)$;
        \item $\frac{1}{s_n^2} \int_\Omega \left[a(y_n) - a(\bar y) \right] \nabla \bar \varphi \cdot \left( \nabla y_n - \nabla \bar y \right)\dx \to \int_\Omega a'(\bar y; S'(\bar u)h) \nabla\bar \varphi \cdot \nabla (S'(\bar u)h)\dx$;
        \item if, in addition, $\Sigma(\bar y) < \infty$, then 
            \begin{equation*}
                H_n := \int_\Omega [ a(y_n) - a(\bar y) - \1_{\{\bar y \notin E_{a} \} }a'(\bar y)(y_n - \bar y) ] \nabla \bar \varphi \cdot \nabla \bar y \dx, \quad n \in\N,
            \end{equation*} 
            satisfy
            \begin{equation*}
                \limsup_{n \to \infty} \frac{1}{s_n^2} H_n = \frac{1}{2} \int_\Omega \1_{\{\bar y\notin E_{a} \}}a''(\bar y)(S'(\bar u)h)^2 \nabla \bar \varphi \cdot \nabla \bar y \dx - \tilde{Q}(\bar u,\bar y,\bar \varphi; \{s_n\},h).
            \end{equation*}
    \end{enumerate}
\end{proposition}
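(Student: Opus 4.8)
The plan is to establish the three limits successively. Throughout, write $z_n := s_n^{-1}(y_n - \bar y)$ and $z := S'(\bar u)h$; by \cref{cor:diff}, $z_n \to z$ in $W^{1,p}_0(\Omega) \hookrightarrow C(\overline\Omega)$ and hence also in $H^1_0(\Omega)$, and $\kappa_n := \norm{y_n - \bar y}_{C(\overline\Omega)} = s_n\norm{z_n}_{C(\overline\Omega)}$ satisfies $\kappa_n \to 0$ and $\kappa_n^2/s_n^2 \to \norm{z}_{C(\overline\Omega)}^2$. Moreover, since $(\bar u,\bar y,\bar\varphi)$ solves \eqref{eq:1st-OS} with $G'(\bar y) \in L^{\bar p}(\Omega)$, $\bar p > N$, \cref{lem:adjoint-equation} gives $\bar\varphi \in W^{1,\infty}(\Omega)$, so $\nabla\bar\varphi \in L^\infty(\Omega)^N$.

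For (i), the substitution $y_n - \bar y = s_n z_n$ rewrites the left-hand side as $\int_0^1(1-s)\,G''(\bar y + s\,s_n z_n)(z_n,z_n)\ds$; continuity of $G''$ on $H^1_0(\Omega)$ (\cref{ass:cost_func}), boundedness of $\{z_n\}$ in $H^1_0(\Omega)$, and the convergences $z_n \to z$, $\bar y + s\,s_n z_n \to \bar y$ in $H^1_0(\Omega)$ (uniformly for $s\in[0,1]$) give uniform convergence of the integrand to $G''(\bar y)(z,z)$, whence the limit $\tfrac{1}{2}G''(\bar y)(z,z)$. For (ii), using $\nabla y_n - \nabla\bar y = s_n\nabla z_n$ rewrites the left-hand side as $\int_\Omega\frac{a(y_n)-a(\bar y)}{s_n}\,\nabla\bar\varphi\cdot\nabla z_n\dx$; a pointwise argument on $\overline\Omega$ exploiting the $PC^1$-structure of $a$ and its directional-derivative formula — separating the cases $\bar y(x)\notin E_a$ (mean value theorem on the local $C^1$-branch), $\bar y(x) = t_i$ with $z(x)\neq 0$ (for large $n$, $y_n(x)$ lies strictly on the branch determined by the sign of $z(x)$), and $z(x) = 0$ (the difference quotient is bounded by the local Lipschitz constant of $a$) — yields $\frac{a(y_n)-a(\bar y)}{s_n} \to a'(\bar y;z)$ with a uniform $L^\infty$-bound; splitting $\nabla z_n = (\nabla z_n - \nabla z) + \nabla z$ and using $\nabla\bar\varphi\in L^\infty(\Omega)$, $\nabla z_n \to \nabla z$ in $L^p(\Omega)$, and dominated convergence then gives the limit $\int_\Omega a'(\bar y;z)\,\nabla\bar\varphi\cdot\nabla z\dx$.

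The substance is (iii). Since $\nabla\bar y = 0$ a.e. on $\{\bar y\in E_a\}$, one has $H_n = \int_\Omega T_{y_n,\bar y}\,\nabla\bar\varphi\cdot\nabla\bar y\dx$ with $T_{\cdot,\cdot}$ as in \eqref{eq:T-func}, and, as $\kappa_n < \delta$ for $n$ large, \cref{lem:T-decomposition} gives $\frac{1}{s_n^2}H_n = \sum_{i\in I_{\bar y}}\frac{1}{s_n^2}\int_\Omega\bigl(T_{y_n,\bar y}^{i,1} + T_{y_n,\bar y}^{i,2} + T_{y_n,\bar y}^{i,3}\bigr)\,\nabla\bar\varphi\cdot\nabla\bar y\dx$. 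For the smooth parts, the integral-form Taylor remainder gives $T_{y_n,\bar y}^{i,1} = \1_{\Omega_{y_n,\bar y}^{i,1}}\int_0^1(1-\tau)\,a_i''(\bar y + \tau(y_n-\bar y))(y_n-\bar y)^2\,\mathrm{d}\tau$; since $\1_{\Omega_{y_n,\bar y}^{i,1}} \to \1_{\{\bar y\in(t_i,t_{i+1})\}}$ pointwise and $\nabla\bar y = 0$ a.e. on $\{\bar y = t_i\}\cup\{\bar y = t_{i+1}\}$ (so the boundary set is $|\nabla\bar y|\dx$-negligible), dominated convergence yields $\sum_{i\in I_{\bar y}}\frac{1}{s_n^2}\int_\Omega T_{y_n,\bar y}^{i,1}\,\nabla\bar\varphi\cdot\nabla\bar y\dx \to \tfrac{1}{2}\int_\Omega\1_{\{\bar y\notin E_a\}}a''(\bar y)(S'(\bar u)h)^2\,\nabla\bar\varphi\cdot\nabla\bar y\dx$ (a genuine limit). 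For the crossing parts, expanding $a_{i-1}(y_n)$ about $t_i$ and $a_{i+1}(y_n)$ about $t_{i+1}$ and using the continuity relations $a_{i-1}(t_i) = a_i(t_i)$, $a_{i+1}(t_{i+1}) = a_i(t_{i+1})$ to cancel the terms of first order in $\bar y - t_i$ gives $T_{y_n,\bar y}^{i,2} + T_{y_n,\bar y}^{i,3} = -\zeta_i(\bar u,\bar y;s_n,h_n) + R_n^i$ with $|R_n^i| \leq C\kappa_n^2\bigl(\1_{\Omega_{y_n,\bar y}^{i,2}} + \1_{\Omega_{y_n,\bar y}^{i,3}}\bigr)$; since $\kappa_n^2/s_n^2$ is bounded and $\Omega_{y_n,\bar y}^{i,2}\cup\Omega_{y_n,\bar y}^{i,3} \subset \{0 < |\bar y - t_i| \leq \kappa_n\}$ shrinks to a $|\nabla\bar y|\dx$-null set, $\frac{1}{s_n^2}\int_\Omega R_n^i\,\nabla\bar\varphi\cdot\nabla\bar y\dx \to 0$. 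Recognizing $\sum_{i\in I_{\bar y}}\zeta_i(\bar u,\bar y;s_n,h_n) = A_n(\bar u,\bar y;\{s_n\},\{h_n\})$ via \eqref{eq:A-n}, \cref{cor:sigma-term-bound} — which, using \cref{lem:invariant}, identifies $\liminf_n\frac{1}{s_n^2}\int_\Omega A_n\,\nabla\bar y\cdot\nabla\bar\varphi\dx$ with $\tilde Q(\bar u,\bar y,\bar\varphi;\{s_n\},h)$ even though $h_n\rightharpoonup h$ — gives $\limsup_n\bigl(-\frac{1}{s_n^2}\int_\Omega A_n\,\nabla\bar\varphi\cdot\nabla\bar y\dx\bigr) = -\tilde Q(\bar u,\bar y,\bar\varphi;\{s_n\},h)$. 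Adding the smooth limit, the vanishing remainder, and this $\limsup$, and using that a convergent sequence can be pulled out of a $\limsup$, yields the asserted identity for $\limsup_n\frac{1}{s_n^2}H_n$.

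The main obstacle is the bookkeeping in (iii): expanding the crossing terms $T_{y_n,\bar y}^{i,2}$, $T_{y_n,\bar y}^{i,3}$ precisely enough to see that the continuity relations at the $t_i$ annihilate exactly the contributions of first order in $\bar y - t_i$, so that the leading term is $\pm[a_{i-1}'(t_i) - a_i'(t_i)](t_i - y_n)$ and reconstructs $-\zeta_i$, and to certify that the remainder is genuinely $O(\kappa_n^2) = O(s_n^2)$ supported on sets that become $|\nabla\bar y|\dx$-negligible, so that it washes out after division by $s_n^2$. A secondary subtlety is that the conclusion is a $\limsup$ rather than a $\lim$ — in line with $\tilde Q$ being a $\liminf$ along $\{s_n\}$ — so one must confirm that the smooth contribution and the remainder possess genuine limits before extracting them from the $\limsup$.
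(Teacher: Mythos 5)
Your proposal is correct and follows essentially the same route as the paper's proof: (i) and (ii) via \cref{cor:diff} and dominated convergence, and (iii) via the decomposition of \cref{lem:T-decomposition}, the Taylor expansions about $t_i$, $t_{i+1}$ that isolate $-\zeta_i$ with an $O(\kappa_n^2)$ remainder on the shrinking sets $\Omega_{y_n,\bar y}^{i,2}\cup\Omega_{y_n,\bar y}^{i,3}$, and the identification of the $\liminf$ of the $A_n$-term with $\tilde Q(\bar u,\bar y,\bar\varphi;\{s_n\},h)$ through \cref{cor:sigma-term-bound}. The only (harmless) presentational differences are your self-contained case analysis for the pointwise difference quotient in (ii) in place of the citation to \cite{ClasonNhuRosch}, and the single integral-form Taylor remainder for $T_{y_n,\bar y}^{i,1}$ instead of the paper's three-way split into $g_n^{i,1},g_n^{i,2},g_n^{i,3}$.
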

\begin{proof}
    \emph{(i):} By \cref{cor:diff}, we have
    \begin{equation}
        \label{eq:deri-limit}
        \frac{y_n - \bar y}{s_n} \to S'(\bar u)h \quad \text{in } W^{1,p}_0(\Omega).
    \end{equation}
    This and the dominated convergence theorem give assertion (i).

    \emph{(ii):}  According to \cref{lem:adjoint-equation} and the fact that $G'(\bar y) \in L^{\bar p}(\Omega)$ and $\bar u \in L^{\infty}(\Omega)$, the adjoint state $\bar \varphi$ belongs to $W^{1,\infty}(\Omega)$. Moreover, from \eqref{eq:deri-limit} we have that
    \begin{equation*}
        \frac{1}{s_n} \nabla (y_n - \bar y) \to \nabla S'(\bar u)h \quad\text{in } L^p(\Omega)^N\qquad\text{and}\qquad \frac{1}{s_n} (y_n - \bar y) \to S'(\bar u)h \quad\text{in }C(\overline\Omega).  
    \end{equation*}
    Finally, for all $x\in \overline\Omega$, it holds that $\frac{1}{s_n}\left[a(y_n(x)) - a(\bar y(x)) \right] \to a'(\bar y(x); (S'(\bar u)h)(x))$; see, e.g., \cite[Lem.~3.5]{ClasonNhuRosch}. Therefore, we obtain (ii) from the dominated convergence theorem.

    \emph{(iii): } According to \cref{lem:Lip} and the continuous embedding $W^{1,p}_0(\Omega) \hookrightarrow C(\overline\Omega)$, we obtain $y_n \to \bar y$ in $C(\overline\Omega)$, and we can thus assume that $\norm{y_n - \bar y}_{C(\overline\Omega)} < \delta$ for all $n \in\N$ large enough. Since $\nabla \bar y$ vanishes almost everywhere on $\{\bar y \in E_{a} \}$ \cite[Rem.~2.6]{Chipot2009} and there exists a constant $M>0$ such that $\max\{|y_n(x)|, |\bar y(x)| \} \leq M$ for all $x \in \overline\Omega$, we can write
    \begin{equation}
        \label{eq:Hn-rewrittem}
        \begin{aligned}[t]
            H_n & =\int_\Omega \1_{\{\bar y \notin E_{a} \} } [ a(y_n) - a(\bar y) - a'(\bar y)(y_n - \bar y) ] \nabla \bar \varphi \cdot \nabla \bar y \dx  \\
            & = \int_\Omega T_{y_n, \bar y}  \nabla \bar \varphi \cdot \nabla \bar y \dx  
            = \sum_{i \in I_{\bar y}} \int_\Omega \left[T_{y_n, \bar y}^{i,1} + T_{y_n, \bar y}^{i,2} + T_{y_n, \bar y}^{i,3} \right]\nabla \bar \varphi \cdot \nabla \bar y \dx, 
        \end{aligned}
    \end{equation}
    where $T_{y_n, \bar y}$ and $T_{y_n,\bar y}^{i,j}$, $j = 1,2,3$, are defined in \eqref{eq:T-func} and  \cref{lem:T-decomposition}. 

    We now estimate $T_{y_n, \bar y}^{i,j}$ for $i \in I_{\bar y}$ and $ j = 1,2,3$. Let us fix $i \in I_{\bar y}$ and consider $T_{y_n, \bar y}^{i,1}$. We have
    \begin{equation}
        \label{eq:T-i1}
        T_{y_n, \bar y}^{i,1} = g^{i,1}_n + g^{i,2}_n + g^{i,3}_n
    \end{equation}
    with
    \begin{align*}
        & g^{i,1}_n := \1_{\Omega_{\bar y, i, i +1}^{[\delta, -\delta]} } \left[a_i(y_n) - a_i(\bar y) - a_i'(\bar y)(y_n-\bar y) \right],\\
        & g^{i,2}_n := \1_{\Omega_{\bar y, i, i }^{(0, \delta)} \cap \Omega_{y_n, i, i }^{(0, 2\delta)} } \left[a_i(y_n) - a_i(\bar y) - a_i'(\bar y)(y_n-\bar y) \right],\\
        & g^{i,3}_n := \1_{\Omega_{\bar y, i+1, i+1}^{(-\delta, 0)} \cap \Omega_{y_n, i+1, i+1 }^{(-2\delta, 0)} } \left[a_i(y_n) - a_i(\bar y) - a_i'(\bar y)(y_n-\bar y) \right].
    \end{align*}
    A standard argument shows that 
    \begin{equation}
        \label{eq:g1n-lim}
        \frac{1}{s_n^2} \int_\Omega g^{i,1}_n  \nabla \bar \varphi \cdot \nabla \bar y \dx \to \frac{1}{2} \int_\Omega \1_{\Omega_{\bar y, i, i +1}^{[\delta, -\delta]} } a_i''(\bar y)(S'(\bar u)h)^2 \nabla \bar \varphi \cdot \nabla \bar y \dx.
    \end{equation}
    Since $\norm{y_n - \bar y}_{C(\overline\Omega)} < \delta$ and $\norm{y_n - \bar y}_{C(\overline\Omega)} \to 0$ as $n\to\infty$, there holds
    \begin{align*}
        \1_{\Omega_{\bar y, i, i }^{(0, \delta)} \cap \Omega_{y_n, i, i }^{(0, 2\delta)} } - \1_{\Omega_{\bar y, i, i }^{(0, \delta)}} & = - \1_{\Omega_{\bar y, i, i }^{(0, \delta)} \cap \Omega_{y_n, i, i }^{(-\delta, 0]} } \to 0 \quad \text{as } n \to \infty
    \end{align*}
    almost everywhere in $\Omega$, which together with the dominated convergence theorem yields 
    \begin{equation}
        \label{eq:g2n-lim}
        \frac{1}{s_n^2} \int_\Omega g^{i,2}_n  \nabla \bar \varphi \cdot \nabla \bar y \dx \to \frac{1}{2} \int_\Omega \1_{\Omega_{\bar y, i, i}^{(0,\delta)} } a_i''(\bar y)(S'(\bar u)h)^2 \nabla \bar \varphi \cdot \nabla \bar y \dx.
    \end{equation}
    Similarly, it holds that
    \begin{equation}
        \label{eq:g3n-lim}
        \frac{1}{s_n^2} \int_\Omega g^{i,3}_n  \nabla \bar \varphi \cdot \nabla \bar y \dx \to \frac{1}{2} \int_\Omega \1_{\Omega_{\bar y, i+1, i+1}^{(-\delta,0)} } a_i''(\bar y)(S'(\bar u)h)^2 \nabla \bar \varphi \cdot \nabla \bar y \dx.
    \end{equation}
    From \eqref{eq:T-i1}--\eqref{eq:g3n-lim} and the fact that $\Omega_{\bar y, i, i +1}^{[\delta, -\delta]} \cup \Omega_{\bar y, i, i}^{(0,\delta)} \cup  \Omega_{\bar y, i+1, i+1}^{(-\delta,0)} = \Omega_{\bar y, i, i+1}^{(0,0)}  =: \{ \bar y \in (t_i, t_{i+1}) \}$, we deduce that
    \begin{equation*}
        \frac{1}{s_n^2} \int_\Omega T_{y_n, \bar y}^{i,1}  \nabla \bar \varphi \cdot \nabla \bar y \dx \to  \frac{1}{2} \int_\Omega \1_{ \{ \bar y \in (t_i, t_{i+1}) \} } a_i''(\bar y)(S'(\bar u)h)^2 \nabla \bar \varphi \cdot \nabla \bar y \dx,
    \end{equation*} 
    which, together with the fact that $I_{\bar y}$ is finite, implies that
    \begin{equation}
        \label{eq:Ti1-lim}
        \frac{1}{s_n^2} \sum_{i \in I_{\bar y}} \int_\Omega T_{y_n, \bar y}^{i,1}  \nabla \bar \varphi \cdot \nabla \bar y \dx \to  \frac{1}{2} \int_\Omega \1_{ \{ \bar y \notin E_{a} \} } a''(\bar y)(S'(\bar u)h)^2 \nabla \bar \varphi \cdot \nabla \bar y \dx.
    \end{equation}

    We now estimate $T_{y_n, \bar y}^{i,2}$. To this end, we write
    \begin{equation}   \label{eq:T-i2}
        \begin{aligned}[t]
            T_{y_n, \bar y}^{i,2}& = \1_{ \Omega_{y_n, \bar y }^{i,2} }\left[ a_{i-1}(y_n) - a_i(\bar y) - a_i'(\bar y)(y_n -\bar y) \right]   \\
            & = \left\{ \1_{ \Omega_{y_n, \bar y }^{i,2} }\left[ a_{i-1}(y_n) - a_{i-1}(t_i) - a_{i-1}'(t_i)(y_n -t_i) \right]\right. \\
            \MoveEqLeft[-2] \left.+ \1_{ \Omega_{y_n, \bar y }^{i,2} }\left[ a_{i}(t_i) - a_{i}(\bar y) - a_i'(\bar y)(t_i - \bar y) \right]  + \1_{ \Omega_{y_n, \bar y }^{i,2} }\left[ a_{i}'(t_i) - a_i'(\bar y) \right](y_n- t_i) \right\} \\
            \MoveEqLeft[-1] + \1_{ \Omega_{y_n, \bar y }^{i,2} }\left[ a_{i-1}'(t_i) - a_i'(t_i) \right](y_n- t_i) \\
            & =: \hat T_{y_n, \bar y}^{i,2} - \1_{ \Omega_{y_n, \bar y }^{i,2} } \left[ a_{i-1}'(t_i) - a_i'(t_i) \right] (t_i-y_n) 
        \end{aligned}
    \end{equation}
    with $\Omega_{y_n, \bar y }^{i,2} := \Omega_{\bar y, i, i }^{(0, \delta)} \cap \Omega_{y_n, i, i }^{(-\delta, 0]} = \{ \bar y \in (t_i, t_i + \delta), y_n \in (t_i - \delta, t_i] \}$.
    For almost every $x \in \Omega_{y_n, \bar y }^{i,2}$, we have 
    \begin{equation} \label{eq:Omega-i2-set}
        0 \leq \bar y(x) - t_i, t_i - y_n(x) \leq |y_n(x) - \bar y(x)| \leq \norm{y_n - \bar y}_{C(\overline\Omega)} < \delta.
    \end{equation}
    Combined with the fact that $|y_n(x)| \leq M$ for all $x \in \overline\Omega$ and that $a_{i-1}$ is of class $C^2$, we obtain 
    \begin{equation*}
        \begin{aligned}
            \frac{1}{s_n^2}\left|\1_{ \Omega_{y_n, \bar y }^{i,2} }(x)\left[ a_{i-1}(y_n(x)) - a_{i-1}(t_i) - a_{i-1}'(t_i)(y_n(x) -t_i)  \right] \right| & \leq  \frac{1}{2s_n^2}C \1_{ \Omega_{y_n, \bar y }^{i,2} }(x) \norm{y_n - \bar y}_{C(\overline\Omega)}^2 \\
            & \leq \frac{1}{2}C \1_{ \Omega_{y_n, \bar y }^{i,2} }(x) \to 0
        \end{aligned}
    \end{equation*} 
    for almost every $x \in \Omega$ and for some constant $C$. Here we employed the Lipschitz continuity of $S$ (see \cref{lem:Lip}) and the embedding $W^{1,p}_0(\Omega) \hookrightarrow C(\overline\Omega)$ to deduce the last inequality. It therefore holds that
    \begin{equation}
        \label{eq:T-i2-auxi1}
        \frac{1}{s_n^2}\int_\Omega \left|\1_{ \Omega_{y_n, \bar y }^{i,2} }\left[ a_{i-1}(y_n) - a_{i-1}(t_i) - a_{i-1}'(t_i)(y_n -t_i)  \right] \nabla \bar\varphi \cdot \nabla \bar y \right| \dx \to 0.
    \end{equation}
    Similarly, we obtain
    \begin{equation}
        \label{eq:T-i2-auxi2}
        \frac{1}{s_n^2}\int_\Omega \left|\1_{ \Omega_{y_n, \bar y }^{i,2} }\left[ a_{i}(t_i) - a_{i}(\bar y) - a_i'(\bar y)(t_i - \bar y)  \right] \nabla \bar\varphi \cdot \nabla \bar y \right| \dx \to 0
    \end{equation}
    and
    \begin{equation}
        \label{eq:T-i2-auxi3}
        \frac{1}{s_n^2}\int_\Omega \left|\1_{ \Omega_{y_n, \bar y }^{i,2} }\left[ a_{i}'(t_i) - a_i'(\bar y) \right](y_n- t_i) \nabla \bar\varphi \cdot \nabla \bar y \right| \dx \to 0.
    \end{equation}
    Combining \eqref{eq:T-i2-auxi1}--\eqref{eq:T-i2-auxi3} thus gives 
    \begin{equation}
        \label{eq:T-i2-zero-part}
        \frac{1}{s_n^2} \int_\Omega \left| \hat T_{y_n, \bar y}^{i,2} \nabla \bar\varphi \cdot \nabla \bar y \right| \dx \to 0.
    \end{equation}
    By the same argument as for \eqref{eq:T-i2} and \eqref{eq:T-i2-zero-part}, $T_{y_n, \bar y}^{i,3}$ can be written in the form
    \begin{equation} \label{eq:T-i3}
        T_{y_n, \bar y}^{i,3} = \hat T_{y_n, \bar y}^{i,3} - \1_{ \Omega_{y_n, \bar y }^{i,3} }\left[a_{i+1}'(t_{i+1}) - a_{i}'(t_{i+1})\right] (t_{i+1}-y_n)
    \end{equation}
    with $\Omega_{y_n, \bar y }^{i,3} = \{ \bar y \in (t_{i+1}- \delta, t_{i+1}), y_n \in [t_{i+1}, t_{i+1} + \delta) \}$, and $\hat T_{y_n, \bar y}^{i,3}$ satisfying
    \begin{equation}
        \label{eq:T-i3-zero-part}
        \frac{1}{s_n^2} \int_\Omega \left| \hat T_{y_n, \bar y}^{i,3} \nabla \bar\varphi \cdot \nabla \bar y \right| \dx \to 0.
    \end{equation}
    By combining \eqref{eq:Hn-rewrittem} with the limits \eqref{eq:Ti1-lim}, \eqref{eq:T-i2}, \eqref{eq:T-i2-zero-part}, \eqref{eq:T-i3}, \eqref{eq:T-i3-zero-part} and the definition \eqref{eq:A-n}, we can conclude that
    \begin{equation*}
        \limsup_{n \to \infty} \frac{1}{s_n^2} H_n = \frac{1}{2} \int_\Omega \1_{\{\bar y\notin E_{a} \}}a''(\bar y)(S'(\bar u)h)^2 \nabla \bar \varphi \cdot \nabla \bar y \dx 
        - \liminf_{n \to \infty} \frac{1}{s_n^2} \int_\Omega A_n(\{s_n\},\{h_n\}) \nabla \bar y \cdot \nabla \bar\varphi \dx.
    \end{equation*}
    Together with \cref{cor:sigma-term-bound}, this gives (iii).
\end{proof}

\subsection{Second-order conditions}\label{sec:2nd-OC:soc}

We now have everything at hand to prove the following two theorems that are the main results of the paper, providing no-gap second-order necessary and sufficient conditions in terms of the curvature functionals $Q_s$, $Q_1$, and $Q_2$ defined in \cref{sec:2nd-OC:curvature}.

\begin{theorem}[second-order necessary optimality condition] 
    \label{thm:2nd-OS-nec}
    Let \crefrange{ass:domain}{ass:cost_func} hold. Assume that $\bar u$ is a local optimal solution of \eqref{eq:P2} such that $G'(\bar y) \in L^{\bar p}(\Omega)$ and $\Sigma(\bar y) <\infty$ for some $\bar p >N$ and $\bar y:=S(\bar u)$. Then there exists an adjoint state $\bar\varphi \in H^1_0(\Omega) \cap W^{1,\infty}(\Omega)$ that together with $\bar u, \bar y$ satisfies \eqref{eq:1st-OS} as well as
    \begin{equation}
        \label{eq:2nd-OS-nec}
        Q_s(\bar u, \bar y, \bar \varphi;h,h) + Q_1(\bar u, \bar y, \bar \varphi;h,h) + Q_2(\bar u,\bar y,\bar\varphi;h) \geq 0 \qquad\text{for all }h\in \mathcal{C}(\mathcal{U}_{ad};\bar u).
    \end{equation}
\end{theorem}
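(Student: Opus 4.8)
The plan is to obtain the adjoint state from the first-order system, improve its regularity, and then establish \eqref{eq:2nd-OS-nec} by inserting suitable perturbations into the Taylor-type expansion of \cref{lem:2nd-expression}, controlling the resulting remainder via \cref{lem:limits} and finally passing to an infimum over null sequences to recover the curvature term $Q_2$.

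First I would fix the adjoint state: since $\bar u$ is a local minimizer, \cref{thm:1st-OC} provides $\bar\varphi\in H^1_0(\Omega)$ satisfying \eqref{eq:adjoint_OS} and the variational inequality \eqref{eq:normal_OS}; as $\bar u\in L^\infty(\Omega)$ lies in a bounded subset of $L^{\bar p}(\Omega)$ and $G'(\bar y)\in L^{\bar p}(\Omega)$ with $\bar p>N$, \cref{lem:adjoint-equation} upgrades this to $\bar\varphi\in H^1_0(\Omega)\cap W^{1,\infty}(\Omega)$, so in particular $\nabla\bar\varphi\in L^\infty(\Omega)$ and, together with the hypothesis $\Sigma(\bar y)<\infty$, the assumptions of \cref{lem:limits,cor:sigma-term-bound} are met. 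Next, fix $h\in\mathcal C(\mathcal U_{ad};\bar u)$. Because $\mathcal U_{ad}$ is polyhedric at $\bar u$ (viewed in $L^2(\Omega)$) and $-\bar d\in\mathcal N(\mathcal U_{ad};\bar u)$ by \eqref{eq:normal_OS}, we have $\mathcal C(\mathcal U_{ad};\bar u)=\mathrm{cl}_2[\mathcal R(\mathcal U_{ad};\bar u)\cap(\bar d^\bot)]$, hence there are $h_k\in\mathcal R(\mathcal U_{ad};\bar u)\cap(\bar d^\bot)$ with $h_k\to h$ in $L^2(\Omega)$ and radii $\rho_k>0$ with $\bar u+sh_k\in\mathcal U_{ad}$ for $s\in[0,\rho_k]$. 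The essential point is the order of quantifiers: I would take an \emph{arbitrary} $\{s_n\}\in c_0^+$ and then, since $s_n\to0^+$, diagonalize to indices $k(n)\to\infty$ with $s_n<\rho_{k(n)}$ for large $n$, setting $h_n:=h_{k(n)}$; this yields $h_n\to h$ strongly in $L^2(\Omega)$, $\bar u+s_nh_n\in\mathcal U_{ad}$ for large $n$, and $\int_\Omega\bar d\,h_n\dx=0$.

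With these perturbations in hand, $\bar u+s_nh_n\to\bar u$ in $L^2(\Omega)$ and local optimality give $j(\bar u+s_nh_n)\ge j(\bar u)$ for $n$ large. Plugging $u=\bar u+s_nh_n$, $y_n:=S(\bar u+s_nh_n)$ into \eqref{eq:2nd-Taylor-expansion}, the term $\int_\Omega\bar d(u-\bar u)\dx=s_n\int_\Omega\bar d\,h_n\dx$ vanishes; dividing by $s_n^2$, rearranging so that the remainder $H_n$ of \cref{lem:limits}\,(iii) is isolated, and taking $\limsup_{n\to\infty}$, I would invoke \cref{lem:limits}\,(i)--(ii) to evaluate the $G''$- and $a$-integral limits, use $\norm{h_n}_{L^2(\Omega)}^2\to\norm{h}_{L^2(\Omega)}^2$ (here strong convergence of $h_n$ is used), and use \cref{lem:limits}\,(iii) for $H_n$; regrouping and matching with the definitions of $Q_s$ and $Q_1$ then gives
\begin{equation*}
    0\le Q_s(\bar u,\bar y,\bar\varphi;h,h)+Q_1(\bar u,\bar y,\bar\varphi;h,h)+\tilde Q(\bar u,\bar y,\bar\varphi;\{s_n\},h).
\end{equation*}
Since $\{s_n\}\in c_0^+$ was arbitrary, taking the infimum over $c_0^+$ and using the definition \eqref{eq:key-term} of $Q_2$ produces \eqref{eq:2nd-OS-nec}.

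The step I expect to be the main obstacle is exactly this reduction to $Q_2$. One cannot prove the inequality for the radial directions $h_k$ and then pass to the $L^2$-limit $h$, because $Q_2$ is an infimum over all null sequences of $\liminf$-type quantities and \cref{prop:sigma-wlsc} only yields its weak lower semicontinuity, which is the wrong direction here; fixing $\{s_n\}$ first and diagonalizing the $h_k$ afterwards circumvents this. Two further technical points must be respected: the approximation $h_n\to h$ has to be \emph{strong} in $L^2(\Omega)$ so that $\tfrac\nu2\norm{h_n}_{L^2(\Omega)}^2$ passes to the limit, and polyhedricity is used precisely to keep each $h_n$ in $\bar d^\bot$, which removes the first-order contribution in \eqref{eq:2nd-Taylor-expansion} without any sign assumption on $\bar d$; a minor point is that ensuring $j(\bar u+s_nh_n)\ge j(\bar u)$ relies on interpreting local optimality of \eqref{eq:P2} in the $L^2(\Omega)$-sense, consistent with the $L^2$-based treatment of $\mathcal U_{ad}$ here.
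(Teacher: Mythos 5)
Your proposal is correct and follows essentially the same route as the paper: adjoint regularity via \cref{lem:adjoint-equation}, polyhedricity of $\mathcal{U}_{ad}$ to approximate $h$ by radial directions in $\bar d^\bot$, the Taylor-type expansion of \cref{lem:2nd-expression} combined with \cref{lem:limits}, and fixing an arbitrary $\{s_n\}\in c_0^+$ \emph{before} choosing the perturbation directions so that the conclusion holds for $\tilde Q(\bar u,\bar y,\bar\varphi;\{s_n\},h)$ and the infimum over $c_0^+$ yields $Q_2$. Your diagonalization (reindexing the radial directions as $h_{k(n)}$ along the full sequence $\{s_n\}$) is a slightly cleaner variant of the paper's argument, which instead passes to a $\liminf$-attaining subsequence $\{s_{n_k}\}$ and a further subsequence $\{r_m\}$ with $r_m\le\lambda_m$ and then identifies $\tilde Q(\cdot;\{r_m\},h)=\tilde Q(\cdot;\{s_n\},h)$; both are valid.
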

\begin{proof}
    The existence of a $\bar\varphi \in H^1_0(\Omega)$ satisfying \eqref{eq:1st-OS} follows from \cref{thm:1st-OC}, while the claimed regularity of $\bar\varphi$ follows from \cref{lem:adjoint-equation}. It remains to prove \eqref{eq:2nd-OS-nec}. To this end, let $h \in \mathcal{C}(\mathcal{U}_{ad};\bar u)$ and $\{s_n\} \in c_0^+$ be arbitrary but fixed. We only need to show that
    \begin{equation}
        \label{eq:2nd-OS-nec-2}
        Q_s(\bar u, \bar y, \bar \varphi;h,h)+Q_1(\bar u, \bar y, \bar \varphi;h,h) + \tilde{Q}(\bar u,\bar y,\bar\varphi;\{s_n\},h) \geq 0.
    \end{equation}
    In order to verify \eqref{eq:2nd-OS-nec-2}, we first see from the definition of $\tilde{Q}(\bar u,\bar y,\bar \varphi; \{s_n\},h)$ that there exists a subsequence $\{s_{n_k}\}$ satisfying
    \begin{equation}
        \label{eq:sigma-subsequence}
        \tilde{Q}(\bar u,\bar y,\bar\varphi;\{s_n\},h) =  \lim_{k \to \infty} \frac{1}{s_{n_k}^2} \int_\Omega  A_{n_k} (\{s_{n_k}\}, \{h\}) \nabla \bar y \cdot \nabla \bar \varphi \dx.
    \end{equation}
    Since $\mathcal{U}_{ad}$ is polyhedric, there are sequences $\{h_m\} \subset L^2(\Omega)$, $\{q_m\} \subset \mathcal{U}_{ad}$, and $\{\lambda_m \} \subset (0,\infty)$ such that
    \begin{equation*}
        h_m \to h \quad \text{in } L^2(\Omega), \quad h_m = \frac{q_m - \bar u}{\lambda_m}, \quad \text{and} \quad h_m \in \bar d^\bot \quad \text{for all } m \in\N.
    \end{equation*}
    Since $s_{n_k} \to 0^+$ as $k \to \infty$, a subsequence, denoted by $\{r_m\}$, of $\{s_{n_k}\}$ exists such that $ 0 < r_m \leq \lambda_m$ for all $m \in\N$. This and the convexity of $\mathcal{U}_{ad}$ yield that
    \begin{equation*}
        \bar u + r_m h_m = \left(1 - \frac{r_m}{\lambda_m}  \right)\bar u + \frac{r_m}{\lambda_m} q_m \in \mathcal{U}_{ad} \quad \text{for all } m \in\N.
    \end{equation*}
    Since $\bar u$ is a local minimizer of \eqref{eq:P2}, it holds that 
    \begin{equation*}
        \frac{1}{r_m^2}\left( j(\bar u + r_m h_m) - j(\bar u) \right) \geq 0
    \end{equation*}
    for all $m\in\N$ large enough. Taking the limit inferior and employing the fact that $h_m \in \bar d^\bot$ and $h_m \to h$ in $L^2(\Omega)$, we obtain from \cref{lem:2nd-expression,lem:limits} that
    \begin{equation*}
        Q_s(\bar u, \bar y, \bar \varphi;h,h)+Q_1(\bar u, \bar y, \bar \varphi;h,h)  + \tilde{Q}(\bar u,\bar y,\bar \varphi; \{r_m\},h) \geq 0.
    \end{equation*}
    Moreover, we have $\tilde{Q}(\bar u,\bar y,\bar \varphi; \{r_m\},h) = \tilde{Q}(\bar u,\bar y,\bar \varphi; \{s_n\},h)$ as a result of \eqref{eq:sigma-subsequence} and the fact that $\{r_m\}$ is a subsequence of $\{s_{n_k}\}$.
    This finally gives \eqref{eq:2nd-OS-nec-2}.
\end{proof}

\begin{theorem}[second-order sufficient optimality conditions] 
    \label{thm:2nd-OS-suf}
    Let \crefrange{ass:domain}{ass:cost_func} hold. Assume that $\bar u$ is a feasible point of \eqref{eq:P2} such that $G'(\bar y) \in L^{\bar p}(\Omega)$ and $\Sigma(\bar y) < \infty$ for some $\bar p >N$ and $\bar y:=S(\bar u)$. Assume further that  there exists an adjoint state $\bar\varphi \in H^1_0(\Omega) \cap W^{1,\infty}(\Omega)$ that together with $\bar u, \bar y$ satisfies the first-order optimality conditions \eqref{eq:1st-OS} as well as
    \begin{equation}
        \label{eq:2nd-OS-suff}
        Q_s(\bar u, \bar y, \bar \varphi;h,h) +Q_1(\bar u, \bar y, \bar \varphi;h,h) +  Q_2(\bar u,\bar y,\bar\varphi;h) >0
        \qquad\text{for all }h\in \mathcal{C}(\mathcal{U}_{ad};\bar u)\setminus \{0\}.
    \end{equation}
    Then there exist constants $c, \rho >0$ such that
    \begin{equation}
        \label{eq:quadratic-grownth}
        j(\bar u) + c \norm{u - \bar u}_{L^2(\Omega)}^2 \leq j(u) \quad \text{for all } u \in \mathcal{U}_{ad} \cap \overline B_{L^2(\Omega)}(\bar u, \rho).
    \end{equation}
    In particular, $\bar u$ is a strict local minimizer of \eqref{eq:P2}.
\end{theorem}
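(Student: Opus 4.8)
The plan is to establish the quadratic growth estimate \eqref{eq:quadratic-grownth} by contradiction; the strict local optimality of $\bar u$ is then immediate. So suppose \eqref{eq:quadratic-grownth} fails for all $c,\rho>0$. Then there is a sequence $\{u_n\}\subset\mathcal{U}_{ad}$ with $u_n\neq\bar u$, $u_n\to\bar u$ in $L^2(\Omega)$, and
\begin{equation*}
    j(u_n)<j(\bar u)+\tfrac1n\norm{u_n-\bar u}_{L^2(\Omega)}^2 .
\end{equation*}
Set $s_n:=\norm{u_n-\bar u}_{L^2(\Omega)}\to0^+$ and $h_n:=(u_n-\bar u)/s_n$, so $\norm{h_n}_{L^2(\Omega)}=1$; after passing to a subsequence (not relabeled) we may assume $h_n\rightharpoonup h$ in $L^2(\Omega)$ with $\norm{h}_{L^2(\Omega)}\le1$. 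Since $u_n\in\mathcal{U}_{ad}$, the pointwise characterisation of $\mathcal{T}(\mathcal{U}_{ad};\bar u)$ gives $h_n\in\mathcal{T}(\mathcal{U}_{ad};\bar u)$, and as this cone is convex and closed, hence weakly closed, also $h\in\mathcal{T}(\mathcal{U}_{ad};\bar u)$. Moreover, $j$ is of class $C^1$ by \cref{thm:objective-deri} and $j'(\bar u)(u_n-\bar u)=\int_\Omega\bar d(u_n-\bar u)\dx\ge0$ by \eqref{eq:normal_OS}, so $0\le j'(\bar u)(u_n-\bar u)=j(u_n)-j(\bar u)+o(s_n)<\tfrac1n s_n^2+o(s_n)=o(s_n)$; hence $\int_\Omega\bar d\,h_n\dx=\tfrac1{s_n}j'(\bar u)(u_n-\bar u)\to0$, and therefore $\int_\Omega\bar d\,h\dx=0$. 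Thus $h\in\mathcal{C}(\mathcal{U}_{ad};\bar u)$.

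Next I would insert $u=u_n$ (with $y_{u_n}=S(\bar u+s_nh_n)$) into the Taylor-type identity \eqref{eq:2nd-Taylor-expansion}, divide by $s_n^2$, and pass to the limit inferior; by the contradiction hypothesis, $\liminf_{n}\tfrac1{s_n^2}\bigl(j(u_n)-j(\bar u)\bigr)\le0$. On the right-hand side of \eqref{eq:2nd-Taylor-expansion}, \cref{lem:limits}\,(i) treats the $G''$-term (limit $\tfrac12G''(\bar y)(S'(\bar u)h)^2$), \cref{lem:limits}\,(ii) treats the mixed gradient term (limit $-\int_\Omega a'(\bar y;S'(\bar u)h)\nabla\bar\varphi\cdot\nabla(S'(\bar u)h)\dx=Q_1(\bar u,\bar y,\bar\varphi;h,h)$), and \cref{lem:limits}\,(iii)---which is where the hypotheses $\Sigma(\bar y)<\infty$ and $\bar\varphi\in W^{1,\infty}(\Omega)$ enter, via \cref{cor:sigma-term-bound}---yields $\limsup_n\tfrac1{s_n^2}H_n=\tfrac12\int_\Omega\1_{\{\bar y\notin E_a\}}a''(\bar y)(S'(\bar u)h)^2\nabla\bar\varphi\cdot\nabla\bar y\dx-\tilde Q(\bar u,\bar y,\bar\varphi;\{s_n\},h)$. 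Furthermore $\tfrac\nu2\norm{u_n-\bar u}_{L^2(\Omega)}^2/s_n^2\equiv\tfrac\nu2=\tfrac\nu2\norm{h}_{L^2(\Omega)}^2+\tfrac\nu2\bigl(1-\norm{h}_{L^2(\Omega)}^2\bigr)$ with nonnegative last summand (since $\norm{h_n-h}_{L^2(\Omega)}^2\to1-\norm{h}_{L^2(\Omega)}^2$), and $\tfrac1{s_n^2}\int_\Omega\bar d(u_n-\bar u)\dx\ge0$. Collecting these contributions, using the definition of $Q_s$ and the inequality $\tilde Q(\bar u,\bar y,\bar\varphi;\{s_n\},h)\ge Q_2(\bar u,\bar y,\bar\varphi;h)$ coming from the infimum in \eqref{eq:key-term}, I arrive at
\begin{equation*}
    0\ \ge\ Q_s(\bar u,\bar y,\bar\varphi;h,h)+Q_1(\bar u,\bar y,\bar\varphi;h,h)+Q_2(\bar u,\bar y,\bar\varphi;h)+\tfrac\nu2\bigl(1-\norm{h}_{L^2(\Omega)}^2\bigr).
\end{equation*}

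Finally I would distinguish two cases. If $h\neq0$, then $h\in\mathcal{C}(\mathcal{U}_{ad};\bar u)\setminus\{0\}$ and \eqref{eq:2nd-OS-suff} (with $Q_2(\bar u,\bar y,\bar\varphi;h)$ finite by \cref{prop:sigma-term}) makes the right-hand side above strictly positive---a contradiction. If $h=0$, then $S'(\bar u)h=0$ gives $Q_s(\bar u,\bar y,\bar\varphi;0,0)=Q_1(\bar u,\bar y,\bar\varphi;0,0)=0$, while $Q_2(\bar u,\bar y,\bar\varphi;0)=0$ because the sets $\Omega_{\bar y,\bar y}^{i,2}$ and $\Omega_{\bar y,\bar y}^{i,3}$ are empty, whence $\zeta_i(\bar u,\bar y;s,0)\equiv0$; the inequality then reads $0\ge\tfrac\nu2>0$, again a contradiction. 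Hence \eqref{eq:quadratic-grownth} holds for suitable $c,\rho>0$, and in particular $\bar u$ is a strict local minimizer of \eqref{eq:P2}.

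I expect the main obstacle to be the passage to the limit in the nonsmooth second-order term $H_n$: one has to show (this is \cref{lem:limits}\,(iii)) that its $\limsup$ splits into a smooth $a''$-part plus precisely the liminf-type quantity $\tilde Q(\bar u,\bar y,\bar\varphi;\{s_n\},h)$ attached to the \emph{constant} direction $h$---which itself relies on \cref{lem:invariant,cor:sigma-term-bound} and on $\Sigma(\bar y)<\infty$---and then that the infimum over null sequences defining $Q_2$ in \eqref{eq:key-term} makes the chain of estimates point in the right direction (namely $\tilde Q\ge Q_2$, as required after taking a limit inferior). A secondary but essential point is the case $h=0$: there all three curvature functionals vanish and the necessary strict positivity is furnished solely by the Tikhonov term, which is exactly why the assumption $\nu>0$ cannot be omitted.
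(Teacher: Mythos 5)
Your proposal is correct and follows essentially the same route as the paper's proof: contradiction via a minimizing sequence, normalization $h_n=(u_n-\bar u)/s_n$ with weak limit $h$ shown to lie in $\mathcal{C}(\mathcal{U}_{ad};\bar u)$, passage to the limit in the Taylor-type expansion of \cref{lem:2nd-expression} using \cref{lem:limits} and \cref{cor:sigma-term-bound}, and the final contradiction from the Tikhonov term once $h=0$ is forced. The only (immaterial) differences are cosmetic: your verification of $h\in\bar d^\bot$ computes $\int_\Omega \bar d\,h_n\dx\to0$ directly rather than sandwiching $j'(\bar u)h$ between $0$ and $0$, and you make the case split $h\neq0$ versus $h=0$ explicit where the paper concludes $h=0$ in one step.
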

\begin{proof}
    We argue by contradiction. Assume that there exists a sequence $\{u_n\} \subset \mathcal{U}_{ad}$ such that
    \begin{equation} \label{eq:contradiction}
        \norm{u_n - \bar u}_{L^2(\Omega)} < \frac{1}{n} \quad \text{and} \quad j(\bar u) + \frac{1}{n}\norm{u_n - \bar u}_{L^2(\Omega)}^2 > j(u_n),\quad n\in\N.
    \end{equation}
    Setting $s_n := \norm{u_n - \bar u}_{L^2(\Omega)}$ and $h_n := \frac{u_n - \bar u}{s_n}$ yields that $\norm{h_n}_{L^2(\Omega)} = 1$. Then there exists a subsequence of $\{h_n\}$, also denoted in the same way, such that $h_n \rightharpoonup h$ in $L^2(\Omega)$ for some $h \in L^2(\Omega)$. 

    We first verify that $h \in \mathcal{C}(\mathcal{U}_{ad};\bar u)$. First, we have that $h_n \in \mathcal{R}(\mathcal{U}_{ad}; \bar u)$ and thus $h_n(x) \geq 0$ if $\bar u(x) = \alpha(x)$ and $h_n(x) \leq 0$ if $\bar u(x) = \beta(x)$ for almost every $x \in \Omega$. From this and $h_n \weakto h$, we deduce that  $h(x) \geq 0$ if $\bar u(x) = \alpha(x)$ and $h(x) \leq 0$ if $\bar u(x) = \beta(x)$ for almost every $x \in \Omega$. Consequently, it holds that $h \in \mathcal{T}(\mathcal{U}_{ad};\bar u)$. 
    Since $j$ is continuously differentiable as a function from $L^2(\Omega)$ to $\R$ according to \cref{thm:objective-deri},
    a Taylor expansion thus gives
    \begin{equation*}
        j(u_n) = j(\bar u) + j'(\bar u)(u_n - \bar u) + o(\norm{u_n - \bar u}_{L^2(\Omega)}).
    \end{equation*}
    This, together with the last inequality in \eqref{eq:contradiction}, implies, for $n$ large enough, that
    \begin{equation*}
        j'(\bar u)(u_n - \bar u) + o(\norm{u_n - \bar u}_{L^2(\Omega)}) < \frac{1}{n} s_n^2.
    \end{equation*}
    Dividing the above inequality by $s_n$ and then passing to the limit, we have $j'(\bar u)h \leq 0$. Furthermore, it follows from \eqref{eq:normal_OS} that $j'(\bar u)v \geq 0$ for all $v \in \mathcal{U}_{ad} - \bar u$ and thus for all $v \in \mathcal{T}(\mathcal{U}_{ad}; \bar u)$. In particular, we have $j'(\bar u)h \geq 0$ and thus $j'(\bar u)h = 0$. Hence, it holds that $h \in \bar d^\bot$ and so $h \in \mathcal{C}(\mathcal{U}_{ad};\bar u)$.

    We now obtain a contradiction and thus complete the proof. Indeed, from the last inequality in \eqref{eq:contradiction}, we obtain
    \begin{equation*}
        \frac{1}{s_n^2} \left[j(u_n) - j(\bar u) \right] < \frac{1}{n}.
    \end{equation*}
    Combining this with \eqref{eq:normal_OS}, \eqref{lem:2nd-expression}, and \cref{lem:2nd-expression} yields that 
    \begin{multline*}
        \frac{1}{n} > \frac{1}{s_n^2} \left \{ \int_0^1 (1-s)G''(\bar y + s(y_n -\bar y))(y_n - \bar y)^2 \ds + \frac{\nu}{2} s_n^2 \norm{h_n}_{L^2(\Omega)}^2 \right. \\
        \begin{aligned}
            & - \int_\Omega \left[a(y_n) - a(\bar y) \right] \nabla \bar \varphi \cdot \left( \nabla y_n - \nabla \bar y \right)\dx\\
            & \left. - \int_\Omega \left[ a(y_n) - a(\bar y) - \1_{\{\bar y \notin E_{a} \} }a'(\bar y)(y_n - \bar y) \right] \nabla \bar \varphi \cdot \nabla \bar y \dx\right \}  
        \end{aligned} 
    \end{multline*}
    with $y_n := S(u_n)$.
    Taking the limit inferior, employing \cref{lem:limits}, and using that $h_n\weakto h$ in $L^2(\Omega)$, we arrive at
    \begin{equation*} 
        0 \geq Q_s(\bar u, \bar y, \bar\varphi;h,h)+Q_1(\bar u, \bar y, \bar\varphi;h,h) + \tilde{Q}(\bar u,\bar y,\bar \varphi; \{s_n\},h) + \frac{\nu}{2}(1 -\norm{h}_{L^2(\Omega)}^2).
    \end{equation*}
    Consequently, 
    \begin{equation}  \label{eq:contradiction2}
        0 \geq Q_s(\bar u, \bar y, \bar\varphi;h,h)+Q_1(\bar u, \bar y, \bar\varphi;h,h) + Q_2(\bar u,\bar y,\bar\varphi;h) + \frac{\nu}{2}(1 -\norm{h}_{L^2(\Omega)}^2).
    \end{equation}
    Since the norm in $L^2(\Omega)$ is weakly lower semicontinuous, there holds $\norm{h}_{L^2(\Omega)} \leq 1$ by definition of $h_n$. 
    From this, \eqref{eq:contradiction2}, and \eqref{eq:2nd-OS-suff}, we have $h = 0$. Inserting $h=0$ into \eqref{eq:contradiction2} and exploiting the fact that $Q_2(\bar u,\bar y,\bar \varphi; 0) = 0$ yields that $0 \geq \frac{\nu}{2} > 0$, which is impossible.
\end{proof}

We finish this section by providing another version of the sufficient second-order optimality conditions that are equivalent to \eqref{eq:2nd-OS-suff} and could be useful for estimating discretization errors in finite element approximations. The proof of the next result is partly based on \cite[Thm.~4.4]{CasasMateos2002} with some modifications.
\begin{proposition}
    \label{thm:2nd-OS-suf2}
    Let \crefrange{ass:domain}{ass:cost_func} hold. Assume that $\bar u$ is a feasible point of \eqref{eq:P2}  such that $G'(\bar y) \in L^{\bar p}(\Omega)$ and $\Sigma(\bar y) < \infty$ for some $\bar p >N$ and $\bar y:=S(\bar u)$. Assume further that  there exists an adjoint state $\bar\varphi \in H^1_0(\Omega) \cap W^{1,\infty}(\Omega)$ that together with $\bar u, \bar y$ satisfies \eqref{eq:1st-OS}. Then, \eqref{eq:2nd-OS-suff} holds if and only if there exist constants $c_0, \tau > 0$ such that
    \begin{equation}
        \label{eq:2nd-OS-suff-equiv}
        Q_s(\bar u, \bar y, \bar \varphi;h,h) +Q_1(\bar u, \bar y, \bar \varphi;h,h) + Q_2(\bar u,\bar y,\bar\varphi;h) \geq  c_0 \norm{h}_{L^2(\Omega)}^2 \qquad\text{for all } h \in \mathcal{C}^\tau(\mathcal{U}_{ad};\bar u)
    \end{equation}
    for
    \begin{equation*}
        \mathcal{C}^\tau(\mathcal{U}_{ad};\bar u) := \left\{ h \in L^2(\Omega) \,\middle|\, h(x) 
            \begin{cases}
                \geq 0 & \text{if }  \bar u(x) = \alpha(x)\\
                \leq 0 & \text{if }  \bar u(x) = \beta(x) \\
                = 0 & \text{if }  |\bar d(x)| > \tau 
            \end{cases} 
            \text{a.e. } x \in \Omega
        \right\}.
    \end{equation*}
\end{proposition}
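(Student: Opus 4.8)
The plan is to prove the two implications of the stated equivalence separately. The direction \eqref{eq:2nd-OS-suff-equiv} $\Rightarrow$ \eqref{eq:2nd-OS-suff} is immediate: from the pointwise characterization \eqref{eq:critical-cone2} one sees that $\mathcal{C}(\mathcal{U}_{ad};\bar u) \subset \mathcal{C}^\tau(\mathcal{U}_{ad};\bar u)$ for every $\tau > 0$, since the requirement $h(x)=0$ on $\{\bar d \ne 0\}$ is stronger than the requirement $h(x)=0$ on $\{|\bar d| > \tau\}$; hence for $h \in \mathcal{C}(\mathcal{U}_{ad};\bar u)\setminus\{0\}$ the bound \eqref{eq:2nd-OS-suff-equiv} yields $Q_s(\bar u,\bar y,\bar\varphi;h,h) + Q_1(\bar u,\bar y,\bar\varphi;h,h) + Q_2(\bar u,\bar y,\bar\varphi;h) \ge c_0\norm{h}_{L^2(\Omega)}^2 > 0$.

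For the converse I would argue by contradiction, following the scheme of the proof of \cref{thm:2nd-OS-suf} and of \cite[Thm.~4.4]{CasasMateos2002}. If \eqref{eq:2nd-OS-suff-equiv} fails for every pair of constants, then taking $c_0 = \tau = 1/n$ produces $h_n \in \mathcal{C}^{1/n}(\mathcal{U}_{ad};\bar u)$, necessarily with $h_n \ne 0$, such that $Q_s(\bar u,\bar y,\bar\varphi;h_n,h_n) + Q_1(\bar u,\bar y,\bar\varphi;h_n,h_n) + Q_2(\bar u,\bar y,\bar\varphi;h_n) < \tfrac1n \norm{h_n}_{L^2(\Omega)}^2$. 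Since $Q_2(\bar u,\bar y,\bar\varphi;\cdot)$ is finite on $L^2(\Omega)$ by \cref{prop:sigma-term} and hence positively homogeneous of degree two by \cref{lem:homogeneity}, and $Q_s(\cdot;h,h)$, $Q_1(\cdot;h,h)$ are likewise, and since $\mathcal{C}^{1/n}(\mathcal{U}_{ad};\bar u)$ is a cone, I may rescale so that $\norm{h_n}_{L^2(\Omega)} = 1$, and then extract a subsequence with $h_n \weakto h$ in $L^2(\Omega)$.

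The first step is to verify that $h \in \mathcal{C}(\mathcal{U}_{ad};\bar u)$. The sign conditions defining $\mathcal{T}(\mathcal{U}_{ad};\bar u)$ hold for every $h_n$ (since $\mathcal{C}^{1/n}(\mathcal{U}_{ad};\bar u) \subset \mathcal{T}(\mathcal{U}_{ad};\bar u)$) and are preserved in the weak limit, so $h \in \mathcal{T}(\mathcal{U}_{ad};\bar u)$, whence $\int_\Omega \bar d\, h \dx \ge 0$ by \eqref{eq:normal_OS}. For the reverse inequality, I use that $h_n$ vanishes on $\{|\bar d| > 1/n\}$, so that $\left| \int_\Omega \bar d\, h_n \dx \right| = \left| \int_{\{|\bar d| \le 1/n\}} \bar d\, h_n \dx \right| \le \tfrac1n \norm{h_n}_{L^1(\Omega)} \le \tfrac1n |\Omega|^{1/2} \to 0$; since $\bar d = \bar\varphi + \nu \bar u \in L^2(\Omega)$ and $h_n \weakto h$, this forces $\int_\Omega \bar d\, h \dx = 0$, i.e., $h \in \bar d^\bot$, and hence $h \in \mathcal{C}(\mathcal{U}_{ad};\bar u)$.

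The second step is the passage to the limit, which I expect to be the main obstacle. Because $L^2(\Omega) \Subset W^{-1,p}(\Omega)$ for $p \in (N,6)$, \cref{cor:diff} gives $S'(\bar u)h_n \to S'(\bar u)h$ in $W^{1,p}_0(\Omega) \hookrightarrow C(\overline\Omega)$; this yields convergence of all terms of $Q_s(\cdot;h_n,h_n)$ except $\tfrac\nu2\norm{h_n}_{L^2(\Omega)}^2$ (weakly lower semicontinuous), convergence of $Q_1(\cdot;h_n,h_n)$ (using that $a'(\bar y(x);\cdot)$ is Lipschitz uniformly in $x$, that $\nabla S'(\bar u)h_n \to \nabla S'(\bar u)h$ in $L^p(\Omega)^N$, and that $\nabla\bar\varphi \in L^\infty(\Omega)^N$), and, by \cref{prop:sigma-wlsc}, $\liminf_n Q_2(\cdot;h_n) \ge Q_2(\cdot;h)$. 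Taking the inferior limit and using $\tfrac1n \to 0$ then gives $Q_s(\bar u,\bar y,\bar\varphi;h,h) + Q_1(\bar u,\bar y,\bar\varphi;h,h) + Q_2(\bar u,\bar y,\bar\varphi;h) \le 0$, so \eqref{eq:2nd-OS-suff} forces $h = 0$. But then $S'(\bar u)h_n \to 0$ in $C(\overline\Omega)$, so the $Q_s$- and $Q_1$-contributions tend to $0$ and, by \cref{prop:sigma-term}, $|Q_2(\cdot;h_n)| \le \Sigma(\bar y)\norm{\nabla\bar\varphi}_{L^\infty(\Omega)}\norm{S'(\bar u)h_n}_{L^\infty(\Omega)}^2 \to 0$; the displayed inequality reduces to $\tfrac\nu2\norm{h_n}_{L^2(\Omega)}^2 + o(1) < \tfrac1n$, i.e. $\tfrac\nu2 + o(1) < \tfrac1n$, which is impossible as $n \to \infty$. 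The delicate point is combining the full convergence of the smooth and first-order non-smooth contributions with the mere weak lower semicontinuity of $Q_2$ and of the $\nu$-term, and then, once $h=0$ is reached, converting this into the quantitative decay of $Q_2(\cdot;h_n)$ — precisely where the standing hypotheses $\Sigma(\bar y)<\infty$ and $\bar\varphi\in W^{1,\infty}(\Omega)$ enter (through \cref{prop:sigma-wlsc,prop:sigma-term}); all remaining steps are bookkeeping.
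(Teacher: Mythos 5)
Your proposal is correct and follows essentially the same contradiction argument as the paper: normalization via the positive homogeneity of the curvature functionals, weak compactness, passage to the limit using \cref{prop:sigma-wlsc} to conclude $h=0$, and then \cref{prop:sigma-term} to force the contradiction $\tfrac{\nu}{2}\leq 0$. The only difference is that you spell out the verification that the weak limit $h$ lies in $\mathcal{C}(\mathcal{U}_{ad};\bar u)$ (via the estimate $|\int_\Omega \bar d\, h_n\dx|\leq \tfrac1n\norm{h_n}_{L^1(\Omega)}$), a step the paper declares obvious.
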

\begin{proof}
    Since the inclusion $\mathcal{C}(\mathcal{U}_{ad};\bar u) \subset \mathcal{C}^\tau(\mathcal{U}_{ad};\bar u)$ holds, the inequality \eqref{eq:2nd-OS-suff} is a direct consequence of \eqref{eq:2nd-OS-suff-equiv}. We thus only need to prove the implication ``\eqref{eq:2nd-OS-suff} $\Rightarrow$ \eqref{eq:2nd-OS-suff-equiv}''. To this end, assume that \eqref{eq:2nd-OS-suff-equiv} is not fulfilled. Then, for any $n \in\N$, there exists $v_n \in \mathcal{C}^{1/n}(\mathcal{U}_{ad};\bar u)$ such that
    \begin{equation*}
        Q_s(\bar u, \bar y, \bar \varphi;v_n, v_n)+Q_1(\bar u, \bar y, \bar \varphi;v_n, v_n) + Q_2(\bar u,\bar y,\bar\varphi;v_n) < \frac{1}{n} \norm{v_n}_{L^2(\Omega)}^2.
    \end{equation*}
    Dividing this inequality by $\norm{v_n}^2_{L^2(\Omega)}$, using the positive homogeneity of $(Q_s+Q_1)(\bar u, \bar y, \bar \varphi;\cdot,\cdot)$ as well as \cref{lem:homogeneity}, and setting $h_n := \frac{v_n}{\norm{v_n}_{L^2(\Omega)}}$, we have that $h_n \in \mathcal{C}^{1/n}(\mathcal{U}_{ad};\bar u)$, $\norm{h_n}_{L^2(\Omega)} = 1$ and
    \begin{equation}
        \label{eq:contradiction-equi-thm}
        Q_s(\bar u, \bar y, \bar \varphi;h_n, h_n)+Q_1(\bar u, \bar y, \bar \varphi;h_n, h_n) + Q_2(\bar u,\bar y,\bar\varphi;h_n) < \frac{1}{n}.
    \end{equation}
    Since $\{h_n\}$ is bounded in $L^2(\Omega)$, there exists a subsequence, denoted in the same way, such that $h_n \rightharpoonup h$ in $L^2(\Omega)$ for some $h \in L^2(\Omega)$. Obviously, it holds that $h \in \mathcal{C}(\mathcal{U}_{ad};\bar u)$. 
    The compact embedding $L^2(\Omega) \Subset W^{-1,p}(\Omega)$ for any $p \in (N, 6)$ implies that $h_n \to h$ in $W^{-1,p}(\Omega)$. Thus, we have $S'(\bar u)h_n \to S'(\bar u)h$ in $W^{1,p}_0(\Omega)$ and so in $H^1_0(\Omega) \cap C(\overline\Omega)$. From this, the weak lower semicontinuity of the norm in $L^2(\Omega)$, the estimate \eqref{eq:contradiction-equi-thm}, the definition of $Q_s$ and $Q_1$ and the weak lower semicontinuity of $Q_2(\bar u,\bar y,\bar\varphi;\cdot)$ from \cref{prop:sigma-wlsc} we deduce that
    \begin{equation} \label{eq:contradiction2-thm}
        \begin{aligned}[t]
            (Q_s+Q_1)(\bar u, \bar y, \bar \varphi;h, h) + Q_2(\bar u,\bar y,\bar\varphi;h) 
            &  \leq \liminf_{n \to \infty}(Q_s+Q_1)(\bar u, \bar y, \bar \varphi;h_n, h_n)   + \liminf_{n \to \infty} Q_2(\bar u,\bar y,\bar\varphi;h_n) \\
            & \leq \liminf_{n \to \infty} \left[(Q_s+Q_1)(\bar u, \bar y, \bar \varphi;h_n, h_n) + Q_2(\bar u,\bar y,\bar\varphi;h_n) \right] \\
            &\leq  0. 
        \end{aligned}
    \end{equation}
    From this and \eqref{eq:2nd-OS-suff}, it holds that $h = 0$. Again, we see from the definition of $Q_s$ and $Q_1$ and the fact that $\norm{h_n}_{L^2(\Omega)} = 1$ that
    \begin{multline*} 
        (Q_s+Q_1)(\bar u, \bar y, \bar \varphi;h_n, h_n) = \frac{1}{2}G''(\bar y)(S'(\bar u)h_n )^2 + \frac{\nu}{2}  - \int_\Omega a'(\bar y; S'(\bar u)h_n) \nabla (S'(\bar u)h_n) \cdot  \nabla\bar \varphi  \dx   \\
        - \frac{1}{2} \int_\Omega \1_{\{\bar y\notin E_{a} \}}a''(\bar y)(S'(\bar u)h_n)^2  \nabla \bar \varphi \cdot \nabla \bar y \dx.
    \end{multline*} 
    Combining this with the fact that $\nabla (S'(\bar u)h_n) \to 0$ in $L^p(\Omega)^N$ and $S'(\bar u)h_n \to 0$ in $C(\overline\Omega) \cap H^1_0(\Omega)$, we can conclude from the dominated convergence theorem and \cref{ass:cost_func} that
    \begin{equation*}
        \begin{aligned}
            \frac{\nu}{2} = \lim_{n \to \infty}(Q_s+Q_1)(\bar u, \bar y, \bar \varphi;h_n, h_n) 
            &\leq \limsup_{n \to \infty} \left[ - Q_2(\bar u,\bar y,\bar\varphi;h_n) \right] \\
            &\leq \Sigma(\bar y) \norm{\nabla \bar\varphi}_{L^\infty(\Omega)} \lim_{n \to \infty}\norm{S'(\bar u)h_n}_{L^\infty(\Omega)}^2 = 0,
        \end{aligned}
    \end{equation*} 
    where we have used the limit \eqref{eq:contradiction2-thm} and \cref{prop:sigma-term} to obtain the last two estimates.
    This gives the desired contradiction and completes the proof.
\end{proof}

\section{Conclusions}

We have derived second-order optimality conditions for an optimal control problem governed by a quasilinear elliptic differential equation with a finitely $PC^2$ coefficient. 
Showing that the control-to-state operator is in fact Fréchet differentiable
(but in general not twice differentiable) allows using a second-order Taylor-type expansion to formulate necessary and sufficient conditions in terms of a new curvature functional related to the jump of the first derivatives of the non-smooth coeffficients in critical points. These are no-gap conditions in the sense that the only difference between necessary and sufficient conditions lies in the fact that the inequality in the latter is strict.
Furthermore, an equivalent formulation of the second-order sufficient optimality condition that could be used for discretization error estimates is also derived. Such estimates will be studied in a follow-up work.

\appendix

\section{Regularity of a quasilinear equation on convex domains} \label{app:regu-convex}
\begin{lemma}
    \label{lem:regularity-convex}
    Let \crefrange{ass:domain}{ass:PC1-func} hold and $q > N$. If $y_u \in W^{1,2q}_0(\Omega)$ is the unique solution to
    \begin{equation} \label{eq:state-app}
        \left\{
            \begin{aligned}
                -\div [\left(b + a(y_u) \right) \nabla y_u ]& = u && \text{in } \Omega, \\
                y_u &=0 && \text{on } \partial\Omega,
            \end{aligned}
        \right.
    \end{equation} 
    with $u \in L^q(\Omega)$ and satisfies $\norm{y_u}_{C(\overline\Omega)} \leq M$ for some $M >0$, then $y_u \in H^2(\Omega) \cap W^{1,\infty}(\Omega)$ and
    \begin{equation} \label{eq:apriori-app}
        \norm{y_u}_{H^2(\Omega)} + \norm{y_u}_{W^{1,\infty}(\Omega)} \leq C\left(q,M, \norm{u}_{L^q(\Omega)},\norm{y_u}_{W^{1,2q}_0(\Omega)}\right).
    \end{equation}
\end{lemma}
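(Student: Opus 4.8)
The plan is to reduce \eqref{eq:state-app} to a Poisson equation with an $L^q(\Omega)$ right-hand side and then invoke the $H^2$- and $W^{1,\infty}$-regularity theory for the Laplacian on convex domains that is already used elsewhere in the paper. Throughout, constants may depend on the fixed data $\Omega$, $N$, $\underline b$, $L_b$, and $a$ without further mention.

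First I would set $c := b + a(y_u)$ and record its properties. Since $a$ is $PC^2$ and $\norm{y_u}_{C(\overline\Omega)} \le M$, the restriction $a|_{[-M,M]}$ coincides piecewise with finitely many $C^2$-functions and is therefore Lipschitz with a constant $L_M$ depending only on $M$ and $a$. Combined with \cref{ass:b_func}, this yields $\underline b \le c(x) \le C_M$ for a.e.\ $x \in \Omega$, $c \in W^{1,2q}(\Omega)$ (chain rule), and---using that $\nabla y_u = 0$ a.e.\ on $\{y_u \in E_a\}$, see \cite[Rem.~2.6]{Chipot2009}---the identity $\nabla c = \nabla b + \1_{\{y_u \notin E_a\}} a'(y_u)\nabla y_u$ a.e.; hence $\norm{\nabla c}_{L^{2q}(\Omega)} \le C(M, \norm{y_u}_{W^{1,2q}_0(\Omega)})$. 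In particular $1/c \in W^{1,2q}(\Omega)\cap L^\infty(\Omega)$ with $\nabla(1/c) = -\nabla c/c^2$.

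Next I would show that $y_u$ solves $-\Delta y_u = f$ in $\mathcal D'(\Omega)$ with $f := \tfrac1c\bigl(u + \nabla c\cdot\nabla y_u\bigr)$. Indeed, testing the weak formulation $\int_\Omega c\,\nabla y_u\cdot\nabla w\dx = \int_\Omega u w\dx$ of \eqref{eq:state-app} with $w := v/c$ for $v \in C_c^\infty(\Omega)$---which is admissible since $v/c \in W^{1,2q}_0(\Omega)\hookrightarrow H^1_0(\Omega)$---and inserting $\nabla w = \nabla v/c - v\nabla c/c^2$ gives $\int_\Omega \nabla y_u\cdot\nabla v\dx = \int_\Omega f v\dx$ for all $v \in C_c^\infty(\Omega)$. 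By Hölder's inequality ($\tfrac1{2q} + \tfrac1{2q} = \tfrac1q$) and $c \ge \underline b$,
\[
    \norm{f}_{L^q(\Omega)} \le \tfrac1{\underline b}\bigl( \norm{u}_{L^q(\Omega)} + \norm{\nabla c}_{L^{2q}(\Omega)}\norm{\nabla y_u}_{L^{2q}(\Omega)} \bigr),
\]
which is of the claimed form $C(q, M, \norm{u}_{L^q(\Omega)}, \norm{y_u}_{W^{1,2q}_0(\Omega)})$.

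Finally, since $q > N \ge 2$ and $\Omega$ is bounded, $f \in L^q(\Omega) \subset L^2(\Omega)$; as $\Omega$ is convex and $y_u \in H^1_0(\Omega)$, maximal elliptic regularity (e.g.\ \cite[Thm.~3.2.1.2]{Grisvard1985}) gives $y_u \in H^2(\Omega)$ with $\norm{y_u}_{H^2(\Omega)} \le C\norm{f}_{L^2(\Omega)} \le C\norm{f}_{L^q(\Omega)}$, while the global boundedness of the gradient of solutions to Poisson's equation on convex domains (\cite[Thm.~3.1, Rem.~3]{CianchiMazya2015}, \cite{Mazya2009,YangChen2018}), applicable because $f \in L^q(\Omega)$ with $q > N$, yields $\nabla y_u \in L^\infty(\Omega)$ with $\norm{\nabla y_u}_{L^\infty(\Omega)} \le C\norm{f}_{L^q(\Omega)}$. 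Combining these estimates with $\norm{y_u}_{C(\overline\Omega)} \le M$ and the bound on $\norm{f}_{L^q(\Omega)}$ from the previous step establishes \eqref{eq:apriori-app}. The step requiring the most care is the first one: justifying that $a(y_u) \in W^{1,2q}(\Omega)$ together with the pointwise chain-rule identity for $\nabla(a(y_u))$ even though $a$ is only $PC^2$, which hinges on the finiteness of $E_a$ and on $\nabla y_u$ vanishing a.e.\ on each level set $\{y_u = t_i\}$.
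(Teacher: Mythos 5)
Your proposal is correct and follows essentially the same route as the paper: rewrite \eqref{eq:state-app} as a Poisson equation with the same right-hand side $f=\tfrac{1}{b+a(y_u)}\bigl[u+\nabla b\cdot\nabla y_u+\1_{\{y_u\notin E_a\}}a'(y_u)|\nabla y_u|^2\bigr]\in L^q(\Omega)$ (justified via truncation of $a$, the chain rule, and $\nabla y_u=0$ a.e.\ on $\{y_u\in E_a\}$), then apply \cite[Thm.~3.2.1.2]{Grisvard1985} for $H^2$ and \cite{CianchiMazya2015} for $W^{1,\infty}$. The only (cosmetic) difference is that the paper introduces an auxiliary solution $\tilde y$ of the Poisson problem and identifies $\tilde y=y_u$ by testing with $(b+a(y_u))\phi$, whereas you directly verify that $y_u$ is the variational solution by testing the original weak form with $v/c$ — the same computation run in the opposite direction.
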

\begin{proof}
    Define the function
    \begin{equation*}
        a^M: \R \to \R,\qquad       a^M(t) := 
        \begin{cases}
            a(t) & \text{if } |t| \leq 2M,\\
            a(2M) & \text{if } t>2M,\\
            a(-2M)& \text{if } t< -2M. 
        \end{cases}
    \end{equation*}
    Obviously, $a^M$ is a $PC^1$-function and $\nabla a^M \in L^\infty(\R)^N$.
    By the chain rule (see, e.g., \cite[Thm.~7.8]{Gilbarg_Trudinger}), it holds that
    \begin{equation*}
        \frac{\partial a(y_u)}{\partial x_i} = \frac{\partial a^M(y_u)}{\partial x_i} = \1_{\{y_u \notin E_{a}\} } a'(y_u) \frac{\partial y_u}{\partial x_i}.
    \end{equation*}
    Moreover, by employing \cref{ass:PC1-func}, we deduce that $\left|\1_{\{y_u \notin E_{a}\} } a'(y_u) \right| \leq C_M$ for almost every $x\in\Omega$ and for some constant $C_M>0$.
    Consider now the equation
    \begin{equation} \label{eq:state-convex}
        \left\{
            \begin{aligned}
                -\Delta \tilde y &= \frac{1}{b+ a(y_u)} \left[ u + \nabla b \cdot \nabla y_u + \1_{\{y_u \notin E_{a} \}} a'(y_u) |\nabla y_u |^2 \right] && \text{in } \Omega, \\
                \tilde y &=0 && \text{on } \partial\Omega.
            \end{aligned}
        \right.
    \end{equation}
    Since the right-hand side of \eqref{eq:state-convex} belongs to $L^q(\Omega)$ for $q >N \geq 2$, it holds that $\tilde{y} \in H^2(\Omega)$ according to \cite[Thm.~3.2.1.2]{Grisvard1985}. Furthermore, we have from the fact that $b(x) \geq \underline{b} >0$ for all $x\in \overline\Omega$ and the non-negativity of $a$ that
    \begin{align*}
        \norm{\Delta \tilde{y}}_{L^q(\Omega)} & \leq \frac{1}{\underline{b}} \left[  \norm{u}_{L^q(\Omega)} + \norm{\nabla b \cdot \nabla y_u}_{L^q(\Omega)} + \norm{\1_{\{y_u \notin E_{a} \}} a'(y_u) |\nabla y_u |^2}_{L^q(\Omega)} \right] \\
        & \leq \frac{1}{\underline{b}}\left[  \norm{u}_{L^q(\Omega)} + L_{b}\norm{\nabla y_u}_{L^q(\Omega)} + C_M \norm{\nabla y_u }_{L^{2q}(\Omega)}^2 \right].
    \end{align*}
    From this, \cref{ass:domain}, and the global boundedness of the gradient of solutions to Poisson's equation (see, e.g. \cite[Thm.~3.1, Rem.~3]{CianchiMazya2015}, \cite{Mazya2009,YangChen2018}), it follows that $\tilde{y} \in W^{1,\infty}(\Omega)$ and 
    \begin{equation*}
        \norm{\nabla \tilde{y}}_{L^{\infty}(\Omega)} \leq C\norm{\Delta \tilde{y}}_{L^q(\Omega)}.
    \end{equation*}
    It is therefore sufficient to prove that $\tilde{y} = y_u$. To this end, taking any $\phi \in H^1_0(\Omega)$ yields that $(b + a(y_u))\phi \in H^1_0(\Omega)$. Testing \eqref{eq:state-convex} by $(b + a(y_u))\phi$, a straightforward computation shows that
    \begin{equation*}
        \int_\Omega \left( \nabla \tilde{y} - \nabla y_u\right) \cdot \nabla \left(b + a(y_u)  \right) \phi + (b + a(y_u)) \nabla \phi \cdot \nabla \tilde{y} \dx = \int_\Omega u \phi \dx.
    \end{equation*}
    Now testing \eqref{eq:state-app} by $\phi$ and then subtracting the obtained identity from the above equality, we obtain
    \begin{equation*}
        \int_\Omega \left( \nabla \tilde{y} - \nabla y_u\right) \cdot \nabla \left[(b + a(y_u)) \phi \right] \dx = 0.
    \end{equation*}
    Choosing $\phi := \frac{1}{b + a(y_u)}(\tilde{y}- y_u) \in H^1_0(\Omega)$ then yields $\tilde{y} = y_u$.
\end{proof}

\section{Estimate of jump functional}\label{app:examples}

In this appendix, we verify the estimates for $\Sigma(y)$ in \cref{exam:Ey-ocsillation}.

We can assume without loss of generality that $E_{a} = \{ t \}$, i.e., $\barK= 1$ and $t_1 = t$. We first consider the right end point $\overline x_j$ of $[\underline{x}_j,\overline x_j]$. If $\overline x_j = \beta_0$, then the set $\{0 < | y - t | < r\} \cap (\overline x_j,\infty)$ is empty for any $r>0$, and thus
\begin{equation*}
    \lim_{r \to 0^+} \frac{1}{r} \int_\Omega \1_{\{0 < | y - t | < r\} \cap (\overline x_j,\infty)} \left| \frac{\partial y}{\partial x}\right| \dx = 0.
\end{equation*}
If $\overline x_j < \beta_0$, then there exists an $\bar \epsilon_j>0$ such that $y$ is monotone on $(\overline x_j, \overline x_j +\bar  \epsilon_j) \subset \Omega$ due the monotonicity assumption on $y$. Since $\overline x_j$ is the right end point of $[\underline{x}_j,\overline x_j]$, the sets $[\underline{x}_k,\overline x_k]$, $k \in J$, are mutually disjoint, and $y$ is continuous, we have $y(\overline x_j + \bar \epsilon_j) \neq y(\overline x_j) = t$.  
Setting $\bar r_j := |y(\overline x_j + \bar\epsilon_j) - y(\overline x_j)| > 0$, we derive from the monotonicity of $y$ on $(\overline x_j, \overline x_j +\bar \epsilon_j)$ that for all $r \in (0, \bar r_j)$,
\begin{equation*}
    \{0 < | y - t | < r\} \cap ( \overline x_j, \overline x_j + \bar \epsilon_j) = 
    \begin{cases}
        \left(\overline x_j, y^{-1}(t + r)\right) & \text{if $y$ is monotone increasing on $\left(\overline x_j, \overline x_j +\bar  \epsilon_j\right)$},\\
        \left(\overline x_j, y^{-1}(t - r)\right) & \text{if $y$ is monotone decreasing on $\left(\overline x_j, \overline x_j +\bar  \epsilon_j\right)$}.
    \end{cases}   
\end{equation*}
This implies that
\begin{equation*}
    \lim_{r \to 0^+}\frac{1}{r} \int_\Omega \1_{ \{ 0 < | y - t | < r\}\cap  ( \overline x_j, \overline x_j +\bar \epsilon_j)}\left| \frac{\partial y}{\partial x}\right| \dx  = 1.
\end{equation*}
Similarly, we have for the left end point $\underline{x}_j$ that there exists an $\underline \epsilon_j>0$ such that
\begin{equation*}
    \lim_{r \to 0^+}\frac{1}{r} \int_\Omega \1_{ \{ 0 < | y - t | < r\} \cap (  \underline x_j- \underline \epsilon_j, \underline x_j)} \left| \frac{\partial y}{\partial x}\right| \dx   = 
    \begin{cases}
        0 & \text{if } \underline{x}_j = \alpha_0,\\
        1 & \text{if } \underline{x}_j > \alpha_0.
    \end{cases}
\end{equation*}
Furthermore, if $\underline{x}_j > \alpha_0$, then $y$ is monotone on $(\underline x_j- \underline \epsilon_j, \underline x_j ) \subset \Omega$ and
\begin{equation*}
    \{0 < | y - t | < r\} \cap \left( \underline x_j - \underline \epsilon_j, \underline x_j\right) = 
    \begin{cases}
        \left(y^{-1}(t - r), \underline x_j\right) & \text{if $y$ is monotone increasing on $\left(\underline x_j - \underline  \epsilon_j, \underline x_j\right)$},\\
        \left(y^{-1}(t + r), \underline x_j\right) & \text{if $y$ is monotone decreasing on  $\left(\underline x_j - \underline  \epsilon_j, \underline x_j\right)$},
    \end{cases}   
\end{equation*}
for all $r \in (0, \underline r_j)$ with $\underline r_j := |y(\underline x_j - \underline \epsilon_j) - y(\underline x_j)| > 0$. Let us set $\epsilon_j := \min \{\underline \epsilon_j, \bar \epsilon_j \}$ and $r_j := \min \{\underline r_j, \bar r_j \}$. If $y$ is monotone increasing on $\left(\overline x_j, \overline x_j +\bar  \epsilon_j\right)$ and on $\left(\underline x_j - \underline  \epsilon_j, \underline x_j\right)$, then
\begin{equation*}
    \{0 < | y - t | < r\} \cap \left( \underline x_j - \epsilon_j, \overline x_j +\epsilon_j\right) = \left(\overline x_j, y^{-1}(t + r)\right) \cup \left(y^{-1}(t - r), \underline x_j\right)
\end{equation*}
for all $r \in (0, r_j)$.
Similar expressions hold for the other cases of monotonicity on each interval.
We hence obtain
\begin{equation*}
    \lim_{r \to 0^+}\frac{1}{r} \int_\Omega \1_{ \{ 0 < | y - t | < r\} \cap ( \underline x_j - \epsilon_j, \overline x_j +\epsilon_j)} \left| \frac{\partial y}{\partial x}\right| \dx  = 
    \begin{cases}
        0 & \text{if }\underline{x}_j, \overline x_j\in \{\alpha_0,\beta_0\},\\
        1 & \text{if }\underline{x}_j\in\{\alpha_0,\beta_0\}, \overline x_j\notin \{\alpha_0,\beta_0\} \text{ or vice versa},\\
        2 & \text{if }\underline{x}_j, \overline x_j\notin\{\alpha_0,\beta_0\}.
    \end{cases}
\end{equation*}
A standard argument then yields the desired conclusions.

\bibliographystyle{jnsao}
\bibliography{nsqlsoc}

\end{document}